\documentclass[11pt]{article}

\usepackage[a4paper,margin=1in]{geometry}
\usepackage{amsmath,amssymb,amsthm,mathtools}
\usepackage{mathrsfs}
\usepackage{xcolor}
\usepackage{hyperref}
\usepackage[nameinlink,capitalize]{cleveref}
\usepackage[titletoc,title]{appendix}
\usepackage{enumerate}
\usepackage{authblk}
\usepackage{booktabs}

\hypersetup{
  colorlinks=true,
  linkcolor=blue,
  citecolor=blue,
  urlcolor=blue
}
\usepackage{tikz}
\usetikzlibrary{arrows.meta,calc,decorations.pathreplacing}

\numberwithin{equation}{section}
\numberwithin{figure}{section}
\numberwithin{table}{section}

\newtheorem{theorem}{Theorem}[section]
\newtheorem{proposition}[theorem]{Proposition}
\newtheorem{lemma}[theorem]{Lemma}

\newtheorem{assumption}[theorem]{Assumption}
\newtheorem{remark}[theorem]{Remark}

\newcommand{\R}{\mathbb R}
\newcommand{\C}{\mathbb C}
\newcommand{\eps}{\varepsilon}
\newcommand{\dd}{\,\mathrm d}

\newcommand{\one}{\mathbf 1}
\newcommand{\dist}{\operatorname{dist}}
\newcommand{\loc}{\operatorname{loc}}
\newcommand{\Span}{\operatorname{span}}

\newcommand{\Ker}{\operatorname{Ker}}
\newcommand{\Newt}{\mathrm{Newt}}
\newcommand{\icap}{\operatorname{Cap}}

\title{Interface-Shifted Minnaert Resonances}
\author[1]{Huaian Diao\thanks{E-mail: \texttt{diao@jlu.edu.cn}}}
\author[2]{Long Li\thanks{E-mail: \texttt{long.li@ricam.oeaw.ac.at}}}
\author[2]{Mourad Sini\thanks{E-mail: \texttt{mourad.sini@oeaw.ac.at}}}
\affil[1]{School of Mathematics, Jilin University, Changchun 130012, China}
\affil[2]{RICAM, Austrian Academy of Sciences, Altenberger Stra\ss e 69, 4040 Linz, Austria}

\date{}

\begin{document}

\maketitle

\begin{abstract}
\noindent We develop a unified asymptotic theory for Minnaert resonances
generated by a small bubble \(D_\varepsilon\) placed near an
interface \(\Gamma\) of material discontinuity in an acoustic
background. The bubble has characteristic size \(\varepsilon>0\),
with its density and bulk modulus depending on the same small
parameter through the Minnaert high-contrast scaling.
The analysis is organized by the scaled separation
\[
\Theta_\varepsilon
:=
\frac{\operatorname{dist}(D_\varepsilon,\Gamma)}{\varepsilon},
\]
and covers the three natural regimes
\[
\Theta_\varepsilon\to+\infty,
\qquad
\Theta_\varepsilon\to\Theta_*\in(0,\infty),
\qquad
\Theta_\varepsilon\to0:
\]
\begin{enumerate}
\item In the far-separation regime, the interface is invisible at leading order and the classical full-space Minnaert law is recovered. 

\item In the critical regime, the rescaled geometry retains both the bubble and the interface at finite separation, and the Newtonian capacitance is replaced by a flat-interface capacitance determined by the limiting tangent transmission problem. 

\item In the near-contact regime, this family converges to a contact-limiting capacitance, yielding the same leading-order
Minnaert resonance for all gap scalings satisfying \(\operatorname{dist}(D_\varepsilon,\Gamma) = o(\varepsilon)\).
\end{enumerate}

\noindent These three regimes provide a unified description of how the resonance frequency changes as the bubble approaches the interface, from the classical isolated-bubble limit to the contact limit. The analysis relies on a uniform reduction of the resonance problem to a scalar equation and on a variational characterization of the interface-dependent capacitances; in particular, Mosco convergence of the associated transmission energies yields convergence of the capacitary potentials and minimum energies up to the singular contact configuration.
\end{abstract}

\textbf{Keywords}:  Minnaert resonances; interface of material discontinuity; uniform asymptotics; near-contact; flat-interface capacitance.

\section{Introduction}

The Minnaert resonance is the classical mechanism through which a gas bubble whose diameter is much smaller than the acoustic wavelength can nevertheless generate a large scattered field.  Since Minnaert's original physical description \cite{Minnaert1933}, this phenomenon has become a central example of subwavelength resonance.  Its mathematical analysis has been developed through layer-potential methods, resonant asymptotics, pole characterizations and, more recently, uniform resolvent estimates; see for instance \cite{AZ-18, AmmariKangLee2009,DabrowskiGhandricheSini2021, MPS,LiSini2026}. These works, show that in a homogeneous or locally smooth background the leading Minnaert frequency is determined by the bubble contrast and by a static capacitance of the reference shape.

The common structural assumption behind the classical formula is that the background is smooth at the scale of the bubble.  After rescaling around the bubble, the background Green function then converges to the free-space Newtonian kernel, up to the local value of the material coefficients.  This explains why the usual capacitance is the leading object in homogeneous media, smoothly varying media, and smooth-background imaging models using bubbles as contrast agents \cite{DabrowskiGhandricheSini2021}.  It also explains the form of the point-scatterer limits and resolvent expansions obtained in the homogeneous case \cite{LiSini2026}.

The situation changes qualitatively when the bubble approaches a
material interface. In this case, the local free-space approximation
underlying the classical Minnaert law is no longer uniform with
respect to the bubble--interface distance. When this distance becomes
comparable with the bubble size, the interface remains visible after
rescaling, and the leading resonance is no longer governed solely by
the classical Newtonian capacitance. Instead, a two-phase transmission
capacitance enters the leading-order law.

The influence of nearby interfaces on the oscillation and resonance frequencies of gas bubbles has been investigated in the physical-acoustics literature, mainly for spherical bubbles and special geometries such as planar fluid--fluid interfaces, rigid boundaries, or layered media; see, for instance, \cite{DoinikovAiredBouakaz2011,MaksimovPolovinka2017,MaksimovPolovinka2018,Maksimov2018}. These works rely on explicit coordinate representations, symmetry arguments, or formal acoustic approximations and predict a distance-dependent modification of the classical Minnaert frequency. Such interface-induced resonance shifts have also recently been observed experimentally at the single-bubble level \cite{BouchetEtAl2024}. To the best of our knowledge, however, a rigorous scattering-resonance analysis for a small bubble of general shape embedded in a discontinuous acoustic background, uniformly with respect to the bubble--interface distance and including the near-contact regime, has not previously been available. The purpose of the present work is to provide such an analysis and to show that the regimes of large, finite, and vanishing scaled distance are governed by a single family of interface-dependent capacitances.

To describe this transition, we introduce the scaled separation
\[
\Theta_\varepsilon
:=
\frac{\operatorname{dist}(D_\varepsilon,\Gamma)}{\varepsilon},
\]
where \(D_\varepsilon\) denotes a bubble of characteristic size \(\eps\) the material
interface.
There are three natural asymptotic regimes:
\[
\Theta_\varepsilon\to+\infty,
\qquad
\Theta_\varepsilon\to\Theta_*\in(0,\infty),
\qquad
\Theta_\varepsilon\to0.
\]
They correspond respectively to far separation, finite scaled
separation, and near contact. The main purpose of this paper is to
treat these three cases within a single asymptotic framework, without
requiring the scaled distance to remain bounded away from zero.

Our main result Theorem \ref{th:main} shows that, for all sufficiently small
\(\varepsilon\), there exist exactly two Minnaert resonances
\(z_\varepsilon^\pm\). They satisfy the uniform expansion
\[
z_\varepsilon^\pm
=
\pm
\left(
\frac{\bar k_b}{|B|}
\frac{\icap^{\mathrm{phys}}(D_\varepsilon)}
{\varepsilon}
\right)^{1/2}
+
O(\varepsilon),
\]
where the remainder is uniform with respect to the separation regimes
considered here. Thus the geometry enters the leading resonance law
through the normalized physical capacitance
\[
\frac{\icap^{\mathrm{phys}}(D_\varepsilon)}
{\varepsilon}.
\]

The asymptotic behavior of this normalized capacitance is described by
a one-parameter family of flat-interface capacitances
\[
\icap^{\mathrm{flat}}_{\Theta,\nu}(B),
\qquad
\Theta\in[0,\infty].
\]
If \(\Theta_\varepsilon\to+\infty\), the interface disappears from the
blow-up limit and the classical homogeneous-phase Minnaert law is
recovered. If
\(\Theta_\varepsilon\to\Theta_*\in(0,\infty)\), the rescaled bubble
remains at a finite distance from the limiting tangent interface, and the leading-order resonance is determined by the corresponding flat-interface capacitance.
If \(\Theta_\varepsilon\to0\), the rescaled gap vanishes and the flat comparison problem approaches its contact limit. The leading-order resonance therefore converges to that
determined by the contact-limit capacitance. See Figure \ref{fig:flat-comparison-geometry} for the associated flat comparison geometry.

The proof builds on a variational approach previously used to
characterize Minnaert-type subwavelength resonances in
time-modulated media~\cite{FH-04} and subsequently applied to
Fabry--P\'erot resonances~\cite{LiSinifabry}.
Projection onto the zero Neumann mode and its orthogonal
complement leads to a scalar equation for the Minnaert
resonances.
The main analytical difficulty here is to control this
reduction as the bubble shrinks and approaches the transmission
interface, without imposing a positive lower bound on the
scaled distance $\Theta_\eps$.
We establish uniform solvability estimates for the exterior
problem on compact spectral sets avoiding the resonances of
the transmission problem without the bubble.
These estimates provide uniform control of the remainder in
the scalar reduction, yielding exactly two Minnaert resonances, with leading terms determined
by the normalized physical capacitance.

Identifying this leading term further requires passing to a
limiting transmission geometry in which the interface may
touch the reference bubble.
After rescaling to a fixed reference domain, we encode the
varying interface in the material coefficients and verify
their almost-everywhere convergence.
Mosco’s stability theorem then yields strong convergence of the capacitary potentials and hence convergence of the associated minimum energies, including in the contact configuration.

The paper is organized as follows. Section \ref{sec:m} introduces the geometric
and material setting, defines the physical and flat-interface
capacitances, and states the main results. Section \ref{sec:Ch} develops the
characterization of scattering resonances in the physical and rescaled
domains and identifies the Minnaert resonances. Section \ref{sec:p} is devoted
to the proof of the main theorem. The monotonicity and boundary integral formulation of the flat capacitance, the technical uniform solvability, and
capacity-continuity arguments are collected in the appendices.

\section{Setting and main results} \label{sec:m}

\subsection{Geometric and material setting}
Let $\Omega\subset\R^3$ be a bounded, and connected open set with $C^2$ boundary and connected exterior, and set
\[
\Gamma:=\partial\Omega,
\qquad
\Omega_+:=\Omega,
\qquad
\Omega_-:=\R^3\setminus\overline\Omega.
\]
Let \(\Gamma\subset\mathbb R^3\) be a \(C^2\) interface separating
\(\Omega_-\) and \(\Omega_+\), and let \(\nu_\Gamma\) denote the unit
normal pointing from \(\Omega_+\) into \(\Omega_-\), and define the signed-distance function \(s_{\Gamma}: \mathbb R^3 \to \mathbb R\) by
\[
s_{\Gamma}(x)
:=
\begin{cases}
 \operatorname{dist}(x,\Gamma),
 & x\in\Omega_+,
 \\[1mm]
 0,
 & x\in\Gamma,
 \\[1mm]
 -\operatorname{dist}(x,\Gamma),
 & x\in\Omega_-.
\end{cases}
\]
Since \(\Gamma\) is of class \(C^2\), there exists \(r_\Gamma>0\) such
that \(s_\Gamma\in C^2(U_{r_\Gamma})\), where
\begin{align}
U_{r_\Gamma}
:=
\bigl\{x\in\mathbb R^3:
\operatorname{dist}(x,\Gamma)<r_\Gamma\bigr\}. \notag
\end{align}
Moreover, every \(x\in U_{r_\Gamma}\) has a unique nearest point
\(\pi_\Gamma(x)\in\Gamma\), and admits the below representation
\begin{align} \label{eq:2}
x
=
\pi_\Gamma(x)
-
s_\Gamma(x)\nu_\Gamma\bigl(\pi_\Gamma(x)\bigr),
\end{align}
see \cite[Lemma~14.16 and its proof]{GT_11}.

Having described the background interface, we now introduce the family of shrinking inclusions. Let \(B\subset\mathbb R^3\) be a bounded connected open set containing the origin, with a
\(C^{2}\) boundary and a connected
exterior \(\mathbb R^3\backslash \overline B\). For each
\(\varepsilon>0\), let
\[
D_\varepsilon=x_\varepsilon + \varepsilon B,
\qquad
\overline{D_\varepsilon}\subset\Omega_+,
\]
where \(x_\varepsilon\) specifies the position of the inclusion, 
dependent on \(\varepsilon\). To describe the position of \(D_\varepsilon\) relative to
\(\Gamma\), set
\[
d_\varepsilon
:=
\operatorname{dist}(D_\varepsilon,\Gamma).
\]
Since \(\partial D_\varepsilon\) and \(\Gamma\) are compact, there exist \(e_\varepsilon\in\partial D_\varepsilon\) and
\(p_\varepsilon\in\Gamma\) such that
\begin{align}\label{eq:mini}
|e_\varepsilon-p_\varepsilon|=d_\varepsilon.
\end{align}
We then define
\begin{align} \label{eq:scaled}
\xi_\varepsilon:=\frac{e_\varepsilon-x_\varepsilon}{\varepsilon}
\in\partial B,
\end{align}
so that
\[
e_\varepsilon=x_\varepsilon+\varepsilon\xi_\varepsilon,
\]
and define
\begin{align} \label{eq:21}
\Theta_\eps:=\frac{d_\varepsilon}{\varepsilon}.
\end{align}

\begin{figure}[htbp]
\centering
\begin{tikzpicture}[
    x=1cm,
    y=1cm,
    >=Latex,
    line cap=round,
    line join=round,
    every node/.style={font=\small}
]

\filldraw[
    fill=blue!8,
    draw=black,
    line width=1.0pt
]
(0,0) ellipse [x radius=4.4, y radius=2.7];

\node at (-1.55,0.35) {$\Omega_+=\Omega$};
\node at (3.65,-3.25) {$\Omega_-$};
\node[anchor=west] at (4.02,-0.95)
{$\Gamma=\partial\Omega$};

\coordinate (xc) at (0,-1.42);
\def\r{0.62}

\filldraw[
    fill=green!14,
    draw=black,
    line width=0.95pt
]
(xc) circle [radius=\r];

\fill (xc) circle (1.25pt);
\node[above right=1pt] at (xc) {$x_\varepsilon$};

\node[right=7pt] at ($(xc)+(\r,0)$)
{$D_\varepsilon$};

\coordinate (p) at (0,-2.7);

\coordinate (y) at (0,-2.04);

\fill (p) circle (1.35pt);
\fill (y) circle (1.35pt);

\node[below left=1pt] at (p)
{$p_\varepsilon$};

\node[right=5pt] at (y)
{$x_\varepsilon+\varepsilon\xi_\varepsilon$};

\draw[
    densely dashed,
    line width=0.8pt
]
(y) -- (p);

\node[left=5pt] at ($(y)!0.50!(p)$)
{$d_\varepsilon$};

\draw[
    -{Latex[length=2.5mm]},
    line width=0.95pt
]
(p) -- ++(0,-0.85);

\node[right=4pt] at ($(p)+(0,-0.52)$)
{$\nu_\Gamma(p_\varepsilon)$};

\end{tikzpicture}

\caption{
Near-contact geometry in the physical variables.}
\label{fig:physical-near-contact}
\end{figure}
The geometric configuration of \(D_\varepsilon\) relative to the interface \(\Gamma\), including the near-contact case, is illustrated in Figure \ref{fig:physical-near-contact}.

\paragraph{Geometric assumption} We distinguish the following two geometric regimes.
\begin{assumption}\label{as_1}
\begin{enumerate}
\item[(i)]  In the finite scaled-distance regime, we assume that there exists
\(p_*\in\Gamma\) such that
\begin{align} \label{eq:asump_1}
\lim_{\eps \rightarrow 0} x_\varepsilon = p_*,
\qquad
\lim_{\eps \rightarrow 0} \Theta_\eps
=
\Theta_*\in[0,\infty).
\end{align}
Set
\[
\nu_*:=\nu_\Gamma(p_*).
\]
We further assume that \(B\) has a unique supporting point in the
direction \(\nu_*\), namely, that there exists a unique
\(\xi_*\in\partial B\) such that
\[
\xi_*
=
\operatorname*{arg\,max}_{\xi\in\overline B}
\nu_*\cdot\xi.
\]

\item[(ii)] In the separated regime,
\[
\lim_{\eps \rightarrow 0} \Theta_\eps
= +\infty.
\]
\end{enumerate}
\end{assumption}
The background density and bulk modulus are piecewise constant across
$\Gamma$:
\[
\rho^0(x)
=
\begin{cases}
\rho_+, & x\in\Omega_+,\\[1mm]
\rho_-, & x\in\Omega_-,
\end{cases}
\qquad
k^0(x)
=
\begin{cases}
k_+, & x\in\Omega_+,\\[1mm]
k_-, & x\in\Omega_-,
\end{cases}
\]
where
\(
\rho_\pm>0\) and \(k_\pm>0\)
are constants.
Inside the bubble, we impose the Minnaert scaling
\[
\rho_b^\eps:=\bar\rho_b\eps^2,
\qquad
k_b^\eps:=\bar k_b\eps^2,
\qquad
\bar\rho_b,\;\bar k_b>0.
\]
The full coefficients are therefore
\[
\rho_\eps(x)
=
\begin{cases}
\rho_b^\eps,
& x\in D_\eps,\\[1mm]
\rho_+,
& x\in\Omega_+\setminus\overline{D_\eps},\\[1mm]
\rho_-,
& x\in\Omega_-,
\end{cases}
\qquad
k_\eps(x)
=
\begin{cases}
k_b^\eps,
& x\in D_\eps,\\[1mm]
k_+,
& x\in\Omega_+\setminus\overline{D_\eps},\\[1mm]
k_-,
& x\in\Omega_-.
\end{cases}
\]
The $\eps^2$ scaling of the mass density or the bulk modulus mentioned above ensures that the generated subwavelength resonance is of order $O(1)$, a key feature in metamaterial applications, as it enables the manipulation of moderate frequencies to design effective dispersive media. We refer to \cite{AFZ-17, AZ-17,C-M-P-T-1} for the time harmonic case and to \cite{C-M-P-T-2,MS-241, MS-242} for the time domain case.

\paragraph{High-contrast Hamiltonian} We consider the associated Hamiltonian $H_{\rho_\eps, k_\eps}$ defined by 
\begin{align}
\left(H_{\rho_\eps, k_\eps}\right) u := -{k_\eps}\nabla \cdot \frac{1}{\rho_\eps} \nabla u \notag
\end{align}
with the domain 
\begin{align}
{\mathcal D}(H_{\rho_\eps,k_\eps}):= \bigg\{u \in H^1(\R^3),\; k_\eps \nabla\cdot \frac{1}{\rho_\eps}\nabla u \in L^2(\R^3) \bigg\}. \notag
\end{align}
It falls within the class of black-box Hamiltonians \cite{LSW}. Choose \(\chi\in C_c^\infty(\mathbb R^3)\) equal to one
on a neighbourhood of \(\Omega_+\). For each fixed \(\varepsilon>0\),
the cut-off outgoing resolvent
\[
    \chi\bigl(H_{\rho_\varepsilon,k_\varepsilon}-z^2\bigr)^{-1}\chi,
    \qquad \operatorname{Im}z>0,
\]
extends meromorphically to \(z\in\mathbb C\) as an
\(L^2(\mathbb R^3)\)-operator-valued function; see \cite{DM}. Its poles, which are independent of the choice of such a cut-off, are called the scattering
resonances of \(H_{\rho_\varepsilon,k_\varepsilon}\).

Let \(H_{\mathrm{bg}}\) denote the background transmission
Hamiltonian obtained by restoring the background
coefficients \(\rho^0\) and \(k^0\) in \(D_\varepsilon\).
Throughout the paper, we set
\[
    \mathcal O:=\mathbb C\setminus\mathcal O_{\mathrm{bg}}.
\]
Here, \(\mathcal O_{\mathrm{bg}}\) denotes the set of scattering
resonances of \(H_{\mathrm{bg}}\) in the complex
variable \(z\), defined as the poles of the meromorphically
continued cut-off outgoing resolvent associated with
\((H_{\mathrm{bg}}-z^2)^{-1}\). We refer to \cite{J19, PV-99, P-V:1919} for results on the distribution of these resonances.

\subsection{Flat comparison geometry and capacitances}

Before stating the result, we introduce the two capacitance quantities
that enter the asymptotic formulas: the physical capacitance of
\(D_\varepsilon\) in the original transmission medium and its
flat-interface counterpart associated with the reference domain \(B\).

Set
\[
D^{1,2}(\mathbb R^3)
:=
\left\{
v\in L^6(\mathbb R^3):
\nabla v\in L^2(\mathbb R^3)
\right\},
\]
and, for any bounded Lipschitz domain \(E\subset\mathbb R^3\), define
\[
\mathcal A_E
:=
\left\{
v\in D^{1,2}(\mathbb R^3):
v=1 \quad\text{a.e. in }E
\right\}.
\]
We first define the physical capacitance of \(D_\varepsilon\) by
\begin{align}\label{eq:phy}
\icap^{\mathrm{phys}}(D_\varepsilon)
:=
\inf_{v\in\mathcal A_{D_\varepsilon}}
\int_{\mathbb R^3\setminus\overline{D_\varepsilon}}
\left(\rho_+^{-1}\mathbf 1_{\Omega_+}(x)
+
\rho_-^{-1}\mathbf 1_{\Omega_-}(x)\right)|\nabla v(x)|^2\,\dd x.
\end{align}
This definition is the weighted whole-space analogue of the
classical variational definition of Newtonian capacity; see,
e.g., \cite{DePhilippisMariniMukoseeva2021} for the unweighted case.

The quantity entering the leading-order Minnaert law is its normalized
version
\[
\mathfrak C_\varepsilon
:=
\frac{1}{\varepsilon}
\icap^{\mathrm{phys}}(D_\varepsilon).
\]
Its limiting behavior is described by the flat-interface capacitance
associated with the flat
comparison interface \(\Pi_{\Theta,\nu}^{B}\). Here
\begin{align}\label{eq:interface}
\Pi_{\Theta,\nu}^{B}
:=
\left\{
{\widetilde x}\in\mathbb R^3:
\nu\cdot {\widetilde x}=\ell_B(\nu)+\Theta
\right\}, \qquad 0\leq\Theta<\infty,
\end{align}
where 
\[
\ell_B(\nu)
:=
\max_{{\widetilde x}\in\overline B}\nu\cdot {\widetilde x}, \quad \nu \in \mathbb S^2.
\]
For
\(\Theta\in[0,\infty)\), define
\begin{align}
\icap^{\mathrm{flat}}_{\Theta,\nu}(B)
:=
\inf_{v\in\mathcal A_B}
\int_{\mathbb R^3\setminus\overline B}
a_{\Theta,\nu}^{\mathrm{flat}}({\widetilde x})
|\nabla v({\widetilde x})|^2\,d{\widetilde x}. \notag
\end{align}
Here,
\begin{align}\label{eq:flat_1}
a_{\Theta,\nu}^{\mathrm{flat}}({\widetilde x})
:=
\rho^{-1}_+
\mathbf 1_{\{\nu\cdot {\widetilde x}<\ell_B(\nu)+\Theta\}}({\widetilde x})
+
\rho^{-1}_{-}
\mathbf 1_{\{\nu\cdot {\widetilde x}>\ell_B(\nu)+\Theta\}}({\widetilde x}), \quad \Theta\in[0,\infty].
\end{align}
Here, we refer to \(\icap^{\mathrm{flat}}_{0,\nu}(B)\)
as the contact-limit capacitance, in view of the continuity of the
flat-interface capacitance at \(\Theta=0\); see the final part of the
proof of Proposition \ref{pr:fc}.

At the endpoint \(\Theta=\infty\), the comparison interface leaves
every fixed bounded subset of the rescaled space. Accordingly, set
\begin{align}\label{eq:flat_2}
&a_{\infty,\nu}^{\mathrm{flat}}({\widetilde x})
:=
\rho^{-1}_+,
\qquad
{\widetilde x}\in\mathbb R^3,
\end{align}
and hence
\[
\icap^{\mathrm{flat}}_{\infty,\nu}(B)
=
\rho^{-1}_+\icap_{\Newt}(B),
\]
where
\begin{align}
\icap_{\Newt}(B)
:=
\inf_{v\in\mathcal A_B}
\int_{\mathbb R^3\setminus\overline B}
|\nabla v({\widetilde x})|^2\,d{\widetilde x}. \notag
\end{align}
Although the endpoint coefficient is independent of \(\nu\), we retain
\(\nu\) in the notation for uniformity.

The flat-interface capacitance mentioned above also admits the following boundary-integral representation:
\begin{align}\label{eq:flat-capacity-layer}
\icap^{\mathrm{flat}}_{\Theta,\nu}(B)
    =
    \left\langle 1,
        \mathcal S_{\Theta,\nu}^{-1}1
    \right\rangle_{H^{-1/2}(\partial B),H^{1/2}(\partial B)}.
\end{align}
Here, for \(\Theta \in [0,+\infty)\), the operator \(  \mathcal S_{\Theta,\nu}:
    H^{-1/2}(\partial B)\longrightarrow H^{1/2}(\partial B)\) is defined by
\begin{align*}
 (\mathcal S_{\Theta,\nu}\varphi)({\widetilde x})
    :=
    \frac{\rho_+}{4\pi}
    \int_{\partial B}
    \left(
        \frac{1}{|{\widetilde x}-{\widetilde y}|}
        + \frac{1/\rho_+ -  1/\rho_-}{1/\rho_+ + 1/\rho_-}\frac 1{|R_\Theta{\widetilde x}- {\widetilde y}|}
    \right)
    \varphi({\widetilde y})\,dS({\widetilde y}),
    \qquad {\widetilde x}\in\partial B,
\end{align*}
where 
\begin{align}\label{eq:reflect}
    R_\Theta {\widetilde y}
    :=
    {\widetilde y}-2\bigl(\nu\cdot {\widetilde y} - \ell_B(\nu)-\Theta\bigr)\nu
\end{align}
is the reflection of \({\widetilde y} \in \R^3\) across the limiting flat interface \(\Pi^B_{\Theta,\nu}\).
Moreover, as a function of \(\Theta\), the flat capacitance is nondecreasing if \(\rho_+ < \rho_-\), nonincreasing if \(\rho_+> \rho_-\), and constant if \(\rho_+=\rho_-\). The proof of these monotonicity properties, together with a boundary-integral representation of \(\icap^{\mathrm{flat}}_{\Theta,\nu}(B)\), is deferred to Appendix~\ref{app:flat-capacitance}.

\subsection{Main theorem}

We can now state the main theorem.

\begin{theorem}\label{th:main}
Let \(\eps > 0\) and let \(K\subset \mathcal O\) be compact. Assume that Assumption \ref{as_1} holds. There exists $\eps_K$ such that for $\eps \in (0,\eps_K)$, the Hamiltonian $H_{\rho_\eps, k_\eps}$
admits at most two Minnaert resonances \(z_\varepsilon^\pm\) in \(K\), satisfying
\begin{align}\label{eq:main_1}
z_\varepsilon^\pm
=
\pm
\left(
\frac{\bar k_b}{|B|}
\mathfrak C_\varepsilon
\right)^{1/2}
+
z_{\varepsilon,\mathrm{rem}}^\pm ,
\end{align}
where
\begin{align}\label{eq:main_2}
\left|
z_{\varepsilon,\mathrm{rem}}^\pm
\right|
\leq C\varepsilon .
\end{align}
The constant \(C\) is independent of \(\eps\) and \(\operatorname{dist}(D_\varepsilon,\Gamma)\).

Moreover, the normalized physical capacitance $\mathfrak C_\eps$ admits the following limiting descriptions.

\begin{enumerate}[(a)]
\item If
\(
\lim_{\eps \rightarrow 0}\Theta_\varepsilon = +\infty,
\)
then
\[
\lim_{\eps \rightarrow 0} \mathfrak C_\eps =
\rho_+^{-1}\icap_{\mathrm{Newt}}(B),
\]
and hence
\[
z_\varepsilon^\pm
=
\pm
\left(
\frac{\bar k_b}{|B|}
\rho_+^{-1}\icap_{\mathrm{Newt}}(B)
\right)^{1/2}
+o(1), \qquad \mathrm{as}\; \eps \rightarrow 0. 
\]
Thus the classical homogeneous-phase Minnaert law is recovered.

\item If
\(
\lim_{\eps \rightarrow 0}\Theta_\varepsilon = 
\Theta_*\in(0,\infty),
\)
then
\[
\lim_{\eps \rightarrow 0} \mathfrak C_\varepsilon
=
\icap^{\mathrm{flat}}_{\Theta_*,\nu_*}(B).
\]

\item If
\(\lim_{\eps \rightarrow 0}\Theta_\varepsilon = 0, \)
then
\[
\lim_{\eps \rightarrow 0}\mathfrak C_\varepsilon =
\icap^{\mathrm{flat}}_{0,\nu_*}(B).
\]

The corresponding limiting Minnaert frequencies follow immediately
from the expansion above.
\end{enumerate}
\end{theorem}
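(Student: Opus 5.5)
The proof has two parts: first the uniform scalar reduction giving \eqref{eq:main_1}--\eqref{eq:main_2}, then the identification of the limit of \(\mathfrak C_\varepsilon\).

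\textbf{Scalar reduction.} We start from the variational (sesquilinear form) characterization of resonances. A resonance \(z\in K\) is a value for which the transmission problem
\[
-\nabla\cdot\rho_\varepsilon^{-1}\nabla u-z^2k_\varepsilon^{-1}u=0
\]
with outgoing radiation condition has a nontrivial solution. We rescale to the reference bubble \(B\) via \(x=x_\varepsilon+\varepsilon\widetilde x\). Inside \(B\) we split the solution as \(u=c\,\one+u^\perp\), where \(\one\) is the zero Neumann mode and \(u^\perp\) has zero mean. Because of the scaling \(\rho_b^\varepsilon=\bar\rho_b\varepsilon^2\), the interior form is, after rescaling, of order one on \(u^\perp\). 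The frequency term \(z^2\varepsilon^2/(\bar k_b\varepsilon^2)\) couples to the mean. Outside the bubble, the solution is given by the exterior background transmission problem (medium \(\rho^0,k^0\), outgoing) with Dirichlet data on \(\partial D_\varepsilon\).

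The key analytic input is a uniform estimate: for \(z\in K\subset\mathcal O\), this exterior Dirichlet-to-Neumann map, suitably normalized by \(\varepsilon\), exists and satisfies
\[
\varepsilon^{-1}\Lambda_{\varepsilon}(z)[\one]=-\mathfrak C_\varepsilon+O(\varepsilon),
\]
uniformly in \(d_\varepsilon\). I would prove this by writing the exterior solution as the static capacitary potential of \(D_\varepsilon\) in the two-phase medium plus a correction. The correction solves a problem whose right-hand side is \(z^2\)-small on the scale \(\varepsilon\), and it is controlled by the background resolvent on \(K\), which is bounded because \(K\) avoids \(\mathcal O_{\mathrm{bg}}\).

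Eliminating \(u^\perp\) by a Schur complement, which is an invertible \(O(1)\) block, leaves a scalar equation
\[
|B|\,\frac{z^2}{\bar k_b}-\mathfrak C_\varepsilon+r_\varepsilon(z)=0,\qquad |r_\varepsilon(z)|+|\partial_z r_\varepsilon(z)|\le C\varepsilon .
\]
The function \(r_\varepsilon\) is analytic on \(K\). Rouché's theorem then gives at most two roots in \(K\) (exactly two near \(\pm(\bar k_b\mathfrak C_\varepsilon/|B|)^{1/2}\) when these lie in \(K\)), each within \(C\varepsilon\) of the leading term.

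\textbf{The main obstacle.} The hard step is uniformity of the remainder as \(d_\varepsilon\to0\). Trace and extension constants near \(\partial D_\varepsilon\) must not degenerate when the interface nearly touches the bubble. I would obtain this from a uniform Poincaré/Hardy inequality in the weighted energy \(\int a|\nabla v|^2\), whose coefficient bounds \(\min\rho_\pm^{-1}\le a\le\max\rho_\pm^{-1}\) are independent of the geometry. Combined with a compactness–contradiction argument on \(K\) for the resolvent part, this would give the needed bounds. I would try this first.

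\textbf{Limits of the capacitance.} After rescaling, \(\mathfrak C_\varepsilon\) is the minimum over \(\mathcal A_B\) of \(\int_{\R^3\setminus\overline B}a_\varepsilon|\nabla v|^2\), where
\[
a_\varepsilon(\widetilde x)=\rho_+^{-1}\one_{\Omega_+}(x_\varepsilon+\varepsilon\widetilde x)+\rho_-^{-1}\one_{\Omega_-}(x_\varepsilon+\varepsilon\widetilde x).
\]
The local representation \eqref{eq:2} and the \(C^2\) regularity of \(\Gamma\) show that, locally, \(\Omega_+\) is the subgraph of a \(C^2\) function with vanishing gradient at \(p_*\). Under Assumption~\ref{as_1}(i), the uniqueness of the supporting point \(\xi_*\) forces \(\xi_\varepsilon\to\xi_*\) and \(\nu_\Gamma(p_\varepsilon)\to\nu_*\). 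It follows that \(a_\varepsilon\to a^{\mathrm{flat}}_{\Theta_*,\nu_*}\) almost everywhere, since the limiting interface is a null set. In case (ii), \(a_\varepsilon\to\rho_+^{-1}\) on compacts.

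With uniform ellipticity and a.e. convergence of the coefficients, the energies Mosco-converge on \(D^{1,2}\) with the constraint \(v=1\) on \(B\): the liminf inequality follows from weak lower semicontinuity, and the recovery sequence is the constant sequence, by dominated convergence. Mosco convergence gives convergence of the minima. This yields (a) in the separated regime, and (b) and (c) with the limiting coefficient \(a^{\mathrm{flat}}_{\Theta_*,\nu_*}\), including \(\Theta_*=0\) where the interface touches \(\partial B\) at \(\xi_*\).
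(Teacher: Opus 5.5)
Your proposal is correct and follows the paper's proof essentially step by step. The shared steps are: the Dirichlet-to-Neumann characterization and rescaling to $B$; the Schur complement on $\Span\{\one\}\oplus H_0$, whose mean-zero block dominates the scalar and coupling terms by a factor $\eps^{-2}$ because $\rho_b^\eps=\bar\rho_b\eps^2$; uniform bounds via compactness--contradiction for the perforated truncated problem; comparison of $\mathfrak q_\eps(z;\one,\one)$ with the static energy $\mathfrak q_\eps(0;\one,\one)=\mathfrak C_\eps$ through the capacitary potential plus a correction; Rouch\'e's theorem; and a.e.\ convergence of the rescaled coefficients (via $\xi_\eps\to\xi_*$) combined with Mosco convergence on the fixed set $\mathcal A_B$. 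The near-contact uniformity you flag as the main obstacle is automatic in the paper: the rescaled trace lifting and the compactness argument see the interface only through the bounds $\min\rho_\pm^{-1}\le a\le\max\rho_\pm^{-1}$, so no Hardy-type inequality is needed.
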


\begin{remark}
Let \(e_\varepsilon \in \partial D_{\varepsilon}\) and \(p_\varepsilon \in \Gamma\) be the minimizing pair satisfying \eqref{eq:mini}, and let \(\zeta_\varepsilon\) be given by \eqref{eq:scaled}. Then the point \(p_\varepsilon\) is represented by
\[
p_\eps = x_\eps + \eps {\widetilde x}_{p_\eps}
\]
in the rescaled variables
\( {\widetilde x}=\frac{x-x_\varepsilon}{\varepsilon}\). Equivalently, 
\[
{\widetilde x}_{p_\eps} = 
\frac{p_\varepsilon-x_\varepsilon}{\varepsilon}
=
\xi_\varepsilon - \Theta_\varepsilon\nu_\varepsilon,
\]
Hence the tangent plane to the rescaled interface at \({\widetilde x}_{p_\eps}\)
is
\[
\nu_\varepsilon\cdot {\widetilde x}
=
\nu_\varepsilon\cdot\xi_\varepsilon
-
\Theta_\varepsilon.
\]
Under Assumption \ref{as_1} (i), we have
\begin{align}\label{eq:30}
\lim_{\eps \rightarrow 0} p_\varepsilon = p_*,
\qquad
\lim_{\eps \rightarrow 0}  \xi_\varepsilon = \xi_*,
\qquad
\lim_{\eps \rightarrow 0} \nu_\Gamma(p_\varepsilon) = \nu_*,
\end{align}
see Lemma \ref{lem:closest-point-localization} for the proof.
Then, it follows that
the rescaled interface converges locally to \( \Pi_{\Theta_*,\nu_*}^{B}\), defined by \eqref{eq:interface}.
Thus, when \(\Theta_*=0\), the limiting interface is the supporting
plane of \(B\) in the direction \(\nu_*\). When \(\Theta_*>0\), it is
the parallel plane obtained by translating the supporting plane by the
distance \(\Theta_*\) in the direction \(\nu_*\). The flat comparison geometry is illustrated in 
Figure~\ref{fig:flat-comparison-geometry}.

On the other hand, under Assumption \ref{as_1} (ii),
the rescaled interface leaves every fixed bounded subset of
\(\mathbb R^3\).
\end{remark}

\begin{figure}[t]
\centering
\begin{tikzpicture}[
  x=1.15cm,
  y=1.15cm,
  >=Latex,
  every node/.style={font=\small},
  support/.style={
    blue!65!black,
    densely dashed,
    line width=0.9pt
  },
  interface/.style={
    orange!80!black,
    line width=1.05pt
  }
]

\def\h{1.45}

\fill[orange!4]
  (-3.15,\h) rectangle (3.20,2.05);

\draw[support]
  (-3.00,0) -- (3.00,0);

\draw[interface]
  (-3.00,\h) -- (3.00,\h);

\path[
  fill=gray!18,
  draw=black,
  line width=0.9pt
]
  (0,0)
  .. controls (-0.38,-0.02) and (-1.18,-0.28)
     .. (-1.45,-1.02)
  .. controls (-1.68,-1.72) and (-0.88,-2.25)
     .. (0.02,-2.28)
  .. controls (0.98,-2.27) and (1.58,-1.66)
     .. (1.43,-0.93)
  .. controls (1.30,-0.29) and (0.38,-0.02)
     .. (0,0)
  -- cycle;

\node at (0,-1.25) {$B$};

\fill (0,0) circle (1.35pt);
\node[below left=2pt] at (0,0) {$\xi_*$};

\draw[
  -{Latex[length=2.5mm]},
  line width=1.0pt
]
  (0,0.12) -- (0,0.98);

\node[left=3pt] at (0,0.58) {$\nu$};

\draw[<->,line width=0.85pt]
  (2.50,0.05)
  --
  node[right=4pt] {$\Theta_*$}
  (2.50,\h-0.05);

\draw[line width=0.75pt]
  (2.38,0) -- (2.62,0);

\draw[line width=0.75pt]
  (2.38,\h) -- (2.62,\h);

\node[
  anchor=south west,
  text=blue!65!black
]
  at (-2.92,0.05)
  {$\Pi_{0,\nu}^{B}:
    \ \nu\!\cdot\!{\widetilde x}=\ell_B(\nu)$};

\node[
  anchor=south west,
  text=orange!80!black
]
  at (-2.92,\h+0.06)
  {$\Pi_{\Theta,\nu}^{B}:
    \ \nu\!\cdot\!{\widetilde x}=\ell_B(\nu)+\Theta_*$};

\end{tikzpicture}

\caption{
A normal cross-section of the flat comparison geometry.
The dashed line is the supporting plane of \(B\) at \(\xi_*\),
whereas the solid line is the flat comparison interface
\(\Pi_{\Theta,\nu}^{B}\), obtained by translating the supporting
plane by the distance \(\Theta_*\) in the direction \(\nu\).
}
\label{fig:flat-comparison-geometry}
\end{figure}

\section{Characterization of the Minnaert resonance}
\label{sec:Ch}

This section introduces the characterization of scattering resonances in the physical and rescaled domains in Section \ref{sec:Ch_11} and \ref{sec:Ch_12}, respectively, and identifies the Minnaert resonances in Section \ref{sec:Ch_2}.

\subsection{Scattering resonances in the physical and rescaled domains}

\subsubsection{Physical-domain formulation} \label{sec:Ch_11}
Set
\begin{align*}
c_-:=\sqrt{\frac{k_-}{\rho_-}}.
\end{align*}
Let $\nu_D$ denote the unit normal on $\partial D_\eps$ pointing from
$D_\eps$ into $\Omega_+\setminus\overline{D_\eps}$.
For traces on $\Gamma$, we use the convention
\[
[v]_\Gamma:=v_+-v_-,
\]
and
\[
\left[
\frac1{\rho^0}\partial_{\nu_\Gamma}v
\right]_\Gamma
:=
\frac1{\rho_+}\partial_{\nu_\Gamma}v_+
-
\frac1{\rho_-}\partial_{\nu_\Gamma}v_-.
\]

Since \(H_{\rho_\varepsilon,k_\varepsilon}\) is a black-box Hamiltonian, by Theorem~4.9 in \cite{DM}, those nonzero resonances admit following characterization of resonances in the physical domain.

\begin{proposition}\label{Pr:1}
A complex number $z \in \mathbb C\backslash \{0\}$ is a resonance of the acoustic
transmission problem if there exists a nonzero field
\[
u_\eps
\in
H^1_{\loc}
\bigl(
\R^3\setminus(\Gamma\cup\partial D_\eps)
\bigr)
\]
such that
\begin{align*}
\nabla\cdot
\left(
\frac1{\rho_\eps}\nabla u_\eps
\right)
+
\frac{z^2}{k_\eps}u_\eps
=0
\end{align*}
in each of the three regions
\(
D_\eps,
\;
\Omega_+\setminus\overline{D_\eps},
\) and \(\Omega_-,\)
and such that
\begin{align*}
&[u_\eps]_\Gamma=0,
\qquad
\left[
\frac1{\rho^0}
\partial_{\nu_\Gamma}u_\eps
\right]_\Gamma
=0
\qquad
\mathrm{on}\; \Gamma,\\
& u_\eps^{\mathrm{int}}
=
u_\eps^{\mathrm{ext}},\qquad
\frac1{\rho_b^\eps}
\partial_{\nu_D}u_\eps^{\mathrm{int}}
=
\frac1{\rho_+}
\partial_{\nu_D}u_\eps^{\mathrm{ext}}
\qquad
\mathrm{on}\; \partial D_\eps,\\
& u_\eps\;\mathrm{is}\; {z/c_--}\; \mathrm{outgoing}.
\end{align*}
Here, we call that $v$ is $z-\rm{outgoing}$ means that there exists $g\in L_{\mathrm{comp}}^2(\R^3)$ and $r>0$ such that 
\begin{align*}
v(x) =  \int_{\R^3}\frac{e^{iz|x-y|}}{4\pi|x-y|} g(y) dy \qquad x\in \R^3 \backslash \overline{B_r},
\end{align*}
where $L^2_{\mathrm{comp}}(\R^3)$ is defined by
\begin{align*}
L^2_{\mathrm{comp}}(\R^3):=\{u\in L^2(\R^3): \exists\; r>0, |u(x)| = 0\; \mathrm{for}\; |x|>r\}.
\end{align*}
\end{proposition}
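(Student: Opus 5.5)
The statement to prove is Proposition~\ref{Pr:1}. I would reduce it to the abstract black-box characterization \cite[Theorem~4.9]{DM}: a nonzero $z$ is a resonance of $H_{\rho_\eps,k_\eps}$ precisely when there is a nonzero outgoing $u\in\mathcal D_{\loc}(H_{\rho_\eps,k_\eps})$ with $(H_{\rho_\eps,k_\eps}-z^2)u=0$. The task is then to translate this abstract condition into the piecewise PDE system and to identify the correct outgoing wavenumber.

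\textbf{Step 1: interior equation and transmission conditions.} Take $u\in H^1_{\loc}(\R^3)$ with $-k_\eps\nabla\cdot\rho_\eps^{-1}\nabla u=z^2u$ in $L^2_{\loc}$. Dividing by the piecewise constant $k_\eps$ gives
\[
\nabla\cdot(\rho_\eps^{-1}\nabla u)+z^2k_\eps^{-1}u=0
\]
in the distributional sense on $\R^3$. Testing against $\phi\in C_c^\infty$ supported in each of the three open regions yields the Helmholtz equation there.

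\textbf{Step 2: traces on the interfaces.} Testing against $\phi$ supported across $\Gamma$ or $\partial D_\eps$ and integrating by parts region by region gives continuity of the conormal flux $\rho^{-1}\partial_\nu u$. Both $\Gamma$ and $\partial D_\eps$ are $C^2$, and $u$ solves Helmholtz on each side, so interior elliptic regularity ensures the normal traces exist in $H^{-1/2}$. Continuity of the Dirichlet traces follows from $u\in H^1_{\loc}(\R^3)$.

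\textbf{Step 3: converse.} If $u_\eps$ is piecewise $H^1_{\loc}$ and satisfies the equations together with both pairs of jump conditions, then the matching Dirichlet traces glue it into an $H^1_{\loc}(\R^3)$ function. The matching fluxes make the divergence identity hold across the interfaces, so $u_\eps\in\mathcal D_{\loc}$ and it solves the global equation.

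\textbf{Step 4: outgoing condition (main point).} Outside a large ball, which lies in $\Omega_-$ because $\Omega$ is bounded, the operator reduces to $-c_-^2\Delta$. The equation there is therefore $(\Delta+(z/c_-)^2)u=0$. The black-box notion of outgoing in \cite{DM} is stated with respect to the free reference operator at infinity, which is $-c_-^2\Delta$. Rescaling the spectral parameter, being outgoing for $H$ at $z$ is the same as $u$ coinciding outside $B_r$ with the free outgoing resolvent at wavenumber $z/c_-$ applied to a compactly supported $L^2$ function. The latter is exactly the stated representation with kernel $e^{i(z/c_-)|x-y|}/(4\pi|x-y|)$.

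The step I expect to require the most care is matching the abstract black-box outgoing definition to this explicit kernel representation. The wave speed factor $c_-$ and the cut-off conventions must be tracked, and one must check that $z\neq0$ excludes the degenerate zero-resonance case. Steps 1–3 are routine weak-formulation arguments.
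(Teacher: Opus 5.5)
Your proposal is correct and takes the same route as the paper, which simply invokes the black-box characterization \cite[Theorem~4.9]{DM} for $H_{\rho_\eps,k_\eps}$ without further detail. Your Steps 1--4 spell out what the paper leaves implicit: the weak formulation that yields the transmission conditions on $\Gamma$ and $\partial D_\eps$, and the normalization by $c_-^2$ (so that the operator is $-\Delta$ outside a ball, as the black-box framework requires), which produces the $z/c_-$-outgoing condition.
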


This proposition yields an alternative characterization of resonances in terms of the existence of a nontrivial solution to the following boundary value problem. Before doing so, we introduce some additional notation.
Define the exterior domain
\[
E_\eps:=\R^3\setminus\overline{D_\eps}.
\]
Let $\mathcal U_\eps\subset\C$ be an open set on which the outgoing
exterior Dirichlet transmission problem below is uniquely solvable.
For $z\in\mathcal U_\eps$ and
$f\in H^{1/2}(\partial D_\eps)$, let $U_\eps^f(z)$ denote the unique
outgoing solution of
\[
\nabla\cdot
\left(
\frac1{\rho^0}\nabla U_\eps^f(z)
\right)
+
\frac{z^2}{k^0}U_\eps^f(z)
=0
\qquad
\mathrm{in}\; E_\eps\setminus\Gamma,
\]
subject to
\begin{align*}
&[U_\eps^f(z)]_\Gamma=0, \qquad
\left[
\frac1{\rho^0}
\partial_{\nu_\Gamma}U_\eps^f(z)
\right]_\Gamma
=0,\\
&U_\eps^f(z)=f
\qquad
\mathrm{on }\;\partial D_\eps,
\end{align*}
and the ${z/c_--}$ outgoing condition at infinity. We further define
\begin{align}\label{eq:D_to_N}
\mathcal N_\eps(z):
H^{1/2}(\partial D_\eps)
\longrightarrow
H^{-1/2}(\partial D_\eps),\qquad \mathcal N_\eps(z)f
:=
\left.
\frac1{\rho_+}
\partial_{\nu_D}U_\eps^f(z)
\right|_{\partial D_\eps}.
\end{align}
We further introduce the sesquilinear form
\begin{align*}
t_\eps(z;u,v)
:={}&
\int_{D_\eps}
\frac1{\rho_b^\eps}
\nabla u\cdot\nabla\overline v
\,\dd x -
z^2
\int_{D_\eps}
\frac1{k_b^\eps}
u\overline v
\,\dd x\\ 
&-
\left\langle
\mathcal N_\eps(z)\gamma u,
\gamma v
\right\rangle_{
H^{-1/2}(\partial D_\eps),
H^{1/2}(\partial D_\eps)
}, \quad u,v\in H^1(D_\eps),
\end{align*}
and let
\[
\mathcal T_\eps(z):
H^1(D_\eps)
\longrightarrow
H^1(D_\eps)
\]
be defined by
\[
\langle T_\eps(z)u,v\rangle_{H^1(D_\eps)}
:=
t_\eps(z;u,v).
\]
A nontrivial solution \(u\) of $\mathcal T_\eps (z) u = 0 $ for some $z\in\mathcal U_\eps$ naturally satisfies the below boundary value problem:
\begin{align*}
& \nabla\cdot
\left(
\frac1{\rho_b^\eps}\nabla u
\right)
+
\frac{z^2}{k_b^\eps}u
=0
\qquad
\text{in }D_\eps,\\
&
\frac1{\rho_b^\eps}
\partial_{\nu_D}u
-
\mathcal N_\eps(z)\gamma u
=0
\qquad
\text{on}\;\partial D_\eps.
\end{align*}
Following the argument of Proposition~4.3 in \cite{LiSinielastic}, together with
Proposition \ref{Pr:1} and the above observation, we obtain the
following equivalent characterization of the resonances.

\begin{proposition}\label{pro:equi}
Fixed \(\varepsilon > 0\) Let $z \in \mathcal U_\eps$. 
$z \in \C \backslash \{0\}$ is a resonance of the full acoustic transmission problem, if and only if there exists $u \neq 0\in H^1(D_\eps)$ such that
\[
\mathcal T_\eps(z)u=0.
\]
\end{proposition}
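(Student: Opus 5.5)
We prove the two implications separately; the link between them is that, for \(z\in\mathcal U_\eps\), the map \(f\mapsto U_\eps^f(z)\) is a well-defined solution operator. We use Proposition \ref{Pr:1} as the definition of resonance, with a nontrivial \(u_\eps\in H^1_{\loc}(\R^3\setminus(\Gamma\cup\partial D_\eps))\).

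\emph{Resonance \(\Rightarrow\) nontrivial kernel.} Let \(u_\eps\) be a resonant state and set \(u:=u_\eps|_{D_\eps}\in H^1(D_\eps)\). The exterior restriction \(u_\eps|_{E_\eps}\) solves the outgoing exterior transmission problem with Dirichlet datum \(f=u_\eps^{\mathrm{ext}}|_{\partial D_\eps}\). By continuity of traces, \(f=\gamma u\). Uniqueness on \(\mathcal U_\eps\) then gives \(u_\eps|_{E_\eps}=U_\eps^{\gamma u}(z)\). The flux condition on \(\partial D_\eps\) becomes
\[
\frac1{\rho_b^\eps}\partial_{\nu_D}u=\mathcal N_\eps(z)\gamma u .
\]
Now test the interior equation against \(\overline v\), \(v\in H^1(D_\eps)\), and integrate by parts. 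This gives \(t_\eps(z;u,v)=0\) for all \(v\), hence \(\mathcal T_\eps(z)u=0\) by the Riesz definition of \(\mathcal T_\eps\). If \(u=0\), then \(\gamma u=0\), so \(U_\eps^0(z)=0\) by uniqueness and \(u_\eps\equiv0\), a contradiction. Hence \(u\neq0\).

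\emph{Nontrivial kernel \(\Rightarrow\) resonance.} Let \(u\neq0\) satisfy \(\mathcal T_\eps(z)u=0\). Testing with \(v\in C_c^\infty(D_\eps)\) gives the interior Helmholtz equation in the distributional sense. The right-hand side \(z^2u/k_b^\eps\) lies in \(L^2\), so \(\nabla\cdot(\rho_b^{-1}\nabla u)\in L^2(D_\eps)\). This makes the conormal derivative defined in \(H^{-1/2}(\partial D_\eps)\) through Green's formula. Comparing that formula with \(t_\eps(z;u,v)=0\) for general \(v\), and using surjectivity of \(\gamma:H^1(D_\eps)\to H^{1/2}(\partial D_\eps)\), we obtain \(\rho_b^{-1}\partial_{\nu_D}u=\mathcal N_\eps(z)\gamma u\).

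Next define \(u_\eps:=u\) in \(D_\eps\) and \(u_\eps:=U_\eps^{\gamma u}(z)\) in \(E_\eps\). By construction:
\begin{enumerate}[(i)]
\item the transmission conditions across \(\Gamma\) hold;
\item the outgoing condition holds;
\item the Dirichlet trace is continuous across \(\partial D_\eps\);
\item the flux condition on \(\partial D_\eps\) holds by the identity just derived together with the definition \eqref{eq:D_to_N}.
\end{enumerate}
Since \(u\neq0\), \(u_\eps\) is nontrivial, so Proposition \ref{Pr:1} shows that \(z\) is a resonance.

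The subtle point is the phrase ``if'' in Proposition \ref{Pr:1}: the forward direction needs the converse, namely that every resonance admits such a resonant state. This is the content of the black-box characterization \cite[Theorem~4.9]{DM}, which identifies poles of the continued resolvent with nonzero outgoing solutions. We invoke it as in \cite[Proposition~4.3]{LiSinielastic}.

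A secondary technical point is the rigorous meaning of the weak Neumann trace, including for complex \(z\). This is handled by Green's formula in \(H^1\) with \(L^2\) divergence, used the same way in both directions.
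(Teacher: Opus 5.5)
Your proof is correct and takes essentially the same route as the paper. The paper invokes Proposition~\ref{Pr:1} (the black-box characterization of resonances by nonzero outgoing solutions), notes that kernel elements of $\mathcal T_\eps(z)$ solve the interior problem with the Dirichlet-to-Neumann boundary condition, and defers the gluing argument to an earlier work; your write-up supplies that gluing/uniqueness argument explicitly and rightly points out that the forward implication needs the ``only if'' half of that characterization, which Proposition~\ref{Pr:1} as stated does not display.
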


We note that the open set \(\mathcal U_\varepsilon\), on which the exterior Dirichlet-to-Neumann map \(\mathcal N_\varepsilon\) defined in \eqref{eq:D_to_N} is well defined, depend on \(\varepsilon\).
Since the analysis below compares
the problems for different values of \(\varepsilon\), we need a fixed
spectral region on which these inverses exist uniformly. Therefore, in the remainder of this section, we aim to show that, for
every compact set \(K\subset\mathcal O\), the operator
\(\mathcal N_\varepsilon(z)\) is well defined on \(K\) for all
sufficiently small \(\varepsilon\). 

Let $R > 2$ be fixed so that the closure of $\Omega$ is contained in $B_{R-2}$. Here, $B_r:=\{x \in \R^3: |x| < r\}$ for $r>0$. On the space  
\[
V_\eps:=\{v\in H^1(B_R\setminus\overline{D_\eps}):\gamma_{\partial D_\eps}v=0\},
\] 
we define the sesquilinear form 
\begin{align} 
\label{eq:truncated-form}
\mathfrak b_{\eps,z}(u,v)
:={}&
\int_{B_R\setminus\overline{D_\eps}}
\bigl((\rho^0)^{-1}\nabla u\cdot\nabla\overline v-z^2 (k^{(0)})^{-1}\,u\overline v\bigr)(x)\,\dd x
\notag\\
&-
\frac{1}{\rho_-}\left\langle
\Lambda_R^{\rm out}(z/c_-)\gamma_Ru,
\gamma_Rv
\right\rangle_{H^{-1/2}(\partial B_R),H^{1/2}(\partial B_R)}, \quad\mathrm{for}\; u,v\in V_\eps.
\end{align}
Here, $\Lambda_R^{\rm out}(z)$ denote the exact exterior outgoing
Dirichlet-to-Neumann map on \(\partial B_R\) for
\[
(\Delta+z^2)u=0
\qquad\text{in }\mathbb R^3\setminus\overline{B_R}.
\] 
It is known that \(z\mapsto\Lambda_R^{\rm out}(z)\) is a meromorphic family in
\(
\mathcal L\bigl(
H^{1/2}(\partial B_R),
H^{-1/2}(\partial B_R)
\bigr).
\)
In~\eqref{eq:truncated-form}, we restrict \(z\) to the pole-free open set on which \(\Lambda_R^{\rm out}(z)\) is holomorphic.
Clearly, all functions in $V_\eps$ are always extended by zero through $D_\eps$.
Let
\[
\mathfrak A_\eps(z):V_\eps\longrightarrow V_\eps
\]
be the operator induced by the sesquilinear form \eqref{eq:truncated-form}. For \(\eps = 0\), we set \(V_\eps:= H^1(B_R)\).

\begin{remark}
For \(R>R_*\), the spherical-harmonic representation of the outgoing
Dirichlet-to-Neumann map \(\Lambda_R^{\mathrm{out}}(z)\) gives its
pole set as
\[
    \mathcal P_R=R^{-1}\Sigma,
    \qquad
    \Sigma:=
    \bigcup_{\ell\ge0}
    \left\{
        \zeta\in\mathbb C\setminus\{0\}:
        h_\ell^{(1)}(\zeta)=0
    \right\}.
\]
The map is understood by holomorphic extension at \(z=0\).
Each \(h_\ell^{(1)}\) has finitely many nonzero zeros
\cite{DLMF}, so \(\Sigma\) is countable.
Hence, given \(R_1>R_*\), we may choose \(R_2>R_1\) such that
\(R_2/R_1\) avoids all ratios of elements of \(\Sigma\), yielding
\[
    \mathcal P_{R_1}\cap\mathcal P_{R_2}=\varnothing.
\]

For \(z\notin\mathcal P_R\), the exact truncated formulation
is equivalent to the background transmission problem.
Hence \(\mathfrak A_0(z): V_0\longrightarrow V_0\) is invertible if and only if
\(z\in\mathcal O\). If a point \(z\in\mathcal O\) belongs
to \(\mathcal P_R\), another truncation radius can be chosen
for which the exterior Dirichlet-to-Neumann map is holomorphic
near \(z\), and the corresponding truncated operator is
invertible. Thus the auxiliary poles impose no additional
exclusions on \(\mathcal O\), which is independent of the
truncation radius. In each local argument below, we fix
an admissible radius and denote it simply by \(R\).
\end{remark}

\begin{proposition}
\label{le:uniform-inverse}
Let $K\subset\mathcal O$ be compact. Then there exist
$\eps_K>0$ such that
\[
\sup_{0<\eps<\eps_K}\sup_{z\in K}
\|\mathfrak A_\eps(z)^{-1}\|_{\mathcal L(V_\eps,V_\eps)}
\le C_K.
\]
Here, $C_K$ is a positive constant independent of $\eps_K$.
\end{proposition}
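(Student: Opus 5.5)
We argue by contradiction combined with a compactness argument. Suppose the uniform bound fails. Then there are sequences \(\varepsilon_n\to0\), \(z_n\in K\) and \(u_n\in V_{\varepsilon_n}\) with \(\|u_n\|_{H^1(B_R\setminus\overline{D_{\varepsilon_n}})}=1\) and \(\|\mathfrak A_{\varepsilon_n}(z_n)u_n\|_{V_{\varepsilon_n}}\to0\). Since \(K\) is compact, we may pass to a subsequence with \(z_n\to z_0\in K\subset\mathcal O\). As in the preceding remark, we fix an admissible radius \(R\) with \(\Lambda_R^{\rm out}\) holomorphic near \(z_0\); a finite covering of \(K\) makes this choice uniform.

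Extending \(u_n\) by zero inside \(D_{\varepsilon_n}\) gives \(u_n\in H^1(B_R)\) with unit norm, since the zero trace on \(\partial D_{\varepsilon_n}\) makes the zero extension \(H^1\). Passing to a further subsequence, \(u_n\rightharpoonup u\) weakly in \(H^1(B_R)\), strongly in \(L^2(B_R)\), and \(\gamma_R u_n\to\gamma_R u\) in \(H^{1/2-\delta}(\partial B_R)\); in fact strongly in \(H^{1/2}\) on \(\partial B_R\), by interior elliptic regularity near \(\partial B_R\), away from \(\Omega\) and the bubble.

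\textbf{Step 1: \(u\) solves the limit problem.} Let \(\varphi\in C^\infty(\overline{B_R})\) vanish near \(p_*\), or more generally near the limit of \(x_{\varepsilon_n}\). Then \(\varphi\in V_{\varepsilon_n}\) for \(n\) large, and \(\mathfrak b_{\varepsilon_n,z_n}(u_n,\varphi)\to\mathfrak b_{0,z_0}(u,\varphi)\), so this limit is \(0\). In \(\mathbb R^3\), a single point has zero \(H^1\)-capacity, so such \(\varphi\) are dense in \(H^1(B_R)\). Hence \(\mathfrak A_0(z_0)u=0\). Since \(z_0\in\mathcal O\), invertibility of \(\mathfrak A_0(z_0)\) gives \(u=0\).

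\textbf{Step 2: strong convergence gives the contradiction.} We test with \(v=u_n\) itself and take the real part, using a Gårding-type decomposition:
\[
\int_{B_R\setminus\overline{D_{\varepsilon_n}}}(\rho^0)^{-1}|\nabla u_n|^2
=\operatorname{Re}\,\mathfrak b_{\varepsilon_n,z_n}(u_n,u_n)+\operatorname{Re}\Bigl(z_n^2\int (k^0)^{-1}|u_n|^2\Bigr)+\frac1{\rho_-}\operatorname{Re}\langle\Lambda_R^{\rm out}(z_n/c_-)\gamma_Ru_n,\gamma_Ru_n\rangle .
\]
The first term tends to \(0\), and the second tends to \(0\) because \(u_n\to0\) in \(L^2\). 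For the third term we use the strong trace convergence \(\gamma_R u_n\to0\) in \(H^{1/2}(\partial B_R)\). Alternatively, we split \(\Lambda_R^{\rm out}=\Lambda_R^{\rm out}(0)+\) a compact operator, with \(-\operatorname{Re}\langle\Lambda_R^{\rm out}(0)g,g\rangle\ge0\), and move the sign-definite part to the left-hand side.

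In either case \(\|\nabla u_n\|_{L^2}\to0\), so \(\|u_n\|_{H^1}\to0\), contradicting the unit normalization. Therefore the inverses exist for small \(\varepsilon\) and are uniformly bounded on \(K\). Existence of the inverse, not just an a priori bound, follows because \(\mathfrak A_\varepsilon(z)\) is Fredholm of index zero (coercive plus compact), so injectivity with a bound implies invertibility.

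\textbf{Main obstacle.} The delicate point is Step 1 in the near-contact regime \(\Theta_\varepsilon\to0\). There the holes \(D_{\varepsilon_n}\) sit next to the interface \(\Gamma\), and we must justify that the test functions vanishing near \(p_*\) still suffice. The capacity argument handles this, because the limiting set is a single point regardless of the interface.

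We also need the constants to be independent of the shape of the perforation. The argument only uses the zero extension and the fact that \(D_{\varepsilon_n}\) shrinks to a point, and never uses any trace or extension constants on \(\partial D_{\varepsilon_n}\). So no uniformity with respect to \(\Theta_\varepsilon\) is required.
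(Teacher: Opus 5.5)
Your proposal is correct and follows the paper's contradiction--compactness argument. The zero-extended weak limit solves the background problem because the shrinking holes have vanishing capacity: you test with fixed functions vanishing near the limit point, where the paper builds approximants $\psi_n\in V_{\varepsilon_n}$ from Daners' result. Strong convergence then follows from the sign-definite part of the energy, with your splitting $\Lambda_R^{\rm out}(z_n/c_-)=\Lambda_R^{\rm out}(0)+(\text{compact})$ playing the role of the paper's uniform G{\aa}rding inequality \eqref{eq:uniform-garding}.

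Rely on that splitting rather than on the claimed strong $H^{1/2}(\partial B_R)$ convergence of $\gamma_R u_n$. That convergence does not follow from interior regularity as stated, because $\partial B_R$ carries the nonlocal Dirichlet-to-Neumann condition and $u_n$ solves the equation only approximately. It can be justified by first extending $u_n$ outward by the outgoing solution and then applying a Caccioppoli estimate with an $H^{-1}$-small right-hand side.
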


The proof of Proposition \ref{le:uniform-inverse} will be given in Appendix \ref{app:uniform-stability}, which is based on a contradiction and compactness procedure: a sequence violating the estimate would produce a nontrivial solution
of the limiting homogeneous background problem, contradicting the
definition of \(\mathcal O\).

Under the assumptions of Proposition \ref{le:uniform-inverse}, it follows directly that \(\mathcal N_\varepsilon(z)\), specified by \eqref{eq:D_to_N}, is well defined for every \(z\in K\) and \(\eps \in (0,\eps_ K)\).

\subsubsection{Rescaling and resonance equivalence} \label{sec:Ch_12}

In this section, we pull the bounded-domain
problem back to the fixed reference domain $B$ and study its associated equivalent resonance characterization.
We define the reference-to-physical transport operators
\begin{align*}
&\left(\mathcal P_\eps U\right)(x):= U\left( \frac{x-x_\eps}{\eps}\right), \quad x \in \R^3, \;\; U \in H^1(B),\\
&\left(\mathcal P_\eps^\partial V\right)(x)
:= V\left( \frac{x-x_\eps}{\eps}\right), \quad x \in \partial B, \;\; V \in L^2(\partial B).
\end{align*}
Therefore,
\[
\gamma_{\partial D_\eps}\mathcal P_\eps U
=
\mathcal P_\eps^\partial\gamma_{\partial B}U.
\]
For \(f,g\in H^{1/2}(\partial B)\), define the rescaled exterior
Dirichlet-to-Neumann form by
\begin{align}\label{eq:scaled_2}
\mathfrak q_\eps(z;f,g)
:=
-\frac1\eps
\left\langle
\mathcal N_\eps(z)\mathcal P_\eps^\partial f,
\mathcal P_\eps^\partial g
\right\rangle_{
H^{-1/2}(\partial D_\eps),
H^{1/2}(\partial D_\eps)
}.
\end{align}
Furthermore, let
\begin{align}\label{eq:scaled_1}
\widehat t_\eps(z;U,V)
:=
\frac1\eps
t_\eps
\bigl(
z;\mathcal P_\eps U,\mathcal P_\eps V
\bigr),\quad \mathrm{for}\; U,V\in H^1(B),
\end{align}
and let
\[
\widehat {\mathcal T}_\eps(z):
H^1(B)\longrightarrow H^1(B)
\]
be defined by
\[
\langle\widehat {\mathcal T}_\eps(z)U,V\rangle_{H^1(B)}
:=
\widehat t_\eps(z;U,V).
\]
Since $\mathcal P_\eps$ is invertible for each $\eps >0$, we have 
\begin{align}\label{eq:10}
\ker \mathcal T_\eps(z)\neq\{0\}
\quad\Longleftrightarrow\quad
\ker\widehat {\mathcal T}_\eps(z)\neq\{0\}.
\end{align}
By \eqref{eq:10}, it suffices to study the kernel of \(\widehat {\mathcal T}_\eps\) for the problem posed on the fixed reference domain \(B\). The following lemma provides an explicit representation of the associated sesquiliner form \(\widehat t_\eps\).

\begin{lemma}\label{le:1}
For every \(U,V\in H^1(B)\), we have 
\begin{align}\label{eq:variational_1}
\widehat t_\eps(z;U,V)
={}&
\frac1{\bar\rho_b\eps^2}
\int_B\nabla U({\widetilde x})\cdot\nabla\overline V({\widetilde x})\,\dd {\widetilde x}
-
\frac{z^2}{\bar k_b}
\int_B U({\widetilde x})\overline V({\widetilde x})\,\dd {\widetilde x} +
\mathfrak q_\eps(z;\gamma U,\gamma V).
\end{align}
Here, \(\widehat t_\eps\) and \(\mathfrak q_\eps\) are defined by \eqref{eq:scaled_1} and \eqref{eq:scaled_2}, respectively.
\end{lemma}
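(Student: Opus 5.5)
The identity follows from a direct change of variables \(x=x_\eps+\eps\widetilde x\) in each of the three terms of \(t_\eps\), evaluated on \(u=\mathcal P_\eps U\) and \(v=\mathcal P_\eps V\), followed by division by \(\eps\).

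For the volume terms, note that \(\nabla_x(\mathcal P_\eps U)(x)=\eps^{-1}(\nabla U)\bigl((x-x_\eps)/\eps\bigr)\) and \(\dd x=\eps^3\dd\widetilde x\). Using \(\rho_b^\eps=\bar\rho_b\eps^2\), the gradient term becomes
\[
\int_{D_\eps}\frac1{\rho_b^\eps}\nabla u\cdot\nabla\overline v\,\dd x
=\frac{\eps^3\eps^{-2}}{\bar\rho_b\eps^2}\int_B\nabla U\cdot\nabla\overline V\,\dd\widetilde x
=\frac{\eps^{-1}}{\bar\rho_b}\int_B\nabla U\cdot\nabla\overline V\,\dd\widetilde x .
\]
Dividing by \(\eps\) yields the coefficient \(1/(\bar\rho_b\eps^2)\). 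Similarly, with \(k_b^\eps=\bar k_b\eps^2\),
\[
z^2\int_{D_\eps}\frac1{k_b^\eps}u\overline v\,\dd x=\frac{z^2\eps^3}{\bar k_b\eps^2}\int_B U\overline V\,\dd\widetilde x .
\]
After division by \(\eps\), this gives \(\frac{z^2}{\bar k_b}\int_B U\overline V\).

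For the boundary term, the trace identity \(\gamma_{\partial D_\eps}\mathcal P_\eps U=\mathcal P_\eps^\partial\gamma_{\partial B}U\) stated above gives
\[
-\frac1\eps\bigl\langle\mathcal N_\eps(z)\gamma u,\gamma v\bigr\rangle
=-\frac1\eps\bigl\langle\mathcal N_\eps(z)\mathcal P_\eps^\partial\gamma U,\mathcal P_\eps^\partial\gamma V\bigr\rangle .
\]
The right-hand side is exactly \(\mathfrak q_\eps(z;\gamma U,\gamma V)\) by the definition \eqref{eq:scaled_2}. This step is purely notational. Its only subtlety is that the duality pairing is the sesquilinear one, consistent with \(t_\eps\), and that \(\mathcal N_\eps(z)\) is well defined for \(z\in K\) and small \(\eps\) by Proposition \ref{le:uniform-inverse}.

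Summing the three contributions gives \eqref{eq:variational_1}. I do not expect any real obstacle. The only points needing care are checking that \(\mathcal P_\eps\) maps \(H^1(B)\) into \(H^1(D_\eps)\) (a bounded dilation), and tracking the powers of \(\eps\) correctly: \(\eps^3\) from the volume element, \(\eps^{-2}\) from the two gradients, \(\eps^{-2}\) from the Minnaert coefficients, and \(\eps^{-1}\) from the normalization.
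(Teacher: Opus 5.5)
Your proposal is correct and matches the paper's proof. The paper likewise changes variables $x=x_\eps+\eps\widetilde x$ in the two volume integrals and uses $\rho_b^\eps=\bar\rho_b\eps^2$ and $k_b^\eps=\bar k_b\eps^2$ to get the coefficients $1/(\bar\rho_b\eps^2)$ and $z^2/\bar k_b$; it then identifies the boundary term with $\mathfrak q_\eps(z;\gamma U,\gamma V)$ via the trace identity $\gamma_{\partial D_\eps}\mathcal P_\eps U=\mathcal P_\eps^\partial\gamma_{\partial B}U$, and your power counting in $\eps$ agrees with it.
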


\begin{proof}
For \(U,V\in H^1(B)\), set
\[
u_\eps:=\mathcal P_\eps U
\quad \mathrm{and} \quad 
v_\eps:=\mathcal P_\eps V.
\]
We also denote by \(u_\eps\) its extension, obtained as the outgoing solution with the trace $\gamma_{\partial D_\eps} u_\eps$.
Introducing a change of variables
\(
x= x_\eps + \eps {\widetilde x},\)
by the chain rule, we have
\begin{align*}
\nabla_x u_\eps(x_\eps + \eps {\widetilde x})
=
\frac1\eps\nabla_{\widetilde x}U({\widetilde x}),
\qquad
\nabla_x v_\eps(x_\eps + \eps {\widetilde x})
=
\frac1\eps\nabla_{\widetilde x}V({\widetilde x}).
\end{align*}
Hence, we have 
\begin{align*}
\frac1\eps
\int_{D_\eps}
\frac1{\rho_b^\eps}
\nabla u_\eps(x)\cdot
\nabla\overline{v_\eps}(x)\,\dd x
&=
\frac1\eps
\int_B
\frac1{\rho_b^\eps}
\left(
\frac1\eps\nabla U ({\widetilde x})
\right)
\cdot
\left(
\frac1\eps\nabla\overline V ({\widetilde x})
\right)
\eps^3\,\dd {\widetilde x}\\
&=
\frac1{\rho_b^\eps}
\int_B
\nabla U({\widetilde x})\cdot\nabla\overline V({\widetilde x})\,\dd {\widetilde x}.
\end{align*}
Using the relation
\(
\rho_b^\eps=\bar\rho_b\eps^2,
\)
we obtain
\begin{align}\label{eq:17}
\frac1\eps
\int_{D_\eps}
\frac1{\rho_b^\eps}
\nabla u_\eps(x) \cdot
\nabla\overline{v_\eps}(x)\,\dd x
=
\frac1{\bar\rho_b\eps^2}
\int_B
\nabla U({\widetilde x})\cdot\nabla\overline V({\widetilde x})\,\dd {\widetilde x}.
\end{align}
Similarly,
\[
\begin{aligned}
-\frac{z^2}{\eps}
\int_{D_\eps}
\frac1{k_b^\eps}
u_\eps(x)\overline{v_\eps}(x)\,\dd x
&=
-\frac{z^2}{\eps}
\int_B
\frac1{k_b^\eps}
U({\widetilde x})\overline V({\widetilde x})\,\eps^3\,\dd {\widetilde x}
=
-\frac{z^2\eps^2}{k_b^\eps}
\int_B
U({\widetilde x})\overline V({\widetilde x})\,\dd {\widetilde x}.
\end{aligned}
\]
Since
$ k_b^\eps=\bar k_b\eps^2, $
this becomes
\begin{align}\label{eq:18}
-\frac{z^2}{\eps}
\int_{D_\eps}
\frac1{k_b^\eps}
u_\eps(x)\overline{v_\eps}(x)\,\dd x
=
-\frac{z^2}{\bar k_b}
\int_B
U({\widetilde x})\overline V({\widetilde x})\,\dd {\widetilde x}.
\end{align}
Moreover, since 
\[
\begin{aligned}
\bigl(
\gamma_{\partial D_\eps}u_\eps
\bigr)(x)
&= 
\bigl(
\mathcal P_\eps^\partial\gamma_{\partial B}U
\bigr)(x),
\end{aligned}
\]
it can be seen that
\begin{align*}
&\mathfrak q_\eps(z;\gamma U,\gamma V)=-\frac1\eps
\left\langle
\mathcal N_\eps(z)
\gamma_{\partial D_\eps}u_\eps,
\gamma_{\partial D_\eps}v_\eps
\right\rangle_{
H^{-1/2}(\partial D_\eps),
H^{1/2}(\partial D_\eps)
}.
\end{align*}
Combining \eqref{eq:scaled_1}, \eqref{eq:17} and \eqref{eq:18} gives the identity \eqref{eq:variational_1}.
\end{proof}

\subsection{Identification of the Minnaert resonance} \label{sec:Ch_2}

In this section, we identify the Minnaert resonance by studying the moderate characteristic values of the operator family
\(\widehat {\mathcal T}_\eps(z)\). To do so, we decompose $H^1(B)$ into the constant mode and its
mean-zero complement.
Define
\[
m(u):=\frac{1}{|B|}\int_B u(\widetilde x)\,\dd {\widetilde x}, \qquad u\in H^1(B),
\]
and
\[
H_0
:=
\left\{
w\in H^1(B):
\int_B w(\widetilde x)\,\dd {\widetilde x}=0
\right\}.
\]
Then
\[
H^1(B)
=
\Span\{\one\}\oplus H_0,
\]
and every $u\in H^1(B)$ has the unique decomposition
\begin{align} \label{eq:dec}
u = m(u)\one + w,
\qquad
w\in H_0.
\end{align}
On $H_0$, we use the norm
\begin{align}\label{eq:5}
\|w\|_{H_0}:=\|\nabla w\|_{L^2(B)},
\end{align}
which is equivalent to the $H^1(B)$ norm by the Poincar\'e
inequality.

With the aid of the above decomposition \eqref{eq:dec}, it can be seen that the equation 
\[
\widehat T_\eps(z)u=0
\]
is equivalent to
\begin{align}
&\widehat t_\eps(z; m(u)\one + w,\one)=0, \label{eq:6}\\
\mathrm{and}\;
&\widehat t_\eps(z; m(u)\one + w,v)=0
\qquad
\mathrm{for}\;\mathrm{every}\; v\in H_0. \label{eq:7}
\end{align}
For $z\in \mathcal O$, we define
\begin{align}
&\mathfrak C_\eps(z): H_0\longrightarrow H_0,\notag \\
\mathrm{and}\; &\mathfrak L_\eps(z): H_0\longrightarrow\C\notag
\end{align}
by
\[
\left\langle
\mathfrak C_\eps(z)w,v 
\right\rangle_{H_0}
:=
\widehat t_\eps(z;w,v),
\qquad
w,\;v\in H_0,
\]
and 
\begin{align}\label{eq:32}
\mathfrak L_\eps(z)w
:=
\widehat t_\eps(z;w,\one),\quad w \in H_0,
\end{align}
respectively. Furthermore, we define
\begin{align}\label{eq:22}
\mathfrak A_\eps(z)
:=
\widehat t_\eps(z;\one,\one),
\end{align}
and denote
\(
\mathrm R_\eps(z)\in H_0
\)
by
\begin{align}\label{eq:33}
\left\langle
\mathrm R_\eps(z),v
\right\rangle_{H_0}
:=
\widehat t_\eps(z;\one,v),
\qquad v\in H_0,
\end{align}
Equations \eqref{eq:6}--\eqref{eq:7} are equivalent to 
\begin{align}
&m(u)\,\mathfrak A_\eps(z)+ \mathfrak L_\eps(z)w=0, \label{eq:8}\\
\mathrm{and}\;& \mathfrak C_\eps(z)w + m(u)\,\mathfrak R_\eps(z)=0. \label{eq:9}
\end{align}

We now investigate the invertibility of \(\mathfrak C_\eps\), for which we first establish a uniform bound for the scaled Dirichlet-to-Neumann form defined in \eqref{eq:scaled_2}.

\begin{lemma}
\label{lem:main-uniform-q}
Let $K\subset \mathcal O$ be compact. Then there exists $\eps_K>0$
such that
\[
|\mathfrak q_\eps(z;f,g)|
\le C_K
\|f\|_{H^{1/2}(\partial B)}
\|g\|_{H^{1/2}(\partial B)}
\]
for every $z\in K$, $0<\eps<\eps_K$, and
$f,g\in H^{1/2}(\partial B)$. Here, $C_K$ is a positive constant independent of $\eps_K$ and the $\operatorname{dist}(D_\eps,\Gamma)$.
\end{lemma}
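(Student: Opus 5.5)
We bound $\mathfrak q_\eps$ by writing the Dirichlet-to-Neumann pairing as an energy integral over the truncated exterior domain. We use a lifting of the boundary data adapted to the scale $\eps$, and then control the correction through the uniform inverse bound of Proposition~\ref{le:uniform-inverse}.

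\textbf{Step 1: lifting.} Let $f,g\in H^{1/2}(\partial B)$. Fix a bounded extension operator $E:H^{1/2}(\partial B)\to H^1(\R^3\setminus\overline B)$ with support in $\{|\widetilde x|<\rho_0\}$ for a fixed $\rho_0$. Define the physical liftings
\[
F_\eps:=(Ef)\Bigl(\tfrac{\cdot-x_\eps}{\eps}\Bigr),\qquad G_\eps:=(Eg)\Bigl(\tfrac{\cdot-x_\eps}{\eps}\Bigr).
\]
These are supported in $B_{\rho_0\eps}(x_\eps)$, which lies inside $B_{R-2}$ for small $\eps$. The support may straddle $\Gamma$ in the near-contact regime, but that does no harm: $F_\eps\in H^1$ across $\Gamma$, and no transmission condition is imposed on test functions. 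By scaling,
\begin{align*}
\|\nabla F_\eps\|_{L^2}^2&=\eps\|\nabla Ef\|^2_{L^2},\\
\|F_\eps\|^2_{L^2}&=\eps^3\|Ef\|^2_{L^2},
\end{align*}
and in particular $\|F_\eps\|_{H^1}\le C\eps^{1/2}\|f\|_{H^{1/2}(\partial B)}$.

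\textbf{Step 2: energy identity.} Write $U=U_\eps^{f_\eps}(z)$ with $f_\eps=\mathcal P^\partial_\eps f$, and decompose $U=F_\eps+W$ with $W\in V_\eps$ solving
\[
\mathfrak b_{\eps,z}(W,v)=-\mathfrak b_{\eps,z}(F_\eps,v)\qquad\text{for all } v\in V_\eps.
\]
Green's formula in $B_R\setminus\overline{D_\eps}$ uses the transmission conditions on $\Gamma$ and the exact DtN map on $\partial B_R$; the latter term vanishes because $F_\eps,G_\eps$ vanish near $\partial B_R$. Keeping track of the sign of $\nu_D$, which points into the exterior, this gives
\[
\langle\mathcal N_\eps(z)f_\eps,g_\eps\rangle=-\mathfrak b_{\eps,z}(F_\eps+W,\overline{\,\cdot\,}G_\eps),
\]
up to conjugation conventions. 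Since $\mathfrak b_{\eps,z}(F_\eps,G_\eps)$ involves only interior integrals, it is bounded by
\[
C\bigl(\|\nabla F_\eps\|\|\nabla G_\eps\|+|z|^2\|F_\eps\|\|G_\eps\|\bigr)\le C_K\,\eps\,\|f\|\|g\|.
\]
For the correction term, Proposition~\ref{le:uniform-inverse} gives
\[
\|W\|_{V_\eps}\le C_K\|\mathfrak b_{\eps,z}(F_\eps,\cdot)\|_{V_\eps^*}\le C_K\eps^{1/2}\|f\|.
\]
Hence $|\mathfrak b_{\eps,z}(W,G_\eps)|\le C_K\|W\|_{H^1}\|G_\eps\|_{H^1}\le C_K\eps\|f\|\|g\|$, where the DtN boundary term is again zero. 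Dividing by $\eps$ yields the claimed bound.

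\textbf{Main obstacle.} The delicate point is that the constant is independent of $\operatorname{dist}(D_\eps,\Gamma)$ and $\eps$. This rests entirely on the uniform inverse bound, whose constant $C_K$ is uniform in the geometry. It also needs the norm on $V_\eps$ to be the plain $H^1(B_R\setminus\overline{D_\eps})$ norm and the operator norm to be measured in the dual pairing consistently; I would check that the operator induced on $V_\eps$ via the Riesz map satisfies $\|\mathfrak A_\eps^{-1}\|$ equal to the inf-sup constant.

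A second, more subtle issue is that the $\eps^{1/2}$ bound on $W$ uses only the $H^1$ norm, which loses nothing because $\|F_\eps\|_{H^1}\sim\eps^{1/2}$ is dominated by the gradient. If the uniform estimate were only available in a weaker norm, one would instead test with $W$ itself and use a Gårding-type inequality, treating the $z^2$ term as a compact perturbation. The near-contact case adds nothing here, since the lifting is insensitive to $\Gamma$.
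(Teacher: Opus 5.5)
Your proposal is correct and follows essentially the same route as the paper. Both arguments use a scaled, compactly supported lifting with $H^1$ norm $O(\eps^{1/2})$, control the $V_\eps$-correction through the uniform inverse bound of Proposition~\ref{le:uniform-inverse}, and apply Green's formula against the lifting of $g$ (which removes the $\partial B_R$ term) to obtain $\eps|\mathfrak q_\eps|\le C_K\eps\|f\|\|g\|$. The only difference is cosmetic: you split $U=F_\eps+W$ inside the final pairing, whereas the paper first bounds $\|U\|_{H^1}\le C_K\eps^{1/2}\|f\|$ and then pairs with $L_\eps g$.
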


\begin{proof}
We choose a bounded $C^2-$ smooth domain $B^\sharp$ with
$\overline B\subset B^\sharp$. We first claim that there exists a bounded right inverse of the trace,
\[
L:H^{1/2}(\partial B)
\longrightarrow H^1(B^\sharp\setminus\overline B),
\]
such that
\[
\gamma_{\partial B}Lf=f,
\qquad
\gamma_{\partial B^\sharp}Lf=0,
\qquad
\|Lf\|_{H^1}\le C_{B,B^{\sharp}}\|f\|_{H^{1/2}(\partial B)}.
\] 
For each $f \in H^{1/2}(\partial B)$, we define
\[
[L_\eps] f(x)
:=\begin{cases}
[Lf]\!\left(\frac{x-x_\eps}{\eps}\right), \quad\; & \mathrm{in}\; \left\{y\in\R^3 : y = x_\eps +\eps(B^\sharp\setminus\overline B)\right\}.\\
0, & \mathrm{else}
\end{cases}
\]
Therefore, it can be seen that 
\begin{align*}
&L_\eps:H^{1/2}(\partial B)\longrightarrow H^1(B_R \backslash \overline D_\eps)
\end{align*}
{and} that 
\begin{align*}
\gamma_{\partial D_\eps}L_\eps f= f\left(\frac{x-x_\eps}{\eps}\right),
\qquad
\gamma_RL_\eps f=0.
\end{align*}
Since
\[
\|L_\eps f\|_{L^2}^2
=\eps^3\|Lf\|_{L^2}^2,
\qquad
\|\nabla L_\eps f\|_{L^2}^2
=\eps\|\nabla Lf\|_{L^2}^2,
\]
we readily obtain that 
\begin{align} \label{eq:11}
\|L_\eps f\|_{H^1(B_R\backslash \overline{D_\eps})}
\le C\eps^{1/2}\|f\|_{H^{1/2}(\partial B)}.
\end{align}
The constant is independent of $\operatorname{dist}(D_\eps,\Gamma)$.

On the other hand, let
$U_{\eps,z}^f$ be the outgoing exterior solution with
$U_{\eps,z}^f = f\left({(x-x_\eps)}/{\eps}\right)$ on $\partial D_\eps$, then there exists $v_{\eps,z}\in V_\eps$ such that 
\[
U_{\eps,z}^f = L_\eps f + v_{\eps,z},
\]
and that 
\begin{align*}
 \mathfrak b_{\eps,z}
\bigl(v_{\eps,z}, h \bigr) = \int_{B_R\setminus\overline{D_\eps}}-
\bigl((\rho^0)^{-1}\nabla (L_\eps f)\cdot\nabla\overline h - z^2 (k^{(0)})^{-1}\,(L_\eps f)\overline h\bigr)(x)\,\dd x, \quad h \in V_\eps.
\end{align*}
Here, the sesqulilnear form \(\mathfrak b_{\eps,z}\) is specified by \eqref{eq:truncated-form}. Therefore, using \eqref{eq:11}, we have 
\[
|\langle \mathfrak A_\eps(z)v_{\eps,z}, h \rangle_{V_\eps}| \le C_K \eps^{1/2} \|h\|_{V_\eps} \|f\|_{H^{1/2}(\partial B)}.
\]
This implies
\begin{align*}
   \|\mathfrak A_\eps(z)v_{\eps,z}\|_{{H^1(B_R\backslash \overline{D_\eps})}} \le C_K \eps^{1/2} \|f\|_{H^{1/2}(\partial B)}
\end{align*}
In conjunction with Proposition \ref{le:uniform-inverse}, we arrive at 
\[
\|v_{\eps,z}\|_{H^1(B_R\backslash \overline{D_\eps})}
\le C_K\eps^{1/2}
\|f\|_{H^{1/2}(\partial B)}.
\]
Consequently,
\[
\|U_{\eps,z}^f\|_{H^1(B_R\backslash \overline{D_\eps})}
\le C_K\eps^{1/2}
\|f\|_{H^{1/2}(\partial B)}.
\]
Since $L_\eps g$ vanishes on $\partial B_R$, Green's formula gives
\[
\eps \mathfrak q_\eps(z;f,g)
= \int_{B_R\setminus\overline{D_\eps}}
\bigl((\rho^0)^{-1}\nabla (U_{\eps,z}^f)\cdot\nabla\overline {L_\eps g} - z^2 (k^{(0)})^{-1}\,(U_{\eps,z}^f)\overline {L_\eps g}\bigr)(x)\,\dd x.
\]
Therefore
\begin{align*}
\eps|\mathfrak q_\eps(z;f,g)|
&\le C_K
\|U_{\eps,z}^f\|_{H^1(B_R\backslash \overline{D_\eps})}
\|L_\eps g\|_{H^1(B_R\backslash \overline{D_\eps})}\\
&\le C_K\eps
\|f\|_{H^{1/2}(\partial B)}
\|g\|_{H^{1/2}(\partial B)},
\end{align*}
which proves the assertion.

\end{proof}

Building upon Lemma \ref{lem:main-uniform-q}, we are now ready to show the invertibility of \(\mathfrak C_\eps\) for sufficiently small \(\eps\) in the following lemma.

\begin{lemma}
\label{lem:mean-zero-invertibility}
For every compact set
$K\subset  \mathcal O$, there exists $\eps_K>0$ such that,
for every
\( z\in K,\; 0<\eps<\eps_K,
\)
the operator
\(
\mathfrak C_\eps(z):H_0\longrightarrow H_0
\)
is invertible and satisfies
\[
\left\|
\mathfrak C_\eps(z)^{-1}
\right\|_{\mathcal L(H_0)}
\le
C_K\eps^2.
\]
Here, $C_K$ is a positive constant independent of $\eps_K$ and the $\operatorname{dist}(D_\eps,\Gamma)$.
\end{lemma}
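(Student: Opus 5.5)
We prove the bound by showing that $\mathfrak C_\eps(z)$ is a small perturbation of a coercive operator of size $\eps^{-2}$ on $H_0$, and then inverting it by a Neumann series (equivalently, by Lax--Milgram). By Lemma \ref{le:1}, for $w,v\in H_0$ we have
\[
\langle \mathfrak C_\eps(z)w,v\rangle_{H_0}
=
\frac1{\bar\rho_b\eps^2}\int_B\nabla w\cdot\nabla\overline v\,\dd\widetilde x
-\frac{z^2}{\bar k_b}\int_B w\overline v\,\dd\widetilde x
+\mathfrak q_\eps(z;\gamma w,\gamma v).
\]
Because the norm on $H_0$ is $\|\nabla\cdot\|_{L^2(B)}$ by \eqref{eq:5}, the first term is exactly $(\bar\rho_b\eps^2)^{-1}\langle w,v\rangle_{H_0}$. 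We therefore write
\[
\mathfrak C_\eps(z)=\frac{1}{\bar\rho_b\eps^2}\bigl(I+\bar\rho_b\eps^2\,\mathfrak E_\eps(z)\bigr),
\]
where $\mathfrak E_\eps(z)\in\mathcal L(H_0)$ is the operator induced by the remaining two terms.

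\emph{Step 1: bounding the perturbation.} The mass term is controlled using $|z|\le \max_K|z|$ together with the Poincar\'e inequality on $H_0$, which gives $\|w\|_{L^2(B)}\le C_P\|w\|_{H_0}$. For the boundary term we invoke Lemma \ref{lem:main-uniform-q}: for $z\in K$ and $0<\eps<\eps_K$,
\[
|\mathfrak q_\eps(z;\gamma w,\gamma v)|\le C_K\|\gamma w\|_{H^{1/2}(\partial B)}\|\gamma v\|_{H^{1/2}(\partial B)}.
\]
The trace theorem on the $C^2$ domain $B$, combined with the equivalence of the $H^1(B)$ and $H_0$ norms (Poincar\'e again), turns the right-hand side into $C\|w\|_{H_0}\|v\|_{H_0}$. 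Together these give $\|\mathfrak E_\eps(z)\|_{\mathcal L(H_0)}\le M_K$. The constant $M_K$ depends only on $K$, $B$, $\bar k_b$ and the constant of Lemma \ref{lem:main-uniform-q}, and is therefore independent of $\eps$ and of $\operatorname{dist}(D_\eps,\Gamma)$.

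\emph{Step 2: Neumann series.} Shrink $\eps_K$ so that $\bar\rho_b\eps_K^2M_K\le 1/2$. Then $I+\bar\rho_b\eps^2\mathfrak E_\eps(z)$ is invertible with inverse of norm at most $2$. Consequently $\mathfrak C_\eps(z)$ is invertible and
\[
\|\mathfrak C_\eps(z)^{-1}\|_{\mathcal L(H_0)}\le 2\bar\rho_b\eps^2,
\]
which is the claimed estimate with $C_K=2\bar\rho_b$. If one prefers a Lax--Milgram argument, the same bound follows from
\[
\operatorname{Re}\langle\mathfrak C_\eps w,w\rangle\ge\bigl((\bar\rho_b\eps^2)^{-1}-M_K\bigr)\|w\|_{H_0}^2.
\]

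\emph{Main difficulty.} The only nontrivial ingredient is the uniformity of the bound on $\mathfrak q_\eps$ with respect to both $\eps$ and the separation from $\Gamma$. This uniformity is exactly what Lemma \ref{lem:main-uniform-q} provides, and it rests in turn on Proposition \ref{le:uniform-inverse}. Once these results are taken as given, the argument above is purely perturbative; the remaining care is to confirm that every constant (trace, Poincar\'e, and $\sup_K|z|$) depends only on $K$ and $B$.
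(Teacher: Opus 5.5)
Your proposal is correct and is essentially the paper's argument. The paper bounds the mass term and the boundary form $\mathfrak q_\eps$ by $C_K\|\nabla w\|_{L^2(B)}^2$ via Lemma \ref{lem:main-uniform-q}, obtains $\operatorname{Re}\langle\mathfrak C_\eps(z)w,w\rangle_{H_0}\ge (2\bar\rho_b\eps^2)^{-1}\|w\|_{H_0}^2$ for small $\eps$, and concludes by Lax--Milgram with the same constant $2\bar\rho_b$; this is exactly your alternative route, and your Neumann-series phrasing is equivalent, with your explicit trace/Poincar\'e step usefully filling in the passage from $H^{1/2}(\partial B)$ norms to $\|\cdot\|_{H_0}$ that the paper leaves implicit.
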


\begin{proof}
It follows from Lemma \ref{lem:main-uniform-q} and \eqref{eq:5} that there exist $\eps_K>0$ such that
\[ 
\left|
-\frac{z^2}{\bar k_b}
\int_B |w(\widetilde x)|^2\,\dd {\widetilde x}
\right|
\le
C_K\|\nabla w\|_{L^2(B)}^2,\qquad \mathrm{uniformly\; for}\; z\in K,\;\; \mathrm{and}\; \eps\in(0,\eps_K).
\]
This, together with the definition of the operator $\mathfrak C_\eps(z)$ gives
\[
\operatorname{Re}
\left\langle
\mathfrak C_\eps(z)w,w
\right\rangle_{H_0}
\ge
\left(
\frac{1}{\bar\rho_b\eps^2}
-
C_K
\right)
\|\nabla w\|_{L^2(B)}^2.
\]
Therefore, for sufficiently small $\eps$,
\[
\operatorname{Re}
\left\langle
\mathfrak C_\eps(z)w,w
\right\rangle_{H_0}
\ge
\frac{1}{2\bar\rho_b\eps^2}
\|w\|_{H_0}^2.
\]
Using the complex Lax--Milgram lemma, we readily obtain the invertibility of
$\mathfrak C_\eps(z)$, together with
\[
\left\|
\mathfrak C_\eps(z)^{-1}
\right\|_{\mathcal L(H_0)}
\le
2\bar\rho_b\eps^2, \qquad \mathrm{uniformly\; for}\; z\in K,\;\; \mathrm{and}\; \eps\in(0,\eps_K).
\]
The proof of this lemma is thus completed.
\end{proof}

We conclude this section with an alternative characterization of resonances in a given compact region.

\begin{lemma}
\label{le:exact-schur}
For every compact set
$K\subset  \mathcal O$, there exists $\eps_K>0$ such that,
for every
\( z\in K,\; 0<\eps<\eps_K,
\) the following arguments hold true. 

\begin{enumerate}[(a)]
\item \label{a1}
$z \in K \{0\}$ is a resonance of the full acoustic transmission problem if and only if 
\begin{align}
F_\eps(z)=0. \notag
\end{align}
Here, 
\begin{align}
F_\eps(z)
:=
\mathfrak A_\eps(z)
-
\mathfrak L_\eps(z)
\mathfrak C_\eps(z)^{-1}
\mathfrak R_\eps(z). \notag
\end{align}
\item \label{a2} We have
\[
\sup_{z\in K}
\left|
F_\eps(z)- \mathfrak A_\eps(z)
\right|
\le C_K\eps^2.
\]
Here, $C_K$ is a positive constant independent of $\eps_K$ and $\operatorname{dist}(D_\eps,\Gamma)$.
\end{enumerate}
\end{lemma}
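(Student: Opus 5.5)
Part (a) is a Schur-complement argument built on the decomposition \eqref{eq:dec}, and part (b) follows from the bound on $\mathfrak C_\eps(z)^{-1}$ in Lemma~\ref{lem:mean-zero-invertibility} once the off-diagonal terms are shown to be uniformly bounded.

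For (a), fix $K\subset\mathcal O$ compact. Take $\eps_K$ to be the minimum of the thresholds in Proposition~\ref{le:uniform-inverse}, Lemma~\ref{lem:main-uniform-q} and Lemma~\ref{lem:mean-zero-invertibility}. Then $K\subset\mathcal U_\eps$, so Proposition~\ref{pro:equi} together with \eqref{eq:10} shows that $z\in K\setminus\{0\}$ is a resonance if and only if $\ker\widehat{\mathcal T}_\eps(z)\neq\{0\}$. By \eqref{eq:dec}, this kernel condition is equivalent to the system \eqref{eq:8}--\eqref{eq:9} having a solution $(m(u),w)\neq(0,0)$.

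Since $\mathfrak C_\eps(z)$ is invertible, \eqref{eq:9} gives $w=-m(u)\,\mathfrak C_\eps(z)^{-1}\mathfrak R_\eps(z)$. In particular $m(u)=0$ forces $w=0$, so any nontrivial solution has $m(u)\neq0$. Substituting into \eqref{eq:8} gives $m(u)F_\eps(z)=0$, hence $F_\eps(z)=0$. Conversely, if $F_\eps(z)=0$, set $m=1$ and $w=-\mathfrak C_\eps(z)^{-1}\mathfrak R_\eps(z)$. This pair solves both equations and is nonzero, so $z$ is a resonance.

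For (b), we have
\[
|F_\eps(z)-\mathfrak A_\eps(z)|\le \|\mathfrak L_\eps(z)\|_{H_0'}\,\|\mathfrak C_\eps(z)^{-1}\|\,\|\mathfrak R_\eps(z)\|_{H_0}.
\]
Lemma~\ref{lem:mean-zero-invertibility} bounds the middle factor by $C_K\eps^2$, so it suffices to show that $\mathfrak L_\eps$ and $\mathfrak R_\eps$ are bounded uniformly in $z\in K$ and $\eps$. Evaluate them with Lemma~\ref{le:1}, using $\nabla\one=0$:
\[
\mathfrak L_\eps(z)w=-\frac{z^2}{\bar k_b}\int_B w\,\dd\widetilde x+\mathfrak q_\eps(z;\gamma w,1),
\qquad
\langle\mathfrak R_\eps(z),v\rangle_{H_0}=-\frac{z^2}{\bar k_b}\int_B\overline v\,\dd\widetilde x+\mathfrak q_\eps(z;1,\gamma v).
\]
The singular gradient term $\eps^{-2}\int_B\nabla\cdot\nabla$ drops out because one argument is constant. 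The volume integrals vanish for $w,v\in H_0$, since these have zero mean.

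The remaining terms are controlled by Lemma~\ref{lem:main-uniform-q} and the trace inequality:
\[
|\mathfrak q_\eps(z;\gamma w,1)|\le C_K\|\gamma w\|_{H^{1/2}(\partial B)}\|1\|_{H^{1/2}(\partial B)}\le C_K\|w\|_{H^1(B)}\le C_K\|w\|_{H_0},
\]
where the last step is the Poincar\'e inequality on $H_0$. The same bound holds for $\mathfrak R_\eps$. All these constants are independent of $\eps$ and of $\operatorname{dist}(D_\eps,\Gamma)$, because the only $\eps$-dependent ingredient is Lemma~\ref{lem:main-uniform-q}, which is already uniform in the separation. This yields the $C_K\eps^2$ bound.

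The only delicate point is this uniform boundedness of the off-diagonal terms. It rests entirely on Lemma~\ref{lem:main-uniform-q}, and so ultimately on Proposition~\ref{le:uniform-inverse}. The rest of the argument is algebraic bookkeeping.
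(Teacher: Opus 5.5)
Your proposal is correct and follows the paper's proof essentially step by step. In part (a) you eliminate $w$ through \eqref{eq:9} using the invertibility from Lemma~\ref{lem:mean-zero-invertibility}, as the paper does. In part (b) you use the same product bound $\|\mathfrak L_\eps(z)\|\,\|\mathfrak C_\eps(z)^{-1}\|\,\|\mathfrak R_\eps(z)\|$, with the off-diagonal terms controlled by Lemma~\ref{lem:main-uniform-q}.

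You are in fact slightly more explicit than the paper on two points. You check the converse direction by taking $m=1$ and $w=-\mathfrak C_\eps(z)^{-1}\mathfrak R_\eps(z)$. You also show that only the $\mathfrak q_\eps$ terms survive in $\mathfrak L_\eps$ and $\mathfrak R_\eps$, because $\nabla\one=0$ and elements of $H_0$ have zero mean.
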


\begin{proof}
\eqref{a1} With the aid of \eqref{eq:10}, \eqref{eq:9} and Lemma \ref{lem:mean-zero-invertibility}, for $u \in \Ker\widehat T_\eps(z)$ we obtain that it admits the decomposition \eqref{eq:dec}, 
\[
w =- m(u)\,\mathfrak C_\eps(z)^{-1}\mathfrak R_\eps(z).
\]
Substituting the above identity into \eqref{eq:8} gives
\[
m(u)F_\eps(z)=0.
\]
If $m(u)=0$, then
\[
\mathfrak C_\eps(z)w=0,
\]
and hence $W=0$. Thus a nonzero kernel element necessarily satisfies
$m(u)\neq0$. Consequently, we readily obtain
\[
\Ker\widehat T_\eps(z)\neq\{0\}
\quad\Longleftrightarrow\quad
F_\eps(z)=0.
\]
This, together with \eqref{eq:10} and Proposition \ref{pro:equi} yields the statement.
 
\eqref{a2}
With the aid of \eqref{eq:32} and \eqref{eq:33}, it follows from Lemma \ref{lem:main-uniform-q} that there exist $\eps_K>0$ and $C_K>0$ such that,
for every
\( z\in K\) and \(0<\eps<\eps_K,
\)
\[
\|\mathfrak R _\eps(z)\|_{H_0}
+
\|\mathfrak L_\eps(z)\|_{\mathcal L(H_0,\C)}
\le C_K.
\]
This, together with Lemma \ref{lem:mean-zero-invertibility} yields that for every
\( z\in K,\; 0<\eps<\eps_K,
\)
\begin{align*}
|F_\eps(z) - \mathfrak A_\eps(z)|
&=
\left|
\mathfrak L_\eps(z)
\mathfrak C_\eps(z)^{-1}
\mathfrak R_\eps(z)
\right|
\le C_K\eps^2.
\end{align*}
The proof of this lemma is thus completed. 
\end{proof}

\section{Proof of Theorem \ref{th:main}} \label{sec:p}

This section is devoted to proving Theorem \ref{th:main}. To this end, we prepare several lemmas.

\begin{lemma}
\label{le:main-D2}
Let $\eps > 0$. The following arguments hold true.
\begin{enumerate}[(a)]
\item \label{d1} Let $P_\eps$ denote the static capacitary potential of $D_\eps$ in
the background, which is a solution of 
\begin{align*}
&\nabla\cdot
\left(
\frac1{\rho^0}\nabla P_\eps
\right)
=0\; \qquad\qquad\qquad\quad\; \mathrm{in}\; \R^3 \backslash \overline{ D_\eps},\\
&[P_\eps]_\Gamma=0,
\qquad
\left[
\frac1{\rho^0}
\partial_{\nu_\Gamma}P_\eps
\right]_\Gamma
=0
\qquad
\mathrm{on}\; \Gamma,\\
& P_\eps^{\mathrm{int}}
 = P_\eps^{\mathrm{ext}}
\qquad\qquad\qquad\qquad\qquad\;\; \mathrm{on}\; \partial D_\eps,\\
& P_\eps = 1 \qquad\qquad\qquad\qquad\qquad\qquad\;\;\; \mathrm{in}\; D_\eps,\;\\
&\lim_{|x|\rightarrow +\infty} P_\eps(x) = 0.
\end{align*}
We have
\begin{align}\label{eq:41}
\int_{B_R\setminus  \overline{D_\eps}}|P_\eps(x)|^2\,\dd x 
\le C\eps^2, \qquad \mathrm{as}\; \eps\to 0.
\end{align}
Moreover, if \(A\) is a fixed annulus such that $ B_{R-2} \subset A \subset B_R$, then
\begin{align}\label{eq:42}
\|P_\eps\|_{H^1(A)}\le C\eps, \qquad \mathrm{as}\; \eps\to 0.
\end{align}

\item \label{d2} Let $K\in \mathcal O$. We have 
\begin{align} \label{eq:55}
\sup_{z\in K}
|\mathfrak q_\eps(z;\one,\one)- \mathfrak q_\eps(0;\one,\one)|
 \le C\eps \qquad \mathrm{as}\; \eps\to 0.
\end{align}
\end{enumerate}

Here, $C$ is a positive constant independent of \(\eps\) and
$\dist(D_\eps,\Gamma)/\eps$.
\end{lemma}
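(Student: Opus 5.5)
Part \eqref{d1} will rest on the variational characterization \eqref{eq:phy} together with a comparison to the homogeneous Newtonian potential. First, a uniform energy bound: since $\rho^0$ takes only the two values $\rho_\pm$, the weight in \eqref{eq:phy} is pinched between $\min\rho_\pm^{-1}$ and $\max\rho_\pm^{-1}$. Hence $\|\nabla P_\eps\|_{L^2}^2\le C\,\icap_{\Newt}(D_\eps)=C\eps\,\icap_{\Newt}(B)$, with $C$ depending only on $\rho_\pm$; in particular $C$ does not depend on the position of $D_\eps$ relative to $\Gamma$. Next, the maximum principle for the divergence-form transmission operator gives $0\le P_\eps\le1$. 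To get pointwise decay I will use a comparison argument or De Giorgi--Nash--Moser/Green function bounds for $\nabla\cdot(\rho_0^{-1}\nabla\cdot)$. The Green function of the piecewise-constant operator satisfies two-sided bounds $G(x,y)\simeq |x-y|^{-1}$ (Littman--Stampacchia--Weinberger), with constants depending only on the ellipticity ratio. Representing $P_\eps(x)=\int_{\partial D_\eps}G(x,y)\,\sigma_\eps(y)\,dS(y)$ with the equilibrium measure $\sigma_\eps\ge0$ of total mass $\icap^{\mathrm{phys}}(D_\eps)\le C\eps$, I obtain
\[
P_\eps(x)\le \min\Bigl\{1,\;\frac{C\eps}{\operatorname{dist}(x,D_\eps)}\Bigr\}.
\]
Squaring and integrating over $B_R\setminus\overline{D_\eps}$ gives $\int_{B_R}\min\{1,C\eps^2|x-x_\eps|^{-2}\}\,dx\le C\eps^2$ (the contribution near $D_\eps$ is $O(\eps^3)$), which proves \eqref{eq:41}. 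For \eqref{eq:42}, on the annulus $A$ the point $x$ stays at distance $\ge1$ from $\Omega\supset D_\eps$ (taking $A$ outside $B_{R-2}$, as intended). There $P_\eps\le C\eps$, and $P_\eps$ solves a constant-coefficient Laplace equation in a neighborhood of $A$, since $\Gamma\subset B_{R-2}$. Interior Caccioppoli estimates on a slightly larger annulus then give $\|\nabla P_\eps\|_{L^2(A)}\le C\eps$.

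For \eqref{d2}, I write $\eps\,\mathfrak q_\eps(z;\one,\one)$ via Green's formula, as in the proof of Lemma \ref{lem:main-uniform-q}, as a bounded-domain form. I then compare $U_\eps(z):=U_\eps^{\one}(z)$ with $P_\eps$. Set $W_\eps:=U_\eps(z)-\chi P_\eps$, where $\chi$ is a cutoff equal to $1$ on $B_{R-2}$ and supported in $B_R$ with $\nabla\chi$ supported in $A$. Then $W_\eps\in V_\eps$ and
\[
\mathfrak b_{\eps,z}(W_\eps,h)=-\mathfrak b_{\eps,z}(\chi P_\eps,h).
\]
Using the equation for $P_\eps$, the right side reduces to $z^2\int(k^0)^{-1}\chi P_\eps\bar h$ plus commutator terms supported in $A$ plus the $\Lambda_R$ term, which vanishes because $\chi=0$ on $\partial B_R$. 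By \eqref{eq:41} and \eqref{eq:42} this is $O(\eps)\|h\|$, so Proposition \ref{le:uniform-inverse} yields $\|W_\eps\|_{H^1}\le C\eps$. Finally,
\[
\eps\bigl(\mathfrak q_\eps(z;\one,\one)-\mathfrak q_\eps(0;\one,\one)\bigr)
\]
is the difference of fluxes of $U_\eps(z)$ and $P_\eps$ on $\partial D_\eps$. I test the weak equations against $\chi P_\eps$ (which equals $1$ on $\partial D_\eps$) to get
\[
\mathfrak b_{\eps,z}(U_\eps,\chi P_\eps)-\mathfrak b_{\eps,0}(P_\eps,\chi P_\eps)
=\int\rho_0^{-1}\nabla W_\eps\cdot\nabla(\chi P_\eps)-z^2\int (k^0)^{-1}U_\eps\chi P_\eps+\text{(terms on }A).
\]
Here I need to be more careful. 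Naive Cauchy--Schwarz gives $\|\nabla W_\eps\|\,\|\nabla P_\eps\|\le C\eps\cdot\eps^{1/2}$, which is $O(\eps^{3/2})$, and after dividing by $\eps$ this is $O(\eps^{1/2})$ rather than $O(\eps)$. I will therefore instead use symmetry: test the $W_\eps$ equation with $\chi P_\eps$ itself. Then $\int\rho_0^{-1}\nabla W_\eps\cdot\nabla P_\eps$ becomes a boundary flux of $P_\eps$ against $W_\eps$, and this vanishes on $\partial D_\eps$ because $W_\eps=0$ there. What remains are $L^2$ products $\int U_\eps P_\eps$ and $\int P_\eps^2$, which are $O(\eps^2)$ by \eqref{eq:41} and $\|U_\eps\|_{L^2}\le C\eps$, together with terms on $A$ that are $O(\eps^2)$. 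Dividing by $\eps$ gives \eqref{eq:55}.

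The main obstacle is the uniform pointwise decay $P_\eps\le C\eps/\operatorname{dist}(x,D_\eps)$ independent of $\Theta_\eps$, which is exactly where the Green function bounds for discontinuous coefficients are needed, and then obtaining the sharp $O(\eps^2)$ in \eqref{d2} through the symmetric testing trick.
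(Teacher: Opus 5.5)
Your proposal is correct and follows essentially the paper's proof. In part (a) you use the same ingredients: the $O(\eps)$ capacity bound, the nonnegative-density Green representation with $G_{\rho^0}(x,y)\le C|x-y|^{-1}$ (justified via the equilibrium measure and Littman--Stampacchia--Weinberger rather than Hopf's lemma), and the Caccioppoli step on an annulus outside $\overline\Omega$. In part (b) your $W_\eps=U^{\one}_\eps(z)-\chi P_\eps$ is the paper's $-V^1_{\eps,z}$, bounded by $C\eps$ through Proposition~\ref{le:uniform-inverse}, and your symmetric-testing step is exactly the cancellation behind the paper's estimates \eqref{eq:49} and \eqref{eq:51}: pairing $\nabla(\chi P_\eps)$ with $\nabla v$, $v\in V_\eps$, reduces via the transmission equation for $P_\eps$ to annulus and $L^2$ terms of size $O(\eps\|v\|)$.
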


\begin{proof}
Let $\eps >0$ be sufficiently small throughout the proof.

\eqref{d1}
It is known that 
\[
\icap^{\mathrm{phys}}(D_\varepsilon)
=
\int_{\R^3\setminus \overline{D_\eps}} \frac{1}{\rho^{(0)}(x)}|\nabla P_\eps(x)|^2\,\dd x.
\]
We now claim that there exists $C>0$, independent of the scaled gap and the small parameter $\eps$, such that
\begin{align} \label{eq:31}
\icap^{\mathrm{phys}}(D_\eps)\le C\eps, \quad \mathrm{as}\;\; \eps \rightarrow 0.
\end{align}
Choose $\chi\in C_c^\infty(\R^3)$ equal to one on a fixed ball
containing $D$, with $\nabla\chi$ supported in a
fixed annulus $A$. By a straightforward calculation, we have
\begin{align}
\int_{\R^3\setminus  \overline{D_\eps}}\frac{1}{\rho^{(0)}(x)}|\nabla \chi\left((x-x_\eps)/\eps\right)|^2\,\dd x = \eps\int_{\R^3\backslash \overline B} \frac{1}{\rho^{(0)}(y)}|\nabla \chi \left(y\right)|^2\,\dd y \le C_\chi \eps. \notag
\end{align}
This, together with \eqref{eq:phy} shows that \eqref{eq:31} holds. 

Let $G_{\rho^0}(x,y)$ be the positive whole-space Green function of
$-\nabla\cdot(1/\rho^0\nabla)$.
Using the Green representation formula, we have the representation of $P_\eps$
\begin{align}\label{eq:38}
P_\varepsilon(x)
=
-\int_{\partial D_\varepsilon}
G_{\rho^{(0)}}(x,y)\,
(\rho^0)^{-1}\partial_{\nu_D}P_\varepsilon(y)\,dS(y),
\end{align}
By the maximum principle,
\begin{align} \label{eq:44}
0<P_\varepsilon<1
\qquad\text{in }\mathbb R^3\setminus\overline{D_\varepsilon}.
\end{align}
Since \(\partial D_\varepsilon\) is smooth, Hopf's
lemma (see, e.g.,\cite[Theorem 2.5]{HL}) yields
\begin{align}\label{eq:39}
\partial_{\nu_D}P_\varepsilon<0
\qquad\text{on }\partial D_\varepsilon.
\end{align}
Furthermore, 
\begin{align*}
\int_{B_R\setminus D_\eps}|P_\eps(x)|^2\,\dd x = \int_{B_{c_B\eps}(x_\eps)}|P_\eps(x)|^2\,\dd x + \int_{B_R \backslash \overline{B_{c_B\eps}(x_\eps)}}|P_\eps(x)|^2\,\dd x =: I^{(1)}_\eps +  I^{(2)}_\eps.
\end{align*}
Here, \(c_B>0\) is chosen such that \(c_B\) is equal to twice the diameter of \(B\). Clearly,
\(D_\eps\subset B_{c_B\eps}(x_\eps)\). With the aid of \eqref{eq:44}, we have 
\begin{align}\label{eq:34}
    I^{(1)}_\eps \le C\eps^3.
\end{align}
For the estimate \(I^{(2)}_\eps\), we note that 
\begin{align}\label{eq:green}
0<G_{\rho^0}(x,y)\le \frac{C}{|x-y|}, \qquad \mathrm{for}\; y\in D_\eps\; \mathrm{and}\; x\in B_R \backslash  \overline{B_{c_B\eps}(x_\eps)}.
\end{align}
By the choice of \(c_B\), we have 
\[
|x-y|\ge|x-x_\eps| - |y-x_\eps| \ge \frac12|x-x_\eps|
\qquad \mathrm{for}\; y\in D_\eps\; \mathrm{and}\; x\in B_R \backslash \overline{B_{c_B\eps}(x_\eps)}.
\]
Combining this with \eqref{eq:31}, \eqref{eq:38}, \eqref{eq:39} and \eqref{eq:green} gives
\begin{align} \label{eq:43}
P_\eps(x)
\le
\frac{C}{|x-x_\eps|}
\icap_\Gamma(D_\eps)
\le C\frac{\eps}{|x-x_\eps|}.
\end{align}
Thus, we have
\[
I^{(2)}_\eps \le C\int_{c_B\eps}^{C_M}
\left(\frac{\eps}{r}\right)^2r^2\,\dd r
\le C\eps^2.
\]
This, together with \eqref{eq:34} gives \eqref{eq:41}.

Furthermore, on a slightly larger fixed annulus $A'$ with $B_R\supset A'\supset\overline A$, the
pointwise estimate \eqref{eq:43} gives
\[
\|P_\eps\|_{L^2(A')}\le C\eps.
\]
Since $P_\eps$ is harmonic in $A'$, it follows from
Caccioppoli's inequality (see, e.g.,\cite[Lemma 1.36]{HL}) that
\[
\|\nabla P_\eps\|_{L^2(A)}
\le C\|P_\eps\|_{L^2(A')}
\le C\eps.
\]
Combining this with \eqref{eq:41} gives \eqref{eq:42}.

\eqref{d2}
Choose a cutoff real-valued function \(\widetilde \chi \in C^{\infty}_c(B_R)\)  equal to one on a fixed ball
containing $D$, with $\nabla \widetilde \chi$ supported in a
fixed annulus $A$. Define $W_{\eps}:= \widetilde \chi P_\eps$. By a straightforward calculation, we have that: for each \(v \in V_\eps\),
\begin{align*}
\int_{B_R\backslash \overline{D_\eps}}\nabla W_\eps(x) \cdot \nabla v(x) dx = \int_{B_R\backslash \overline{D_\eps}}\widetilde \chi(x)\nabla P_\eps(x) \cdot \nabla v(x) dx + \int_{B_R\backslash \overline{D_\eps}}  P_\eps(x) \nabla \widetilde \chi (x) \cdot \nabla v(x) dx. 
\end{align*}
Since $P_\eps$ is harmonic in $B_R$, it can be deduced from \eqref{eq:42} and \eqref{eq:43} that 
\begin{align} \label{eq:46}
 \left|\int_{B_R\backslash \overline{D_\eps}}\nabla W_\eps(x) \cdot \nabla v(x) dx \right| \le C\eps\|v\|_{V_\eps}.    
\end{align}
Furthermore, using \eqref{eq:42} again, we have
\begin{align}\label{eq:47}
z^2\int_{B_R}|\chi(x) P_\eps(x)|^2dx \le C_K\eps^2, \quad z \in K.
\end{align}
In conjunction with \eqref{eq:46} and \eqref{eq:47}, we arrive at 
\begin{align}\label{eq:49}
\left|\widetilde{\mathfrak b}_{\eps,z}(v, W_\eps)\right| \le C \eps \|v\|_{V_\eps}, \quad v\in V_\eps.
\end{align}
Here,
\[
\widetilde{\mathfrak b}_{\eps,z}(v, W_\eps):= \int_{B_R\setminus\overline{D_\eps}}
\bigl((\rho^0)^{-1}\nabla v \cdot\nabla\overline{W_\eps} - z^2(k^{(0)})^{-1}\, v \overline{W_\eps} \bigr)(x)\,\dd x, \qquad v\in V_\eps.
\]

Let $U_{\eps,z}^1$ be the outgoing exterior solution with
$U_{\eps,z}^1 = 1$ on $\partial D_\eps$ and set 
\begin{align}\label{eq:48}
V_{\eps,z}^1 = W_\eps - U_{\eps,z}^1\quad \mathrm{in}\; \R^3\backslash \overline {D_\eps}.
\end{align}
Clearly, 
\begin{align*}
\widetilde{\mathfrak b}_{\eps,z}(V_{\eps,z}^1, W_\eps) = \int_{B_R\setminus\overline{D_\eps}}
\bigl((\rho^0)^{-1}|\nabla W_\eps|^2 - z^2(k^{(0)})^{-1}\, |W_\eps|^2 \bigr)(x)\,\dd x \\
-\int_{B_R\setminus\overline{D_\eps}}
\bigl((\rho^0)^{-1}\nabla U^1_{\eps,z}\cdot\nabla\overline {W_\eps} - z^2(k^{(0)})^{-1}\, U^1_{\eps,z}\overline{W_\eps}\bigr)(x)\,\dd x =: J^{(1)}_{\eps,z} -  J^{(2)}_{\eps,z}.
\end{align*}
By the definition of \eqref{eq:48}, we have 
\begin{align*}
\mathfrak b_{\eps,z}(V_{\eps,z}^1,v) = \widetilde{\mathfrak b}_{\eps,z}(v,W_\eps), \quad \mathrm{for}\; v\in V_\eps.
\end{align*}
Here, the sesquilinear form \(\mathfrak b_{\eps,z}\) is specified by \eqref{eq:truncated-form}. 
Utilizing Lemma \ref{le:uniform-inverse} and applying \eqref{eq:49}, we have 
\begin{align} \label{eq:50}
  \left\|V_{\eps,z}^1\right\|_{V_\eps} \le C_K \eps, \quad  z \in K.
\end{align}
From this, utilizing \eqref{eq:49}, we have 
\begin{align} \label{eq:51}
    \left|\widetilde{\mathfrak b}_{\eps,z}(V_{\eps,z}^1, W_\eps) \right| \le C_K \eps^2, \qquad  z \in K.
\end{align}
Furthermore, with the aid of \eqref{eq:43} and \eqref{eq:47}, we have
\begin{align} \label{eq:52}
\left|J^{(1)}_{\eps,z}  - \int_{\R^3\setminus D_\eps}\frac{1}{\rho^{(0)}(x)}|\nabla P_\eps(x)|^2\,\dd x\right| \le C_K\eps^2, \qquad  z \in K.
\end{align}
Moreover, using \eqref{eq:48} and \eqref{eq:50}, we obtain
\begin{align} \label{eq:53}
\left| J^{(2)}_{\eps,z} - {\widetilde {\mathfrak b}_{\eps,z}}(U^1_{\eps,z}, U^1_{\eps,z}) + 
\frac{1}{\rho_-}\left\langle
\Lambda_R^{\rm out}(z/c_-)\gamma_RU^1_{\eps,z},
\gamma_RU^1_{\eps,z}
\right\rangle_{H^{-1/2}(\partial B_R),H^{1/2}(\partial B_R)}\right| \le C_K\eps^2, \quad  z \in K.
\end{align}

On the other hand, it follows from \eqref{eq:scaled_2} that 
\begin{align}\label{eq:45}
\eps \mathfrak q_\varepsilon(0;1,1)
=
\int_{\R^3\setminus  \overline{D_\eps}} \frac{1}{\rho^{(0)}(x)}|\nabla P_\eps(x)|^2\,\dd x,
\end{align}
and that 
\begin{align*}
\eps \mathfrak q_\eps(z;\one,\one)
= \widetilde{\mathfrak b}_{\eps,z}(U_{\eps,z}^1,U_{\eps,z}^1) -  
\frac{1}{\rho_-}\left\langle
\Lambda_R^{\rm out}(z/c_-)\gamma_RU^1_{\eps,z},
\gamma_RU^1_{\eps,z}
\right\rangle_{H^{-1/2}(\partial B_R),H^{1/2}(\partial B_R)}.
\end{align*}
Those two identities, together with \eqref{eq:51}, \eqref{eq:52} and \eqref{eq:53} give \eqref{eq:55}. The proof of this lemma is thus completed.
\end{proof}

\begin{lemma}
\label{lem:closest-point-localization}
For each $\varepsilon>0$, let \(e_\varepsilon \in \partial D_{\varepsilon}\) and \(p_\varepsilon \in \Gamma\) be the minimizing pair satisfying \eqref{eq:mini}, and let \(\zeta_\varepsilon\) be given by \eqref{eq:scaled}. Under Assumption \ref{as_1}(i), we have that \eqref{eq:30} holds.
\end{lemma}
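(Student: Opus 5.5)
The claim \eqref{eq:30} has three parts: \(p_\varepsilon\to p_*\), \(\nu_\Gamma(p_\varepsilon)\to\nu_*\), and \(\xi_\varepsilon\to\xi_*\). We prove them in that order.

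\textbf{Convergence of \(p_\varepsilon\).} Since \(e_\varepsilon\in\partial D_\varepsilon=x_\varepsilon+\varepsilon\partial B\) and \(B\) is bounded, \(|e_\varepsilon-x_\varepsilon|\le\varepsilon\operatorname{diam}(B)\to0\). Moreover \(d_\varepsilon=\varepsilon\Theta_\varepsilon\to0\), because \(\Theta_\varepsilon\to\Theta_*<\infty\). Hence
\[
|p_\varepsilon-p_*|\le|p_\varepsilon-e_\varepsilon|+|e_\varepsilon-x_\varepsilon|+|x_\varepsilon-p_*|\to0 .
\]
Since \(\nu_\Gamma\) is continuous on the \(C^2\) surface \(\Gamma\), this gives \(\nu_\Gamma(p_\varepsilon)\to\nu_*\).

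\textbf{The first-order condition at \(e_\varepsilon\).} We would establish
\[
e_\varepsilon=p_\varepsilon-d_\varepsilon\,\nu_\Gamma(p_\varepsilon)
\]
whenever \(d_\varepsilon>0\). Set \(\nu_\varepsilon:=\nu_\Gamma(p_\varepsilon)\). For small \(\varepsilon\) we have \(e_\varepsilon\in U_{r_\Gamma}\cap\Omega_+\), and \(p_\varepsilon\) is the nearest point of \(\Gamma\) to \(e_\varepsilon\); otherwise \(\operatorname{dist}(D_\varepsilon,\Gamma)\) would be smaller than \(d_\varepsilon\). So \(p_\varepsilon=\pi_\Gamma(e_\varepsilon)\), and \eqref{eq:2} gives the identity. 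Next, the minimality of \(e_\varepsilon\) over \(\overline{D_\varepsilon}\) gives that \(e_\varepsilon\) minimizes \(\operatorname{dist}(\cdot,\Gamma)\) on \(\overline{D_\varepsilon}\subset\Omega_+\). Then \(\overline{D_\varepsilon}\) lies outside the open ball \(B_{d_\varepsilon}(p_\varepsilon)\), which is tangent at \(e_\varepsilon\) with inner direction \(\nu_\varepsilon\).

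\textbf{Convergence of \(\xi_\varepsilon\) (main obstacle).} This step is delicate because the curvature of \(\Gamma\) interacts with the scale \(\varepsilon\). In the rescaled variables the interface is \(\Gamma_\varepsilon:=(\Gamma-x_\varepsilon)/\varepsilon\), and \(\xi_\varepsilon\in\partial B\) is a point of \(\overline B\) nearest to \(\Gamma_\varepsilon\). Near \(\widetilde x_{p_\varepsilon}=\xi_\varepsilon-\Theta_\varepsilon\nu_\varepsilon\), the set \(\Gamma_\varepsilon\) is a graph over its tangent plane \(\{\nu_\varepsilon\cdot\widetilde x=\nu_\varepsilon\cdot\xi_\varepsilon-\Theta_\varepsilon\}\) with deviation \(O(\varepsilon|\widetilde x-\widetilde x_{p_\varepsilon}|^2)\), uniformly on bounded sets because \(\Gamma\) is \(C^2\). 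We compare \(\nu_\varepsilon\cdot\xi\) at \(\xi=\xi_\varepsilon\) with \(\ell_B(\nu_\varepsilon)\):
\begin{itemize}
\item \(\overline B\) stays in the region \(\{\widetilde x:\operatorname{dist}(\widetilde x,\Gamma_\varepsilon)\ge\Theta_\varepsilon\}\) on the \(\Omega_+\) side.
\item Since \(\overline B\) is bounded, this region contains \(\{\nu_\varepsilon\cdot\widetilde x\le\nu_\varepsilon\cdot\xi_\varepsilon+O(\varepsilon)\}\cap B_M\) for a fixed radius \(M\) with \(\overline B\subset B_M\). In other words, the tangent-plane approximation of the parallel surface costs only \(O(\varepsilon)\) on \(B_M\).
\end{itemize}
This yields
\[
\nu_\varepsilon\cdot\xi_\varepsilon\ge\ell_B(\nu_\varepsilon)-C\varepsilon .
\]
The reverse inequality is trivial, so \(\nu_\varepsilon\cdot\xi_\varepsilon-\ell_B(\nu_\varepsilon)\to0\).

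\textbf{Conclusion by compactness.} Take any subsequence; by compactness of \(\partial B\) a further subsequence has \(\xi_\varepsilon\to\bar\xi\in\partial B\). Since \(\ell_B\) is continuous (it is a support function) and \(\nu_\varepsilon\to\nu_*\), we get \(\nu_*\cdot\bar\xi=\ell_B(\nu_*)\). So \(\bar\xi\) is a maximizer of \(\nu_*\cdot\xi\) over \(\overline B\). The unique-supporting-point hypothesis in Assumption \ref{as_1}(i) forces \(\bar\xi=\xi_*\). Since every subsequence has a further subsequence with this limit, the whole family converges, \(\xi_\varepsilon\to\xi_*\).

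The contact case \(d_\varepsilon=0\) needs a separate check: then \(e_\varepsilon=p_\varepsilon\in\Gamma\cap\partial D_\varepsilon\), which conflicts with \(\overline{D_\varepsilon}\subset\Omega_+\), so this case does not occur.
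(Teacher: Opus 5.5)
Your outline is the same as the paper's: convergence of \(p_\eps\) and \(\nu_\Gamma(p_\eps)\) by the triangle inequality, then an almost-maximality property of \(\xi_\eps\) in the direction \(\nu_\eps\), then compactness and the unique supporting point. However, the step you yourself call the main obstacle does not work as written.

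You state that the region \(R_\eps\) of points at rescaled distance \(\ge\Theta_\eps\) from \(\Gamma_\eps\) on the \(\Omega_+\) side \emph{contains} \(\{\nu_\eps\cdot\widetilde x\le\nu_\eps\cdot\xi_\eps+O(\eps)\}\cap B_M\). Together with \(\overline B\subset R_\eps\), this inclusion says nothing about where \(\overline B\) lies. So it cannot give \(\nu_\eps\cdot\xi_\eps\ge\ell_B(\nu_\eps)-C\eps\). What you need is the reverse inclusion \(R_\eps\cap B_M\subset\{\nu_\eps\cdot\widetilde x\le\nu_\eps\cdot\xi_\eps+C\eps\}\). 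For that you only offer the slogan that the tangent-plane approximation of the parallel surface costs \(O(\eps)\), which is not a proof.

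The signs in your graph description are also off. Your own identity \(e_\eps=p_\eps-d_\eps\nu_\eps\) gives \(\widetilde x_{p_\eps}=\xi_\eps+\Theta_\eps\nu_\eps\) and tangent plane \(\nu_\eps\cdot\widetilde x=\nu_\eps\cdot\xi_\eps+\Theta_\eps\), consistent with \(\Pi^B_{\Theta,\nu}\). The minus signs you wrote, \(\xi_\eps-\Theta_\eps\nu_\eps\), contradict this.

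The missing estimate is immediate if you use the signed distance instead of a graph description of \(\Gamma_\eps\), which is how the paper argues.

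For every \(\eta\in\overline B\), the point \(x_\eps+\eps\eta\) lies in \(\overline{D_\eps}\subset\Omega_+\), so \(s_\Gamma(x_\eps+\eps\eta)\ge d_\eps=s_\Gamma(e_\eps)\). Since \(s_\Gamma\) is \(C^2\) near \(p_*\) and \(|\eta-\xi_\eps|\le\operatorname{diam}B\), Taylor's formula at \(e_\eps\) gives \(s_\Gamma(x_\eps+\eps\eta)=s_\Gamma(e_\eps)+\eps\nabla s_\Gamma(e_\eps)\cdot(\eta-\xi_\eps)+O(\eps^2)\), uniformly in \(\eta\). Hence \(\nabla s_\Gamma(e_\eps)\cdot(\eta-\xi_\eps)\ge-C\eps\).

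You can finish in either of two ways:
\begin{enumerate}
\item Use \(\nabla s_\Gamma(e_\eps)=-\nu_\Gamma(\pi_\Gamma(e_\eps))=-\nu_\eps\), which relies on your identification \(p_\eps=\pi_\Gamma(e_\eps)\). This gives exactly your inequality \(\nu_\eps\cdot\xi_\eps\ge\ell_B(\nu_\eps)-C\eps\).
\item As in the paper, pass to the limit along a subsequence \(\xi_\eps\to\bar\xi\), using \(\nabla s_\Gamma(e_\eps)\to\nabla s_\Gamma(p_*)=-\nu_*\). This yields \(\nu_*\cdot\bar\xi\ge\nu_*\cdot\eta\) for all \(\eta\in\overline B\), and needs neither \(p_\eps=\pi_\Gamma(e_\eps)\) nor the tangent-ball picture.
\end{enumerate}

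With this replacement, the rest of your argument goes through unchanged: the trivial reverse inequality, continuity of \(\ell_B\), and the subsequence argument with the unique supporting point.
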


\begin{proof}
Since \(B\) is bounded, with the aid of \eqref{eq:scaled}, we have
\begin{align}
|p_\varepsilon-p_*|
&\leq
|p_\varepsilon-e_\varepsilon|
+
|e_\varepsilon-x_\varepsilon|
+
|x_\varepsilon-p_*|\notag
\\
&\leq
d_\varepsilon + C_B\varepsilon+|x_\varepsilon-p_*|. \notag
\end{align}
This, together with the assumption \eqref{eq:asump_1}
implies that
\begin{align*}
\lim_{\eps \rightarrow 0} p_\varepsilon = p_*.
\end{align*}
The continuity of the unit normal field on the \(C^2\) surface
\(\Gamma\) therefore gives
\begin{align*}
\lim_{\eps \rightarrow 0}\nu_\Gamma(p_\varepsilon)
=
\nu_\Gamma(p_*)
=
\nu_*.
\end{align*}

It remains to identify the limit of \(\xi_\varepsilon\). Since
\(\overline B\) is compact, every sequence
\(\{\xi_\varepsilon\}\subset\overline B\) admits a convergent
subsequence. Consider an arbitrary subsequence, still denoted by
\(\{\xi_\varepsilon\}\), such that
\begin{align*}
\lim_{\eps \rightarrow 0}\xi_\varepsilon = \overline\xi
\in\overline B.
\end{align*}
Fix \(\eta \in \overline{B}\). In the rest of the proof, we assume that \(\varepsilon\) is sufficiently small. Since \(s_\Gamma\in C^2(U_{r_\Gamma})\), using \eqref{eq:scaled} and applying Taylor's formula at \(e_\varepsilon\) gives
\begin{align}\label{eq:4}
s_\Gamma(x_\varepsilon+\varepsilon\eta)
&=
s_\Gamma(e_\varepsilon)
+
\varepsilon
\nabla s_\Gamma(e_\varepsilon)
\cdot(\eta-\xi_\varepsilon)
+
O(\varepsilon^2), \qquad \mathrm{as}\; \eps \rightarrow 0.
\end{align}
The remainder is uniform for
\(\eta,\xi_\varepsilon\in\overline B\), because \(B\) is bounded and
\(s_\Gamma\) has bounded second derivatives in a smaller tubular
neighborhood of \(p_*\). Furthermore, since \(e_\varepsilon\) realizes the distance from
\(\overline{D_\varepsilon}\) to \(\Gamma\), we have
\begin{align}\label{eq:14}
s_\Gamma(x_\varepsilon+\varepsilon\xi_\varepsilon)
\geq
d_\varepsilon
=
s_\Gamma(e_\varepsilon).
\end{align}
Using \eqref{eq:14}, and passing to the limit on the both sides of \eqref{eq:4} along the chosen subsequence yields
\[
-\nu_*\cdot(\eta-\overline\xi)\geq0
\qquad
\text{for every }\eta\in\overline B.
\]
Therefore,
\[
\overline\xi
\in
\operatorname*{arg\,max}_{\xi\in\overline B}
\nu_*\cdot\xi.
\]
By the unique-supporting-point assumption, we have
\(\overline\xi=\xi_*\).
Therefore, every convergent subsequence of \(\{\xi_\varepsilon\}\) has the
same limit \(\xi_*\). The proof of this lemma is thus completed.
\end{proof}

We proceed to introduce the following continuity result for the capacitary
minimum under almost-everywhere convergence of uniformly elliptic
coefficients. Its proof, based on Mosco's stability theorem for
variational inequalities
\cite[Chapter~4, Theorem~4.1]{R_87},
is given in Appendix~\ref{app:D2}.

\begin{proposition}
\label{lem:weighted-capacity-continuity}

Assume that
\begin{align}\label{eq:15}
0<a_*\leq b_n({\widetilde y}),\; b({\widetilde y})\leq a^*<\infty
\quad\text{for almost every }{\widetilde y}\in\mathbb R^3,
\end{align}
and that 
\begin{align}\label{eq:16}
\lim_{n \rightarrow \infty}b_n({\widetilde y}) = b({\widetilde y})\quad \mathrm{for}\;\mathrm{almost}\; \mathrm{every}\;{\widetilde y}\in\mathbb R^3.
\end{align}
Then, we have 
\begin{align}\label{eq:12}
\lim_{n\rightarrow \infty}m(b_n) = m(b).
\end{align}
Here, 
\[
m(e) := \inf_{v\in\mathcal A_B}
\int_{\mathbb R^3\setminus\overline B}
e({\widetilde y})|\nabla v({\widetilde y})|^2\,d{\widetilde y}
\]
for each uniformly positive coefficient \(e \in L^{\infty}(\R^3)\).
\end{proposition}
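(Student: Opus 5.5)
The plan is to work in the Hilbert space \(\mathcal H:=\{v\in D^{1,2}(\mathbb R^3): v=1\text{ a.e. in }B\}\), or rather its affine structure. Write \(\mathcal A_B=\chi+\mathcal H_0\), where \(\chi\in C_c^\infty(\mathbb R^3)\) equals one near \(\overline B\) and \(\mathcal H_0:=\{w\in D^{1,2}(\mathbb R^3): w=0\text{ a.e. in }B\}\). This is a closed subspace of \(D^{1,2}\), normed by \(\|\nabla w\|_{L^2}\) (Sobolev inequality). For each coefficient \(e\) with \(a_*\le e\le a^*\), the energy
\[
E_e(v):=\int_{\mathbb R^3\setminus\overline B} e|\nabla v|^2
\]
is strictly convex and coercive on the closed convex set \(\mathcal A_B\). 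It therefore has a unique minimizer \(v_e\) (the capacitary potential), and \(m(e)=E_e(v_e)\). Using \(\chi\) as a competitor gives the uniform bound \(\|\nabla v_{b_n}\|_{L^2}^2\le a^*\|\nabla\chi\|^2/a_*\).

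I would prove \eqref{eq:12} by a direct \(\Gamma\)-convergence / Mosco-type argument rather than quoting the abstract theorem, though \cite[Chapter~4, Theorem~4.1]{R_87} could be invoked once its hypotheses are checked.

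\emph{Upper bound.} Take \(v_b\) as a competitor for every \(n\). Then \(m(b_n)\le \int b_n|\nabla v_b|^2\). Since \(b_n\to b\) a.e., \(b_n\le a^*\), and \(|\nabla v_b|^2\in L^1\), dominated convergence gives \(\limsup_n m(b_n)\le m(b)\).

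\emph{Lower bound.} By the uniform bound, a subsequence satisfies \(v_{b_n}-\chi\rightharpoonup w\) weakly in \(\mathcal H_0\). Weak closedness of the affine set gives \(v:=\chi+w\in\mathcal A_B\), and \(\nabla v_{b_n}\rightharpoonup\nabla v\) weakly in \(L^2\). The main obstacle is the varying coefficient in the lower semicontinuity. I would write
\[
\int b_n|\nabla v_{b_n}|^2=\bigl\|\sqrt{b_n}\,\nabla v_{b_n}\bigr\|_{L^2}^2
\]
and show that \(\sqrt{b_n}\,\nabla v_{b_n}\rightharpoonup\sqrt b\,\nabla v\) weakly in \(L^2\). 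For this, test against \(\phi\in L^2\): we have \(\sqrt{b_n}\phi\to\sqrt b\phi\) strongly in \(L^2\) by dominated convergence, since \(|\sqrt{b_n}\phi|\le\sqrt{a^*}|\phi|\). A strong–weak pairing then converges. Weak lower semicontinuity of the norm gives
\[
\liminf_n m(b_n)\ge\int b|\nabla v|^2\ge m(b).
\]

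Combining the two bounds, every subsequence has a further subsequence along which \(m(b_n)\to m(b)\), which yields \eqref{eq:12}. As a byproduct, the two bounds force \(\int b|\nabla v|^2=m(b)\), so \(v=v_b\) by uniqueness. Together with convergence of norms, this upgrades to strong convergence of \(\sqrt{b_n}\nabla v_{b_n}\), and hence of \(\nabla v_{b_n}\) to \(\nabla v_b\) in \(L^2\). This is the strong convergence of capacitary potentials advertised in the introduction, although only the energies are needed for the statement.
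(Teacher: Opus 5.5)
Your proposal is correct, and it argues differently from the paper.

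\textbf{The paper's route.} The paper does not split the claim into upper and lower bounds. It treats the capacitary potentials as solutions of the variational inequalities $\langle \mathfrak B_n u_n, v-u_n\rangle\ge 0$ on the fixed convex set $\mathcal A_B$. It checks that the operators $\mathfrak B_n$ are uniformly Lipschitz and strongly monotone, and that $\mathfrak B_n v_n\to\mathfrak B v$ whenever $v_n\to v$, using dominated convergence on $\|(b_n-b)\nabla v\|_{L^2}$. It then quotes Mosco's stability theorem (\cite{R_87}, Chapter~4, Theorem~4.1) to get $u_n\to u$ strongly in $D^{1,2}(\mathbb R^3)$. Convergence of $m(b_n)$ is read off from this strong convergence of the potentials.

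\textbf{Your route.} You give a direct liminf/limsup ($\Gamma$-convergence) argument.
\begin{itemize}
\item The limsup uses the fixed competitor $v_b$ and dominated convergence.
\item The liminf uses weak compactness together with the strong--weak pairing $\int \nabla v_{b_n}\cdot\sqrt{b_n}\,\phi$. This is precisely where the a.e.\ convergence of the coefficients enters.
\item Strong convergence of the potentials then comes as a byproduct, from convergence of norms and uniqueness of the minimizer.
\end{itemize}
When you identify $\int_{\mathbb R^3\setminus\overline B} b|\nabla v|^2$ with $\|\sqrt b\,\nabla v\|_{L^2(\mathbb R^3)}^2$, you implicitly use that $\nabla v=0$ a.e.\ in $B$ and $|\partial B|=0$. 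Both hold here.

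\textbf{Comparison.} Both routes rest on the same analytic input: dominated convergence turns a.e.\ convergence of uniformly bounded coefficients into strong $L^2$ convergence of $b_nF$ for a fixed field $F$. Your argument is self-contained and exploits the symmetric quadratic structure. It thus avoids checking the hypotheses of an abstract theorem, which the paper does somewhat loosely; for instance, it passes from $\mathfrak B_n u_n\to\mathfrak B u$ to energy convergence without spelling out the pairing with $u_n\to u$. The paper's route, on the other hand, would extend to non-symmetric strongly monotone operators and to constraint sets varying in the sense of Mosco. It also delivers strong convergence of the potentials as the primary output rather than as a corollary.
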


\begin{lemma}
\label{le:static-capacitance}
We have that 
in the finite scaled-distance regime,
\[
\lim_{\eps \rightarrow 0} \mathfrak q_\varepsilon(0;1,1)
=
\icap^{\mathrm{flat}}_{\Theta_*,\nu_*}(B),
\qquad
\nu_*:=\nu_\Gamma(p_*),
\]
whereas, if \(\Theta_*=\infty\),
\[
\lim_{\eps\rightarrow 0} \mathfrak q_\varepsilon(0;1,1)
=
\rho_+^{-1}\icap_{\mathrm{Newt}}(B).
\]
\end{lemma}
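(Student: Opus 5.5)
The plan is to reduce the statement to Proposition~\ref{lem:weighted-capacity-continuity} by a change of variables. First I would use identity \eqref{eq:45}, which gives $\mathfrak q_\varepsilon(0;1,1)=\varepsilon^{-1}\icap^{\mathrm{phys}}(D_\varepsilon)=\mathfrak C_\varepsilon$. Next I rescale the variational problem \eqref{eq:phy} by $x=x_\varepsilon+\varepsilon\widetilde x$. The map $v\mapsto v(x_\varepsilon+\varepsilon\,\cdot)$ is a bijection from $\mathcal A_{D_\varepsilon}$ onto $\mathcal A_B$, and the Dirichlet energy scales by exactly $\varepsilon$. Hence
\[
\mathfrak C_\varepsilon=m(b_\varepsilon),\qquad b_\varepsilon(\widetilde x):=\rho^0(x_\varepsilon+\varepsilon\widetilde x)^{-1}.
\]
These coefficients take only the values $\rho_\pm^{-1}$, so \eqref{eq:15} holds uniformly with $a_*=\min\rho_\pm^{-1}$ and $a^*=\max\rho_\pm^{-1}$. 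It therefore suffices to prove that $b_\varepsilon\to a^{\mathrm{flat}}_{\Theta_*,\nu_*}$ almost everywhere; in the case $\Theta_*=\infty$ the target is the constant $\rho_+^{-1}$. I would run the argument along an arbitrary sequence $\varepsilon_n\to0$ and apply Proposition~\ref{lem:weighted-capacity-continuity} to each such sequence, which gives convergence of the full family.

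\emph{Separated regime.} The signed distance $s_\Gamma$ is $1$-Lipschitz, and $s_\Gamma(e_\varepsilon)=d_\varepsilon$ because $e_\varepsilon\in\overline{\Omega_+}$. Writing $e_\varepsilon=x_\varepsilon+\varepsilon\xi_\varepsilon$ with $\xi_\varepsilon\in\partial B$, I get for fixed $\widetilde x$
\[
s_\Gamma(x_\varepsilon+\varepsilon\widetilde x)\ge \varepsilon\bigl(\Theta_\varepsilon-|\widetilde x-\xi_\varepsilon|\bigr)\ge\varepsilon\bigl(\Theta_\varepsilon-|\widetilde x|-\operatorname{diam}(B)-|{\cdot}|_{B}\bigr),
\]
where the last bracket uses only that $B$ is bounded and contains the origin. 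The right-hand side is positive for small $\varepsilon$ since $\Theta_\varepsilon\to\infty$. Thus $x_\varepsilon+\varepsilon\widetilde x\in\Omega_+$ eventually, so $b_\varepsilon(\widetilde x)\to\rho_+^{-1}$ pointwise. This argument does not need convergence of $x_\varepsilon$, which is important because Assumption~\ref{as_1}(ii) does not provide it.

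\emph{Finite scaled-distance regime.} This is the main step. By Lemma~\ref{lem:closest-point-localization}, $x_\varepsilon\to p_*$, $\xi_\varepsilon\to\xi_*$ and $\nu_\Gamma(p_\varepsilon)\to\nu_*$. Fix $\widetilde x\in\mathbb R^3$. For small $\varepsilon$, both $e_\varepsilon$ and $x_\varepsilon+\varepsilon\widetilde x$ lie in a fixed small tubular neighbourhood of $p_*$, on which $s_\Gamma$ is $C^2$ with bounded Hessian. From \eqref{eq:2}, $\nabla s_\Gamma(e_\varepsilon)=-\nu_\Gamma(p_\varepsilon)$. The Taylor expansion \eqref{eq:4}, now taken with $\widetilde x$ in place of $\eta$ (the remainder is uniform on bounded sets of $\widetilde x$), then gives
\[
\varepsilon^{-1}s_\Gamma(x_\varepsilon+\varepsilon\widetilde x)=\Theta_\varepsilon-\nu_\Gamma(p_\varepsilon)\cdot(\widetilde x-\xi_\varepsilon)+O(\varepsilon)\longrightarrow \Theta_*+\ell_B(\nu_*)-\nu_*\cdot\widetilde x .
\]
Here I use $\nu_*\cdot\xi_*=\ell_B(\nu_*)$, which holds by the definition of $\xi_*$. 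Suppose $\widetilde x$ is off the plane $\Pi^B_{\Theta_*,\nu_*}$. If the limit is positive, the point eventually lies in $\Omega_+$ and $b_\varepsilon(\widetilde x)=\rho_+^{-1}$. If the limit is negative, the point eventually lies in $\Omega_-$ and $b_\varepsilon(\widetilde x)=\rho_-^{-1}$. This is exactly $a^{\mathrm{flat}}_{\Theta_*,\nu_*}(\widetilde x)$ from \eqref{eq:flat_1}. The exceptional set is the plane, which has Lebesgue measure zero, so the convergence is almost everywhere. This holds equally for $\Theta_*=0$, where the plane is the supporting plane of $B$; this is why the contact configuration needs no separate treatment here. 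Proposition~\ref{lem:weighted-capacity-continuity} then yields the claim.

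I expect the main delicate point to be the uniformity of the Taylor remainder, together with ensuring that $x_\varepsilon+\varepsilon\widetilde x$ stays inside $U_{r_\Gamma}$. Both follow for each fixed $\widetilde x$ from $x_\varepsilon\to p_*\in\Gamma$, and only pointwise convergence is needed, so no uniformity in $\widetilde x$ over all of $\mathbb R^3$ is required.
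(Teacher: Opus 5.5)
Your proposal is correct and follows essentially the paper's route: identify $\mathfrak q_\varepsilon(0;1,1)=\mathfrak C_\varepsilon=m(\widehat a_\varepsilon)$ via \eqref{eq:45} and rescaling, obtain almost-everywhere convergence of the rescaled background coefficient from a first-order expansion of $s_\Gamma$ (you expand at $e_\varepsilon$, the paper at $p_\varepsilon$ in \eqref{eq:23}), and conclude with Proposition~\ref{lem:weighted-capacity-continuity}. Your $1$-Lipschitz argument for $\Theta_*=\infty$ is more careful than the paper's, which uses \eqref{eq:23} and Lemma~\ref{lem:closest-point-localization} in that case, although these require $x_\varepsilon+\varepsilon\widetilde y\in U_{r_\Gamma}$, a bounded $\Theta_\varepsilon$ for the $O(\varepsilon^2)$ remainder, and $x_\varepsilon\to p_*$, none of which Assumption~\ref{as_1}(ii) guarantees.
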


\begin{proof}
Let $P_\eps$ denote the static capacitary potential of $D_\eps$ in
the background, as stated in Lemma \ref{le:main-D2}. Introducing a change of variable $ x = x_\eps + \eps {\widetilde x}$, we have 
\[
\int_{\R^3\setminus \overline{D_\eps}} \frac{1}{\rho^{(0)}(x)}|\nabla P_\eps(x)|^2\,\dd x = \eps \int_{\R^3\setminus \overline{D_\eps}} \frac{1}{\rho^{(0)}(x_\eps + \eps {\widetilde x})}|\nabla_{\widetilde x} P_\eps(x_\eps + \eps {\widetilde x})|^2\,\dd {\widetilde x}. 
\]
With the aid of the representation \eqref{eq:45}, we have 
\begin{align} \label{eq:24}
\mathfrak q_\eps(0,1,1)
=
\inf_{v\in\mathcal A_B}
\int_{\mathbb R^3\setminus\overline B}
\widehat a_\varepsilon({\widetilde x})|\nabla v({\widetilde x})|^2\,d{\widetilde x}
=:
m(\widehat a_\varepsilon).
\end{align}
Here,
\[
\widehat a_\varepsilon({\widetilde y})
:=
\frac{1}{\rho^{(0)}\bigl(x_\eps + \eps {\widetilde y}\bigr)}, \quad {\widetilde y} \in \R^3 \backslash B.
\]

In particular,
\begin{align}\label{eq:dist}
s_\Gamma=0
\quad\text{and}\quad
\nabla s_\Gamma = -\nu_\Gamma
\qquad\text{on}\;\Gamma.
\end{align}
Let \(e_\varepsilon \in \partial D_{\varepsilon}\) and \(p_\varepsilon \in \Gamma\) be the minimizing pair satisfying \eqref{eq:mini}, and let \(\zeta_\varepsilon\) be given by \eqref{eq:scaled}. Then, applying \eqref{eq:mini}, \eqref{eq:21} and \eqref{eq:2}, we have 
\begin{align*}
e_\varepsilon
= p_\eps
-
\eps \Theta_\eps \nu_\Gamma\bigl(p_\eps).   
\end{align*}
This, together with \eqref{eq:scaled} gives
\begin{align}\label{eq:20}
    x_\eps + \eps {\widetilde y} = p_\eps - \eps \Theta_\eps\nu_\Gamma\bigl(p_\eps) + \eps ({\widetilde y}-\xi_\eps).
\end{align}

Given any \({\widetilde y} \in \R^3\), we can find \(\eps_{\widetilde y}>0\) such that 
\begin{align*}
T_\eps({\widetilde y}) \in U_{r_\Gamma},\qquad \mathrm{for}\; \eps \in (0,\eps_{\widetilde y}).
\end{align*}
Since \(s_{\Gamma} \in C^2(U_{r_\Gamma})\), using \eqref{eq:dist} and \eqref{eq:20}, we have 
\begin{align}\label{eq:23}
s_\Gamma(x_\eps + \eps {\widetilde y})
&=
-\varepsilon
\nu_\Gamma(p_\varepsilon)
\cdot(-\Theta_\eps \nu_\Gamma\bigl(p_\eps) + {\widetilde y}-\xi_\varepsilon)
+
O(\varepsilon^2), \qquad \mathrm{as}\; \eps \rightarrow 0.
\end{align}
Since
\[
\rho^{(0)}(x) = \begin{cases}
    \rho_+, & \mathrm{if}\; s_\Gamma(x) > 0 ,\\
    \rho_-, & \mathrm{if}\; s_\Gamma(x) < 0,
\end{cases}
\]
it can be deduced from \eqref{eq:23} that for sufficiently small $\eps > 0$, 
\begin{align*}
    \widehat a_\varepsilon({\widetilde y}) = \begin{cases}
    \frac 1{\rho_+}, & \mathrm{if}\; 
\nu_\Gamma(p_\varepsilon)
\cdot(\Theta_\eps \nu_\Gamma\bigl(p_\eps) - {\widetilde y} + \xi_\varepsilon) > 0 ,\\
    \frac{1}{\rho_-}, & \mathrm{if}\;
\nu_\Gamma(p_\varepsilon)
\cdot(\Theta_\eps \nu_\Gamma\bigl(p_\eps) - {\widetilde y} + \xi_\varepsilon) < 0.
    \end{cases}
\end{align*}
Combining this with Lemma \ref{lem:closest-point-localization} gives 
\[
\lim_{\eps \rightarrow 0
}\widehat a_\varepsilon({\widetilde y}) = \begin{cases} 
    a^{\mathrm{flat}}_{\Theta_*,\nu_*}(B), & \mathrm{if}\; \Theta_*\; \textrm{is finite},\\
a_{\infty,\nu}^{\mathrm{flat}}({\widetilde y}), & \mathrm{if}\; \Theta_*=\infty.
\end{cases}
\]
Here, \(a^{\mathrm{flat}}_{\Theta_*,\nu_*}\) and \(a_{\infty,\nu}^{\mathrm{flat}}\) are specified by \eqref{eq:flat_1} and \eqref{eq:flat_2}, respectively.

Using Lemma \ref{lem:weighted-capacity-continuity}, we obtain 
\begin{align}\label{eq:26}
\lim_{\eps \rightarrow 0
}m(\widehat a_\varepsilon({\widetilde y})) = \begin{cases} 
    \icap^{\mathrm{flat}}_{\Theta_*,\nu_*}(B), & \mathrm{if}\; \Theta_*\; \textrm{is finite},\\
\icap_{\infty,\nu}^{\mathrm{flat}}({\widetilde y}), & \mathrm{if}\; \Theta_*=\infty.
\end{cases}
\end{align}
Therefore, the assertion of this lemma follows from \eqref{eq:24} and \eqref{eq:26}.
\end{proof}

Now, we are ready to give the proof Theorem \ref{th:main}.

\begin{proof} [Proof of Theorem \ref{th:main}]
Let $\eps$ be sufficiently small throughout the proof. It follows from \eqref{eq:variational_1} in Lemma \ref{le:1} and \eqref{eq:22} that 
\begin{align*}
    \mathfrak A_\eps(z) = -
\frac{z^2}{\bar k_b} |B| +
\mathfrak q_\eps(z;1,1).
\end{align*}
In conjunction with Lemma \ref{le:exact-schur} and statement \eqref{d2} of Lemma \ref{le:main-D2}, we obtain that the scattering resonances in $K$ are the zeros of the following equation
\begin{align}\label{eq:27}
 - \frac{z^2}{\bar k_b} |B| +
\mathfrak q_\eps(0;1,1) + \mathrm{Res}_\eps(z) = 0.
\end{align}
Here, \(\mathrm{Res}_\eps\) is analytic in $K$ and satisfies 
\begin{align} \label{eq:13}
|\mathrm{Res}_\eps(z)| \le C_K\eps, \qquad z \in K.
\end{align}
Set 
\begin{align*}
   w_M:= \begin{cases} 
    \overline k_b\icap^{\mathrm{flat}}_{\Theta_*,\nu_*}(B)/|B|, & \mathrm{if}\; \Theta_*\; \textrm{is finite},\\
\overline k_b\icap_{\infty,\nu}^{\mathrm{flat}}({\widetilde x})/|B|, & \mathrm{if}\; \Theta_*=\infty.
\end{cases} 
\end{align*}
Choose \(0<\delta< \sqrt {w_M}/2\) such that
the disjoint discs \(B_\delta(\pm \sqrt{ w_M})\) have closures
contained in \(\mathcal O\). On their boundaries, using Lemma \ref{le:static-capacitance} and \eqref{eq:13}, we have
\begin{align*}
    \left|z^2-w_M\right|\ge\delta(2\sqrt{w_M}-\delta)>0,\qquad  \lim_{\eps \rightarrow 0} \frac{\overline k_b}{|B|}(\mathrm{Res}_\eps(z) + \mathfrak q_\eps(z,1,1))-w_M = 0.
\end{align*}
Using Rouch\'e's theorem, there exists $\eps_K$ such that for $\eps \in (0,\eps_K)$, each disc contains exactly one zero
of \(F_\varepsilon\), counted with multiplicity.
These zeros, denoted by \(z_\varepsilon^\pm\), are thus
simple and give two simple resonances.
Moreover,
factoring the left-hand side of \eqref{eq:27} and using the separation of
the two roots yields \eqref{eq:main_1} with the remainder term satisfying \eqref{eq:main_2}.

We note that \(K\Subset\mathcal O\) is compact.
The polynomial \(z^2-w_M\) is bounded away from zero on
\(K\setminus(B_\delta(+\sqrt {w_M})\cup B_\delta(-\sqrt {w_M}))\). Applying Lemma \ref{le:static-capacitance} and \eqref{eq:13} again,
\eqref{eq:27} has no zeros on this set for sufficiently
small \(\varepsilon\). There are
no resonances there.
Consequently, \(K\) contains at most these two simple
resonances. If both \(\pm\sqrt{w_M}\) belong to the interior
of \(K\), then that \(K\) contains exactly two Minnaert resonances for all sufficiently small \(\varepsilon\).

Furthermore, with the aid of \eqref{eq:45}, we have
\[
\mathfrak q_\varepsilon(0;1,1)
= \mathfrak C_\eps.
\]
The asymptotics of the normalized physical capacitance $\mathfrak C_\eps$ follows from Lemma \eqref{le:static-capacitance}.

\end{proof}

\begin{appendices}

\section{Flat-interface capacitance: representation and monotonicity} \label{app:flat-capacitance}

\begin{proposition} \label{pr:fc}
Let \(\Theta\in[0,+\infty)\) and \(\nu\in\mathbb S^2\).
The flat-interface capacitance
\(\icap^{\mathrm{flat}}_{\Theta,\nu}(B)\)
has the following properties.

\begin{enumerate}[(a)]
    \item \label{f1}
    For fixed \(B\) and \(\nu\), the flat-interface capacitance
\(\icap^{\mathrm{flat}}_{\Theta,\nu}(B)\)
    is nondecreasing if \(\rho_+ < \rho_-\), nonincreasing if
    \(\rho_+ > \rho_-\), and constant if \(\rho_+=\rho_-\).
    \item \label{f2}
The flat-interface capacitance
\(\icap^{\mathrm{flat}}_{\Theta,\nu}(B)\) admits the boundary-integral representation \eqref{eq:flat-capacity-layer}.   
\end{enumerate}
\end{proposition}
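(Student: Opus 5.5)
Part (a) will follow from a direct comparison of the admissible coefficients; part (b) from the method of images for a flat two-phase interface, applied to the capacitary potential.

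\emph{Part (a).} For \(0\le\Theta_1<\Theta_2\), the coefficients \(a^{\mathrm{flat}}_{\Theta_1,\nu}\) and \(a^{\mathrm{flat}}_{\Theta_2,\nu}\) agree outside the slab
\[
S:=\{\ell_B(\nu)+\Theta_1<\nu\cdot\widetilde x<\ell_B(\nu)+\Theta_2\}.
\]
On \(S\), the first equals \(\rho_-^{-1}\) and the second equals \(\rho_+^{-1}\). If \(\rho_+<\rho_-\), then \(\rho_+^{-1}>\rho_-^{-1}\), so \(a^{\mathrm{flat}}_{\Theta_1,\nu}\le a^{\mathrm{flat}}_{\Theta_2,\nu}\) pointwise. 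Hence, for every \(v\in\mathcal A_B\), the energy at \(\Theta_1\) is at most the energy at \(\Theta_2\). Taking the infimum over the common admissible class \(\mathcal A_B\) gives the nondecreasing property. The case \(\rho_+>\rho_-\) is symmetric, and the case \(\rho_+=\rho_-\) is trivial because the coefficient does not depend on \(\Theta\).

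Continuity at \(\Theta=0\), which the paper uses to justify the name ``contact-limit'', comes from Proposition~\ref{lem:weighted-capacity-continuity}. As \(\Theta\downarrow0\), \(a^{\mathrm{flat}}_{\Theta,\nu}\to a^{\mathrm{flat}}_{0,\nu}\) at every point off the plane \(\Pi^B_{0,\nu}\), hence almost everywhere, and the bounds \eqref{eq:15} hold with \(a_*=\min\rho_\pm^{-1}\) and \(a^*=\max\rho_\pm^{-1}\). Proposition~\ref{lem:weighted-capacity-continuity} then gives continuity at \(\Theta=0\). The same argument gives continuity in \(\Theta\) at every point.

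\emph{Part (b).} The plan is to represent the capacitary potential by a single layer built from the transmission Green function of the flat interface.

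\begin{enumerate}
\item Write down that Green function. Let \(\kappa:=\frac{1/\rho_+-1/\rho_-}{1/\rho_++1/\rho_-}\). For a source \(\widetilde y\) on the \(\rho_+\) side, the Green function of \(-\nabla\cdot(a^{\mathrm{flat}}_{\Theta,\nu}\nabla)\) is
\[
G(\widetilde x,\widetilde y)=\frac{\rho_+}{4\pi}\Bigl(\frac1{|\widetilde x-\widetilde y|}+\frac{\kappa}{|R_\Theta\widetilde x-\widetilde y|}\Bigr)
\]
when \(\widetilde x\) lies on the \(\rho_+\) side. When \(\widetilde x\) lies on the \(\rho_-\) side, it is a multiple \(\frac{\rho_+(1-\kappa)}{4\pi|\widetilde x-\widetilde y|}\) of the free kernel.
\item Check the transmission conditions directly. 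Continuity of the field across the plane holds because \(|R_\Theta\widetilde x-\widetilde y|=|\widetilde x-\widetilde y|\) on the plane. Continuity of the weighted normal flux fixes the coefficient \(\kappa\). Note that \(\partial B\) lies in the closed half-space \(\{\nu\cdot\widetilde x\le \ell_B(\nu)+\Theta\}\), and touches the plane only when \(\Theta=0\).
\item Use the kernel restricted to \(\partial B\) to define \(\mathcal S_{\Theta,\nu}\) as in the statement.
\item Show that \(\mathcal S_{\Theta,\nu}:H^{-1/2}(\partial B)\to H^{1/2}(\partial B)\) is invertible. The single layer \(u=\mathcal S\varphi\) (extended to \(\mathbb R^3\)) lies in \(D^{1,2}\), and Green's formula gives
\[
\langle\varphi,\mathcal S\varphi\rangle=\int_{\mathbb R^3}a^{\mathrm{flat}}_{\Theta,\nu}|\nabla u|^2 ,
\]
where the right-hand side includes the energy inside \(B\). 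This quantity is comparable to \(\|\varphi\|^2_{H^{-1/2}}\) by the standard Newtonian argument, since the weights are bounded above and below. Lax--Milgram then yields invertibility.
\item Set \(\varphi:=\mathcal S^{-1}1\). Then \(u\) equals \(1\) on \(B\), is \(a\)-harmonic outside \(\overline B\), and decays at infinity. So \(u\) is the unique minimizer in \(\mathcal A_B\), because the Euler--Lagrange equation characterizes the minimizer and the energy is strictly convex.
\item Since \(u\) is constant in \(B\), the interior energy vanishes. Green's formula then gives
\[
\icap^{\mathrm{flat}}_{\Theta,\nu}(B)=\int_{\mathbb R^3\setminus\overline B}a^{\mathrm{flat}}_{\Theta,\nu}|\nabla u|^2=\langle\varphi,1\rangle=\langle 1,\mathcal S^{-1}1\rangle,
\]
where the jump of the conormal derivative of \(u\) across \(\partial B\) equals \(\varphi\).
\end{enumerate}

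The main obstacle is the contact case \(\Theta=0\). There the image point \(R_0\widetilde x\) coincides with \(\widetilde x\) at the contact point \(\xi_*\). The image kernel is then no longer smooth on \(\partial B\times\partial B\), and \(\mathcal S_{0,\nu}\) is not a compact perturbation of the free single layer. I would avoid Fredholm arguments altogether and rely only on the coercivity identity in step 4, which holds uniformly in \(\Theta\) and also at \(\Theta=0\). If needed, the representation can be recovered at \(\Theta=0\) as a limit. The potentials \(\mathcal S_{\Theta,\nu}\varphi\) converge in \(D^{1,2}\) as \(\Theta\to0\), and the continuity in part (a) then passes the identity to \(\Theta=0\).
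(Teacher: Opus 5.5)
Part (a) is the paper's argument. In (b) your skeleton also matches the paper's: the image-method single layer is identified as the capacitary potential for $\Theta>0$, and $\Theta=0$ is reached through uniform invertibility of $\mathcal S_{\Theta,\nu}$ together with Proposition~\ref{lem:weighted-capacity-continuity}.

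There is, however, a slip in step~1 that breaks steps~4--6 as written. On the $\rho_-$ side the transmitted kernel must be $\rho_+(1+\kappa)/(4\pi|\widetilde x-\widetilde y|)=\bigl(2/(\rho_+^{-1}+\rho_-^{-1})\bigr)/(4\pi|\widetilde x-\widetilde y|)$, not $\rho_+(1-\kappa)/(4\pi|\widetilde x-\widetilde y|)$. Your own continuity check in step~2 shows this. On the plane the $\rho_+$-side value is $\rho_+(1+\kappa)/(4\pi|\widetilde x-\widetilde y|)$. With $1-\kappa$ the potential therefore jumps by $2\rho_+\kappa/(4\pi|\widetilde x-\widetilde y|)\neq0$ whenever $\rho_+\neq\rho_-$. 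It then fails to lie in $D^{1,2}(\mathbb R^3)$, and the energy identity of step~4 fails. With $1+\kappa$, which is exactly the paper's $W_\Theta$, both transmission conditions hold and the rest of your argument goes through.

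The genuinely different ingredient is your coercivity proof. The paper never invokes the transmission problem for it. It writes
$\rho_+^{-1}\langle\varphi,\mathcal S_{\Theta,\nu}\varphi\rangle=\|\nabla\mathcal N\varphi\|_{L^2}^2+\kappa\int_{\mathbb R^3}\nabla\mathcal N\varphi\cdot\overline{\nabla(\mathcal N\varphi\circ R_\Theta)}$,
where $\mathcal N$ is the free Newtonian single layer. It bounds the last integral by $\|\nabla\mathcal N\varphi\|_{L^2}^2$ because $R_\Theta$ is an isometry, and then uses $|\kappa|<1$. This is uniform in $\Theta\ge0$ and needs nothing special at contact.

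Your route combines the weighted energy identity with the trace-duality bound $\|\varphi\|_{H^{-1/2}(\partial B)}\lesssim\|\nabla u\|_{L^2}$. It is valid, but it presupposes that $u$ is a weak $D^{1,2}$ solution of the flat transmission problem. At $\Theta=0$, where $\partial B$ touches the plane, you assert this without argument. It is true: substitute $\widetilde x\mapsto R_0\widetilde x$ on the $\rho_+$ side. The weak identity then reduces to $\int\nabla\mathcal N\varphi\cdot\nabla(\overline v\circ R_0-\overline v)=0$ over the open half-space $\{\nu\cdot\widetilde x>\ell_B(\nu)\}$. There $\mathcal N\varphi$ is harmonic and $\overline v\circ R_0-\overline v$ has zero trace on the plane. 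You should include this step.

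What your route buys is that, once this is checked, steps~5--6 apply verbatim at $\Theta=0$. Since $B$ is open, it lies in $\{\nu\cdot\widetilde x<\ell_B(\nu)\}$, which $R_0(\partial B)$ does not meet, so $u$ is harmonic in $B$ with trace $1$. The representation at contact then follows directly, without the limit $\Theta\to0$ or the Mosco-based Proposition~\ref{lem:weighted-capacity-continuity}.

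If you keep the limit argument instead, what you actually need is $\mathcal S_{\Theta,\nu}^{-1}1\to\mathcal S_{0,\nu}^{-1}1$ in $H^{-1/2}(\partial B)$. This follows from strong convergence $\mathcal S_{\Theta,\nu}\varphi\to\mathcal S_{0,\nu}\varphi$ together with your uniform coercivity, not from convergence of the potentials alone.
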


\begin{proof}
\eqref{f1}
Let \(0\leq\Theta_1<\Theta_2\). A straightforward calculation gives
\[
a^{\mathrm{flat}}_{\Theta_2,\nu}-a^{\mathrm{flat}}_{\Theta_1,\nu}
=
\left(\frac 1{\rho_+} - \frac{1}{\rho_-}\right)
\mathbf 1_{\{ \Theta_1 < \nu \cdot {\widetilde x} - l_B(\nu) < \Theta_2\}}
\qquad\text{in }\mathbb R^3.
\]
Consequently, for every \(v\in\mathcal A_B\),
\begin{align*}
\int_{\mathbb R^3\setminus\overline B}
a^{\mathrm{flat}}_{\Theta_2,\nu}({\widetilde x})|\nabla v({\widetilde x})|^2\,d{\widetilde x} - \int_{\mathbb R^3\setminus\overline B}
a^{\mathrm{flat}}_{\Theta_1,\nu}({\widetilde x})|\nabla v({\widetilde x})|^2\,d{\widetilde x}\\
=
\left(\frac 1{\rho_+} - \frac{1}{\rho_-}\right)
\int_{\{\Theta_1 < \nu \cdot {\widetilde x} - l_B(\nu) < \Theta_2\}}
|\nabla v({\widetilde x})|^2\,d{\widetilde x}.
\end{align*}
Therefore, we arrive at 
\begin{align*}
&\icap^{\mathrm{flat}}_{\Theta_2,\nu}(B)
\le
\icap^{\mathrm{flat}}_{\Theta_1,\nu}(B),\qquad \mathrm{if}\; \rho_+ \ge \rho_-,\\
&\icap^{\mathrm{flat}}_{\Theta_2,\nu}(B)
\ge 
\icap^{\mathrm{flat}}_{\Theta_1,\nu}(B),\qquad \mathrm{if}\;\rho_+ < \rho_-.
\end{align*}
This yields the statement.

\eqref{f2}
Let \(\mathcal N\) denote the Newtonian single-layer potential,
\[
    (\mathcal N\varphi)({\widetilde x})
    :=
    \frac{1}{4\pi}
    \int_{\partial B}\frac{\varphi({\widetilde y})}{|{\widetilde x}-{\widetilde y}|}\,dS({\widetilde y}).
\]
Since
\(\mathcal N\varphi\in  D^{1,2}(\mathbb R^3)\) and
\begin{align} \label{eq:54}
    \int_{\mathbb R^3}
        \left[\nabla\mathcal N\varphi\right]({\widetilde x})\cdot \overline{\nabla v}({\widetilde x})\,d{\widetilde x}
    =
    \int_{\partial B}\varphi({\widetilde x})(\gamma_{\partial B}\overline v)({\widetilde x}) d\sigma({\widetilde x}),
    \qquad v\in D^{1,2}(\mathbb R^3).
\end{align}
This, together with the trace theorem and a bounded right inverse
of the trace with compact support gives
\begin{align}\label{eq:free-layer-energy}
    C_1\|\varphi\|_{H^{-1/2}(\partial B)}^2
    \le
    \|\nabla\mathcal N\varphi\|_{L^2(\mathbb R^3)}^2
    \le
    C_2\|\varphi\|_{H^{-1/2}(\partial B)}^2.
\end{align}
Furthermore, a straightforward calculation gives
\begin{align} \label{eq:57}
    \left|
        \int_{\mathbb R^3}\left[
        \nabla\mathcal N\varphi\cdot
        \overline{\nabla\bigl((\mathcal N\varphi)\circ R_\Theta\bigr)}\right]({\widetilde x})
        \,d{\widetilde x}
    \right| \le
    \|\nabla\mathcal N\varphi\|_{L^2(\mathbb R^3)}^2.
\end{align}
Here, the reflection operator \(R_\Theta\) is specified by \eqref{eq:reflect}.
It follows from \eqref{eq:54} that 
\begin{align} \label{eq:25}
\frac{1}{\rho_+}\langle \varphi, \mathcal S_{\Theta,\nu}\varphi
    \rangle_{(H^{-1/2}(\partial B),H^{1/2}(\partial B))} & = \int_{\mathbb R^3}
        \left[\nabla\mathcal N\varphi\right]({\widetilde x})\cdot \overline{\nabla (\mathcal N\varphi)}({\widetilde x})\,d{\widetilde x} \notag\\
&  \frac{1/\rho_+ -  1/\rho_-}{1/\rho_+ + 1/\rho_-} \int_{\mathbb R^3}
        \left[\nabla\mathcal N\varphi\right]({\widetilde x})\cdot \overline{\nabla (\mathcal N \varphi \circ R_\Theta)}({\widetilde x})\,d{\widetilde x}.
\end{align}
In conjunction with \eqref{eq:free-layer-energy} and \eqref{eq:57} gives
\begin{align*}
    \langle\varphi,\mathcal S_{\Theta,\nu}\varphi
    \rangle_{H^{-1/2}(\partial B),H^{1/2}(\partial B)}
    &\ge
    C
    \|\nabla\mathcal N\varphi\|_{L^2(\mathbb R^3)}^2 \ge
    C\|\varphi\|_{H^{-1/2}(\partial B)}^2,
\end{align*}
The Lax--Milgram theorem therefore shows that
\(\mathcal S_{\Theta,\nu}\) is an isomorphism and that
\begin{equation}\label{eq:flat-layer-uniform-inverse}
    \sup_{\Theta\ge0}
    \|\mathcal S_{\Theta,\nu}^{-1}\|_{
        \mathcal L(H^{1/2}(\partial B),H^{-1/2}(\partial B))}
    <\infty.
\end{equation}

First, we show that \eqref{eq:flat-capacity-layer} holds for $\Theta > 0 $. Set
\(\varphi_\Theta:=\mathcal S_{\Theta,\nu}^{-1}1\) and 
define the transmission potential
\[
    W_\Theta({\widetilde x})
    :=
    \begin{cases}
        \displaystyle
        \rho_+
        \bigl(
            \mathcal N \varphi_\Theta({\widetilde x})+ \frac{1/\rho_+ -  1/\rho_-}{1/\rho_+ + 1/\rho_-} \mathcal N \varphi_\Theta(R_\Theta {\widetilde x})
        \bigr),
        & {\widetilde x}\in \left\{{\widetilde x}\in\R^3: \nu\cdot {\widetilde x} < \ell_B(\nu)+\Theta\right\}, \\[6pt]
        \displaystyle
        \frac{2}{1/{\rho_+} + 1/{\rho_-}}\,\mathcal N \varphi_\Theta({\widetilde x}),
        & {\widetilde x}\in \left\{{\widetilde x}\in\R^3: \nu\cdot {\widetilde x}>\ell_B(\nu)+\Theta\right\}.
    \end{cases}
\]
Clearly,
\(W_\Theta\in  D^{1,2}(\mathbb R^3)\) and
\begin{equation}\label{eq:flat-layer-weak-identity}
    \int_{\mathbb R^3}
        a^{\mathrm{flat}}_{\Theta,\nu}({\widetilde x})\nabla W_\Theta({\widetilde x})\cdot \overline{\nabla v}({\widetilde x})\,d{\widetilde x}
    = \int_{\partial B}\varphi_{\Theta}({\widetilde x})(\gamma_{\partial B} \overline v)({\widetilde x}) dS({\widetilde x}),
    \qquad v\in D^{1,2}(\mathbb R^3).
\end{equation}
Moreover, it can be seen that
\(W_\Theta=1\) in \(\overline B\). Building upon this and
\eqref{eq:flat-layer-weak-identity}, we readily obtain that $W_\Theta$ is the capacity potential, that is,
\begin{align*}
    \icap^{\mathrm{flat}}_{\Theta,\nu}(B)
    =
    \int_{\mathbb R^3}
        a^{\mathrm{flat}}_{\Theta,\nu}(\widetilde x)|\nabla W_\Theta(\widetilde x)|^2\,d{\widetilde x}
    = \int_{\partial B}\varphi_{\Theta}({\widetilde x})dS({\widetilde x}),
    \qquad \Theta>0,
\end{align*}
which implies \eqref{eq:flat-capacity-layer} for \(\Theta > 0\).

Second, we show that \eqref{eq:flat-capacity-layer} holds for \(\Theta = 0\). Due to the continuity of translations in \(D^{1,2}(\mathbb R^3)\), for every fixed \(\varphi\in H^{-1/2}(\partial B)\), it can be deduced from \eqref{eq:25} that 
\[
    \lim_{\Theta \rightarrow 0}\mathcal S_{\Theta,\nu}\varphi
    =
    \mathcal S_{0,\nu}\varphi.
\]
This, together with \eqref{eq:flat-layer-uniform-inverse} gives
\begin{align}\label{eq:40}
 \lim_{\Theta \rightarrow 0}\mathcal S^{-1}_{\Theta,\nu}\varphi
    =
    \mathcal S^{-1}_{0,\nu}\varphi.
\end{align}
Furthermore, since 
\begin{align*}
    \lim_{\Theta \rightarrow 0} a^{\mathrm{flat}}_{\Theta,\nu}({\widetilde y})= a^{\mathrm{flat}}_{0,\nu}({\widetilde y}) \quad \textrm{for almost every } {\widetilde y} \in \R^3,
\end{align*}
it follows from Proposition \ref{lem:weighted-capacity-continuity} that
\begin{align*}
\lim_{\Theta \rightarrow 0}\icap^{\mathrm{flat}}_{\Theta,\nu}(B) = \icap^{\mathrm{flat}}_{0,\nu}(B).
\end{align*}
Combining this with \eqref{eq:40} gives \eqref{eq:flat-capacity-layer} for $\Theta = 0$.
\end{proof}

\section{Proof of Proposition \ref{le:uniform-inverse}}
\label{app:uniform-stability}

This section is devoted to proving Proposition \ref{le:uniform-inverse}. Before proving this, we prepare two lemmas.

\begin{lemma}
Let $\eps >0$ and let the sesquilinear form \(\mathfrak b_{\eps,z}\) be specified by \eqref{eq:truncated-form}.
For every compact
$K\subset\mathcal O$, there exists $C_K>0$ such that
\begin{equation}
\label{eq:uniform-garding}
\|v\|_{H^1(B_R\backslash \overline{D_\eps})}^2
\le C_K\left(
|\mathfrak b_{\eps,z}(v,v)|+\|v\|_{L^2(B_R\backslash \overline{D_\eps})}^2
\right),
\qquad v\in V_\eps, 
\end{equation}
uniformly in $\eps$ and $z\in K$.

Furthermore, for every fixed $\eps\ge0$, $\mathfrak A_\eps(z)$ is Fredholm of index zero.
\end{lemma}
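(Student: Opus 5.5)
The approach is to write the truncated form as a coercive volume part, a compact $L^2$ perturbation, and a boundary term on $\partial B_R$. Split
\[
\mathfrak b_{\eps,z}(v,v)=\int_{B_R\setminus\overline{D_\eps}}(\rho^0)^{-1}|\nabla v|^2\,\dd x
-z^2\int_{B_R\setminus\overline{D_\eps}}(k^0)^{-1}|v|^2\,\dd x
-\frac1{\rho_-}\left\langle\Lambda_R^{\rm out}(z/c_-)\gamma_Rv,\gamma_Rv\right\rangle .
\]
The coefficients $\rho_\pm,k_\pm$ are fixed, so the first term is bounded below by $\min(\rho_+^{-1},\rho_-^{-1})\|\nabla v\|^2$. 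The second term is bounded by $C_K\|v\|_{L^2}^2$, since $K$ is bounded.

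\textbf{Exterior DtN term.} This is the only delicate piece. I would use the standard splitting $\Lambda_R^{\rm out}(\zeta)=\Lambda_R^{\rm out}(0)+\Lambda_R^{\rm rem}(\zeta)$.
\begin{itemize}
\item The static part $-\Lambda_R^{\rm out}(0)$ is nonnegative. Its spherical harmonic symbol is $-(\ell+1)/R$, so $-\langle\Lambda_R^{\rm out}(0)g,g\rangle\ge0$ and this piece only helps the lower bound.
\item The remainder $\Lambda_R^{\rm rem}(\zeta)$ has symbol of order $O(\zeta^2/\ell)$ uniformly on compact pole-free sets. It is therefore bounded from $H^{1/2}(\partial B_R)$ into $H^{1/2}(\partial B_R)$, and hence from $H^{-1/2}$ to $H^{-1/2}$ with a smoothing gain; for the estimate it suffices that it maps $H^{1/2}$ boundedly into $L^2(\partial B_R)$.
\end{itemize}
This gives $|\langle\Lambda_R^{\rm rem}\gamma_Rv,\gamma_Rv\rangle|\le C_K\|\gamma_Rv\|_{L^2(\partial B_R)}\|\gamma_Rv\|_{H^{1/2}}$. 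This in turn is at most $\delta\|v\|_{H^1}^2+C_\delta\|\gamma_Rv\|_{L^2(\partial B_R)}^2$.

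The step I expect to need the most care is making the trace bound uniform in $\eps$. Every $v\in V_\eps$ is supported away from $\partial B_R$, in the sense that the trace only sees the fixed annulus $A=B_R\setminus\overline{B_{R-2}}$, which does not meet $D_\eps$. Hence the multiplicative trace inequality $\|\gamma_Rv\|_{L^2}^2\le \delta\|v\|_{H^1(A)}^2+C_\delta\|v\|_{L^2(A)}^2$ holds with constants independent of $\eps$. Combining the three estimates and taking $\mathrm{Re}$ yields \eqref{eq:uniform-garding} with a constant depending only on $K$, $\rho_\pm$, $k_\pm$ and $R$.

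\textbf{Fredholm index zero.} Fix $\eps\ge0$. The Gårding inequality just proved shows that $\mathfrak A_\eps(z)+C_K\,\mathcal J$ is coercive, hence invertible. Here $\mathcal J$ is the operator on $V_\eps$ induced by the $L^2$ inner product, together with the compact DtN remainder if one prefers to absorb it there. By Rellich's theorem on the bounded Lipschitz set $B_R\setminus\overline{D_\eps}$, or on $B_R$ when $\eps=0$, the embedding $V_\eps\hookrightarrow L^2$ is compact, so $\mathcal J$ is compact. Thus $\mathfrak A_\eps(z)$ is an invertible operator plus a compact one, which is Fredholm of index zero.
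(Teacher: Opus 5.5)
Your proposal is correct and follows essentially the same route as the paper. Both use the spherical-harmonic symbol $\lambda_\ell=-(\ell+1)/R+O_K\bigl((\ell+1)^{-1}\bigr)$ of the exterior DtN map: your split into a nonpositive static part plus an order $-1$ remainder is the paper's observation that $\operatorname{Re}\lambda_\ell\le 0$ for large $\ell$, which the paper derives from the recurrence for $r_\ell$ while you quote it as standard. Both then apply a multiplicative trace inequality on a fixed collar disjoint from $D_\eps$ for uniformity in $\eps$, and obtain Fredholm index zero from coercivity of the form shifted by a multiple of the $L^2$ product together with compactness of the $L^2$ embedding. One wording slip: functions in $V_\eps$ are not supported away from $\partial B_R$; what you actually (and correctly) use is that the collar adjacent to $\partial B_R$ is fixed and does not meet $D_\eps$.
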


\begin{proof}
With the aid of the spherical-harmonic representation of the exterior Dirichlet-to-Neumann map, we have 
\[
\Lambda_R^{\rm out}(z){\widetilde y}_\ell^m
=
\lambda_\ell(z){\widetilde y}_\ell^m,
\qquad
\lambda_\ell(z)
=
z\frac{(h_\ell^{(1)})'(zR)}
        {h_\ell^{(1)}(zR)} .
\]
The singularity at zero is removable, with
\[
\lambda_\ell(0)=-\frac{\ell+1}R, \qquad \ell \in \mathbb Z_0.
\]
For \(\ell\geq1\), the recurrence relations
\cite[Eqs.~(10.51.1)--(10.51.2)]{DLMF} yield
\begin{align*}
\lambda_\ell(z)
=
-\frac{\ell+1}{R}
+
\frac{r_\ell(zR)}{R}, \qquad
    r_\ell(t):=\frac{t h_{\ell-1}^{(1)}(t)}
                         {h_\ell^{(1)}(t)}.
\end{align*}
These functions satisfy
\[
r_1(t)=\frac{t^2}{1-\mathrm{i}t},
\qquad
r_\ell(t)
=
\frac{t^2}{2\ell-1-r_{\ell-1}(t)},
\quad \ell\geq2.
\]
Then, an induction shows that
\[
|r_\ell(t)|
\leq
\frac{2|t|^2}{2\ell-1},
\qquad
|t|\leq\frac12,\quad \ell\geq1,
\]
where the quotients are understood by their removable continuation
at zero. Away from zero, the finite-sum representation
\cite[Eqs.~(10.49.1), (10.49.6)]{DLMF} gives \(r_\ell(zR)=O_K(\ell^{-1})\) uniformly for
\(z\in K\) with \(|z|\geq(2R)^{-1}\).
\[ 
\lambda_\ell(z)
=
-\frac{\ell+1}{R}
+
O_{K}\bigl((\ell+1)^{-1}\bigr), \qquad \textrm{uniformly on } K. 
\]
Thus \(\operatorname{Re}\lambda_\ell(z)\leq0\) for all
sufficiently large \(\ell\), uniformly on \(K\).
The remaining finitely many eigenvalues are bounded on \(K\),
since \(K\) avoids the poles.
Summing over the spherical harmonics therefore gives
\begin{align}\label{eq:Gar}
-\operatorname{Re}
\left\langle
\Lambda_R^{\mathrm{out}}(z)\varphi,\varphi
\right\rangle_{H^{-1/2}(\partial B_R), \; H^{1/2}(\partial B_R)}
\geq
-C_K\|\varphi\|_{L^2(\partial B_R)}^2,
\qquad z\in K.
\end{align}

Furthermore, since $R$ is fixed, using the multiplicative trace inequality, we have 
\[
\|\gamma_R v\|_{L^2(\partial B_R)}^2
\le
C\|v\|_{L^2(\mathcal C_R)}\|v\|_{H^1(\mathcal C_R)}.
\]
Here, the collar $\mathcal C_R$ is defined by 
\[
\mathcal C_R:=B_R\setminus\overline{B_{R-1}}.
\]
Due to the fact that \(D_\varepsilon\) is contained in \(B_{R-2}\), $\mathcal C_R$
is contained in \(B_R \backslash \overline{D_\eps}\) for every \(\varepsilon>0\).
In conjunction with Young's inequality, for every \(\eta>0\), we obtain
\begin{align}\label{eq:trace}
\|\gamma_R v\|_{L^2(\partial B_R)}^2
\le
\eta\|\nabla v\|_{L^2(\mathcal C_R)}^2
+
C_\eta\|v\|_{L^2(\mathcal C_R)}^2.
\end{align}
It follows from \eqref{eq:Gar} and \eqref{eq:trace} that 
\begin{align}
\operatorname{Re}\mathfrak b_{\varepsilon,z}(v,v)
&\geq \min(\rho^{-1}_+,\rho^{-1}_-)\|\nabla v\|_{L^2(B_R \backslash \overline{D_\eps})}^2
 - M_K\|v\|_{L^2(B_R \backslash \overline{D_\eps})}^2
 - C_K\|\gamma_Rv\|_{L^2(\partial B_R)}^2                                      \notag\\
&\geq
 \bigl(\min(\rho^{-1}_+,\rho^{-1}_-) - C_K\eta\bigr)
 \|\nabla v\|_{L^2(B_R \backslash \overline{D_\eps})}^2
 -
 \bigl(M_K + C_KC_\eta\bigr)
 \|v\|_{L^2(B_R \backslash \overline{D_\eps})}^2. \label{eq:36}
\end{align}
Here,
\[
 M_K:=\sup_{z\in K}|z|^2\|k_0^{-1}\|_{L^\infty(B_R)}.
\]
Choosing \(\eta>0\) so that 
\[
C_K\eta\leq \frac{\min(\rho^{-1}_+,\rho^{-1}_-)}2,
\]
we find
\begin{align}\label{eq:37}
 \operatorname{Re}\mathfrak b_{\varepsilon,z}(v,v)
 \geq
 \frac{\min(\rho^{-1}_+,\rho^{-1}_-)}{2}\|\nabla v\|_{L^2(B_R \backslash \overline{D_\eps})}^2
 -
 C'_K\|v\|_{L^2(B_R \backslash \overline{D_\eps})}^2 ,
\end{align}
uniformly in \(\varepsilon\) and \(z\in K\). Consequently, combining \eqref{eq:36} and \eqref{eq:37} gives \eqref{eq:uniform-garding}.

It remains to prove the Fredholm property. Fix
\(\varepsilon\geq0\) and \(z\in U\). By the preceding estimate, one can
choose \(\mu>0\) such that the form
\[
 \mathfrak c_{\varepsilon,z}(u,v)
 :=
 \mathfrak b_{\varepsilon,z}(u,v)
 +
 \mu\int_{B_R \backslash \overline{D_\eps}}u(x)\overline v(x)\,\mathrm dx
\]
is coercive on \(V_\varepsilon\). Hence, by the Lax--Milgram theorem,
the induced operator
\( \mathfrak C_\varepsilon(z):V_\varepsilon\longrightarrow V_\varepsilon\) is an isomorphism. Since  
\begin{align}\label{eq:35}
 \mathfrak A_\eps(z)(z)
 =
 \mathfrak C_\varepsilon(z)-\mu\mathfrak J_\varepsilon ,
\end{align}
where \(\mathfrak J_\varepsilon:V_\varepsilon\to V_\varepsilon\) is
defined by
\[
 \bigl\langle\mathfrak J_\varepsilon u,v\bigr\rangle
 :=
 \int_{B_R\backslash \overline D_\eps}u(x)\overline v(x)\,\mathrm dx .
\]
It follows from the zero-extension map
\(\mathcal E_\varepsilon:V_\varepsilon\to H^1(B_R)\) and the compact embedding from
\(H^1(B_R)\) into \(L^2(B_R)\) that 
\(\mathfrak J_\varepsilon\) is compact. Combining this with the decomposition \eqref{eq:35} and the invertibility of \(\mathfrak C_\eps\) gives that \(\mathfrak A_\eps(z)\) is Fredholm of
index zero.
\end{proof}

\begin{lemma}
\label{lem:mosco-hole}
Let $\eps >0$. For each \(v\in H^1(B_R)\), there exists $w_\eps \in V_\eps$ such that 
\begin{align}\label{eq:19}
    \lim_{\eps \rightarrow 0} \mathcal E_\eps w_\eps = v.
\end{align}
Here \(\mathcal E_\eps\) denotes the zero extension through \(D_\eps\).
\end{lemma}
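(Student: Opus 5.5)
Given \(v\in H^1(B_R)\), we will build \(w_\eps\) by cutting \(v\) off near the shrinking hole with a capacitary cutoff at scale \(\eps\). Concretely, we fix \(\psi\in C_c^\infty(\R^3)\) with \(0\le\psi\le1\), \(\psi=1\) on a neighbourhood of \(\overline B\), and \(\operatorname{supp}\psi\subset B_{c}(0)\) for some fixed \(c>0\). Then we set \(\psi_\eps(x):=\psi\bigl((x-x_\eps)/\eps\bigr)\) and
\[
w_\eps:=(1-\psi_\eps)\,v\big|_{B_R\setminus\overline{D_\eps}} .
\]
Since \(\psi_\eps=1\) near \(\overline{D_\eps}\), we have \(\gamma_{\partial D_\eps}w_\eps=0\), so \(w_\eps\in V_\eps\). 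Moreover \(\mathcal E_\eps w_\eps=(1-\psi_\eps)v\) on all of \(B_R\). For small \(\eps\), the support of \(\psi_\eps\) lies in \(B_{R-1}\) because \(D_\eps\subset B_{R-2}\). It therefore remains to show \(\psi_\eps v\to0\) in \(H^1(B_R)\).

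For the \(L^2\) part and the term \(\psi_\eps\nabla v\), we use dominated convergence: \(|\psi_\eps v|\le|v|\) and \(|\psi_\eps\nabla v|\le|\nabla v|\), and both products are supported in \(B_{c\eps}(x_\eps)\), whose measure tends to zero. Absolute continuity of the integrals of \(|v|^2\) and \(|\nabla v|^2\) then gives \(\|\psi_\eps v\|_{L^2}+\|\psi_\eps\nabla v\|_{L^2}\to0\). This uses only that the balls shrink in measure, so it is uniform in the location \(x_\eps\).

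The main obstacle is the term \(v\nabla\psi_\eps\), where \(|\nabla\psi_\eps|\le C\eps^{-1}\) on \(B_{c\eps}(x_\eps)\). We will handle it with Hölder's inequality and the Sobolev embedding \(H^1(B_R)\hookrightarrow L^6(B_R)\) in three dimensions:
\[
\|v\nabla\psi_\eps\|_{L^2}\le \|v\|_{L^6(B_{c\eps}(x_\eps))}\,\|\nabla\psi_\eps\|_{L^3(\R^3)}
= \|v\|_{L^6(B_{c\eps}(x_\eps))}\,\|\nabla\psi\|_{L^3(\R^3)} .
\]
Here we use the scale invariance \(\|\nabla\psi_\eps\|_{L^3}=\|\nabla\psi\|_{L^3}\), which reflects the fact that points have zero capacity in \(\R^3\). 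Since \(|v|^6\in L^1(B_R)\) and \(|B_{c\eps}(x_\eps)|\to0\), absolute continuity gives \(\|v\|_{L^6(B_{c\eps}(x_\eps))}\to0\), again uniformly in \(x_\eps\).

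Combining the three estimates yields \(\|\mathcal E_\eps w_\eps-v\|_{H^1(B_R)}=\|\psi_\eps v\|_{H^1(B_R)}\to0\), which is \eqref{eq:19}. If weak convergence is all that is needed in the Mosco-type argument, this strong convergence is more than enough. Moreover, the argument makes no use of \(\operatorname{dist}(D_\eps,\Gamma)\), because the background coefficients do not enter.
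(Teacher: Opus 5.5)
Your argument is correct. It differs from the paper's in that you construct the recovery sequence explicitly instead of importing it.

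The paper extends $v$ to $\widetilde v\in H_0^1(B_{R'})$ and notes that the capacity of $\overline{D_\eps}$ tends to zero, since $\overline{D_\eps}$ lies in a ball of radius $C\eps$. It then applies an abstract result of Daners (Proposition~5.3.3, which yields the strong approximation property of Assumption~5.2.2 there). This gives some $\widetilde w_\eps\in H_0^1(B_{R'}\setminus\overline{D_\eps})$ whose zero extensions converge to $\widetilde v$, with no formula for $w_\eps$.

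You instead take $w_\eps=(1-\psi_\eps)v$ and verify the convergence by hand. The only nontrivial input is the H\"older splitting $L^6\times L^3$ combined with the three-dimensional scale invariance $\|\nabla\psi_\eps\|_{L^3(\R^3)}=\|\nabla\psi\|_{L^3(\R^3)}$. This is in effect a concrete proof of the ``vanishing capacity implies approximability'' principle the paper cites.

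What your route buys:
\begin{itemize}
\item It is self-contained.
\item It needs no extension to $B_{R'}$, because your cutoff is supported in $B_{R-1}$ for small $\eps$, so nothing happens near $\partial B_R$.
\item It makes the uniformity with respect to the moving centre $x_\eps$ transparent, via absolute continuity of the integrals of $|v|^6$, $|v|^2$ and $|\nabla v|^2$. The paper leaves this point implicit.
\end{itemize}

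What the paper's route buys: it is shorter and works in any dimension. In two dimensions a fixed-profile cutoff would fail, since for $v\equiv1$ one has $\|\nabla\psi_\eps\|_{L^2}=\|\nabla\psi\|_{L^2}\not\to0$, and logarithmic cutoffs would be needed. The capacity argument goes through unchanged there.

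Both approaches give strong convergence in $H^1(B_R)$, which is what the proof of Proposition~\ref{le:uniform-inverse} actually uses.
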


\begin{proof}
Fix \(R'>R\). Given each \(v\in H^1(B_R)\), we extend it to a function
\(\widetilde v\in H_0^1(B_{R'})\). Since \(\overline{D_\varepsilon}\) is contained in a ball
of radius \(C\varepsilon\), its capacity in the sense of
\cite[Section~5.3]{Daners_08} tends to zero. Applying \cite[Proposition~5.3.3]{Daners_08} to
$B_{R'}\setminus\overline{D_\varepsilon}$ and $B_{R'}$,
we obtain the strong approximation property stated in
\cite[Assumption~5.2.2]{Daners_08}.
Hence, there exist
\(\widetilde w_\varepsilon\in
H_0^1(B_{R'}\setminus\overline{D_\varepsilon})\)
whose zero extensions \(\widehat w_\varepsilon\) satisfy that
\begin{align}
   \lim_{n\rightarrow \infty} \widehat w_\varepsilon = \widetilde v
    \qquad \textrm{in }H^1(B_{R'}). \notag
\end{align}
Setting
\(w_\varepsilon
:=\widetilde w_\varepsilon|_{B_R\setminus\overline{D_\varepsilon}}\),
we have \(w_\varepsilon\in V_\varepsilon\). Since
\[
 \|\mathcal E_\varepsilon w_\varepsilon-v\|_{H^1(B_R)}
    \leq 
    \|\widehat w_\varepsilon-\widetilde v\|_{H^1(B_{R'})},
\]
we readily obtain \eqref{eq:19}. The proof of this lemma is thus completed.
\end{proof}

We are ready to give the proof of Proposition \ref{le:uniform-inverse}.

\begin{proof}[Proof of Proposition \ref{le:uniform-inverse}]

Suppose, to the contrary, that the conclusion fails. Then, for every
\(\delta>0\) and \(M>0\), there exist
\(
0<\varepsilon(\delta)<\delta
\) and \(z\in K\)
such that either \(\mathfrak A_{\eps_\delta}(z)\) is not invertible,
or it is invertible and
\[
\left\|\mathfrak A_{{\eps_\delta}}(z)^{-1}\right\|_
{\mathcal L(V_\varepsilon,V_\varepsilon)}
>M.
\]
Taking \(\delta=n^{-1}\) and \(M=n\), we obtain sequences
\[
0<\varepsilon_n<n^{-1},
\qquad z_n\in K,
\]
for which one of these two alternatives holds.

We construct \(v_n\in V_{\varepsilon_n}\) as follows.
If \(\mathfrak A_{\varepsilon_n}(z_n)\) is not invertible, its
Fredholm property and index zero imply that its kernel is nontrivial.
We therefore choose
\[
v_n\in\ker\mathfrak A_{\varepsilon_n}(z_n),
\qquad
\|v_n\|_{V_{\varepsilon_n}}=1.
\]
Otherwise,
\[
\left\|\mathfrak A_{\varepsilon_n}(z_n)^{-1}\right\|_
{\mathcal L(V_{\varepsilon_n},V_{\varepsilon_n})}
>n.
\]
By the definition of the operator norm, there exists
\(f_n\in V_{\varepsilon_n}\) such that
\[
\|f_n\|_{V_{\varepsilon_n}}=1,
\qquad
\left\|
\mathfrak A_{\varepsilon_n}(z_n)^{-1}f_n
\right\|_{V_{\varepsilon_n}}>n.
\]
Set
\[
v_n
:=
\frac{\mathfrak A_{\varepsilon_n}(z_n)^{-1}f_n}
{\left\|
\mathfrak A_{\varepsilon_n}(z_n)^{-1}f_n
\right\|_{V_{\varepsilon_n}}}.
\]
Therefore,
\begin{align*}
\|v_n\|_{V_{\varepsilon_n}}=1,
\qquad
\lim_{n\longrightarrow 0}
\left\|
\mathfrak A_{\varepsilon_n}(z_n)v_n
\right\|_{V_{\varepsilon_n}} =0.
\end{align*}

Extend \(v_n\) by zero through \(D_{\eps_n}\). Up to a subsequence, we have
\[
v_n\rightharpoonup v\quad\text{weakly in }H^1(B_R),
\qquad
\lim_{n \rightarrow \infty} v_n = v\quad\text{in }L^2(B_R), \qquad \textrm{and } \lim_{n \rightarrow \infty} z_n = z_*\in K. 
\]
Let \(\psi\in V_0\). By Lemma \ref{lem:mosco-hole}, there exist
\(\psi_n\in V_{\eps_n}\) with
\[
\lim_{n \rightarrow \infty}\psi_n = \psi\quad\text{in }H^1(B_R).
\]
Then, it can be seen that 
\begin{align*}
\lim_{n\rightarrow \infty}\int_{B_R\setminus\overline{D_\eps}}
\bigl((\rho^0)^{-1}\nabla v_n\cdot\nabla\overline \psi_n - z^2 (k^{(0)})^{-1}\,v_n\overline \psi_n\bigr)(x)\,\dd x \\
= \int_{B_R\setminus\overline{D_\eps}}
\bigl((\rho^0)^{-1}\nabla v\cdot\nabla\overline \psi - z^2 (k^{(0)})^{-1}\,u\overline \psi \bigr)(x)\,\dd x,
\end{align*}
and that
\[
\gamma_Rv_n\rightharpoonup\gamma_Rv
\quad\text{weakly in }H^{1/2}(\partial B_R),
\qquad
\lim_{n \rightarrow \infty} \gamma_R\psi_n = \gamma_R\psi
\quad\text{in }H^{1/2}(\partial B_R),
\]
Moreover, by the holomorphy of the exterior Dirichlet-to-Neumann map, we have
\[
\lim_{n \rightarrow \infty}\Lambda_R^{\rm out}(z_n)	= \Lambda_R^{\rm out}(z_*)
\quad\text{in }\mathcal L(H^{1/2}(\partial B_R),H^{-1/2}(\partial B_R)).
\]
Based on the above discussions, we arrive at 
\[
\mathfrak b_{0,z_*}(v,\psi)=0
\qquad\text{for every }\psi\in H^1(B_R).
\]
Combining this with the hypothesis on \(K\) gives \(v=0\).
Therefore 
\begin{align*}
\lim_{n\rightarrow \infty} v_n= 0 \qquad \textrm{in } L^2(B_R). 
\end{align*}
Finally, \eqref{eq:uniform-garding} gives
\[
1=\|v_n\|_{H^1(B_R\backslash \overline{D_\eps})}^2
\le C_K\left(
|\langle\mathfrak A_{\eps_n}(z_n)v_n,v_n\rangle_{V_{\eps_n}}|
+\|v_n\|_{L^2(B_R)}^2
\right)\longrightarrow0,
\]
leading to a contradiction.
\end{proof}

\section{Proof of Proposition \ref{lem:weighted-capacity-continuity}}
\label{app:D2}

\begin{proof}[Proof of Proposition \ref{lem:weighted-capacity-continuity}]

Define \(\mathfrak B_n,\mathfrak B: \mathcal A_B \to D^{1,2}(\R^3)\) by
\[
    \langle \mathfrak B_nu,v\rangle_{D^{1,2}(\R^3)}
    :=\int_{\mathbb R^3}b_n({\widetilde y})\nabla u({\widetilde y})\cdot\nabla v({\widetilde y})\,d{\widetilde y},
    \qquad
    \langle \mathfrak B u,v\rangle_{D^{1,2}(\R^3)}
    :=\int_{\mathbb R^3} b({\widetilde y})\nabla u({\widetilde y})\cdot\nabla v({\widetilde y})\,d{\widetilde y}.
\]
With the aid of \eqref{eq:15} and \eqref{eq:16},
a straightforward calculation gives
\begin{align*}
&\|\mathfrak B_nv_n- \mathfrak B_n v\|_{D^{1,2}(\R^3)} \le a^* \|v_n - v\|_{D^{1,2}(\R^3)}, \qquad \forall v_n,\;v\in \mathcal A_B.\\
& \langle\mathfrak B_n u - \mathfrak B_n v, u - v\rangle_{D^{1,2}(\R^3)} \ge a_* \|v_n - v\|^2_{D^{1,2}(\R^3)}, \qquad \forall v_n,\;v\in D^{1,2}(\R^3).
\end{align*}
Thus, by \cite[Chapter~4, Theorem~3.4 and Remark~3.5]{R_87}, there exist minimizers \(u_n\in \mathcal A_B\) and \(u \in \mathcal A_B\) achieving \(m(b_n)\) and \(m(b)\), respectively.
solve
\begin{align*}
    \langle \mathfrak B_n u_n,v-u_n\rangle_{D^{1,2}(\R^3)} \geq 0,
    \qquad
    \langle \mathfrak B u,v-u\rangle_{D^{1,2}(\R^3)} \geq 0,
    \qquad \forall v\in \mathcal A_B.
\end{align*}

Moreover, whenever $\lim_{n\rightarrow \infty} v_n = v$, by dominated convergence theorem,
\[
   \lim_{n\rightarrow \infty} \|\mathfrak B_nv_n- \mathfrak B v\|_{D^{1,2}(\R^3)}
    \leq \lim_{n\rightarrow \infty}\left[a^*\|v_n-v\|_{D^{1,2}(\R^3)}
       +\|(b_n-b)\nabla v\|_{L^2(\mathbb R^3)}\right]
    = 0.
\]
We note that the admissible set $\mathcal A_B$ is fixed.
\cite[Chapter~4, Theorem~4.1]{R_87} yields
\begin{align*}
\lim_{n\rightarrow \infty} u_n = u.
\end{align*}
Consequently, \(\lim_{n\rightarrow \infty}\mathfrak B_n u_n =  \mathfrak B u\), and hence \eqref{eq:12} holds.
 \end{proof}

\end{appendices}

\section*{Acknowledgment}

The work of H. Diao is supported by National Natural Science Foundation of China  (Grant No. 12371422),  and the Fundamental Research Funds for the Central Universities, JLU. The work of M. Sini is supported by 2025 National Foreign Experts Program (Grant No. D20250157). The work of L. Li and M. Sini is supported by the Austrian Science Fund (FWF) grant P: 36942.

\end{document}